\providecommand{\U}[1]{\protect\rule{.1in}{.1in}}
\numberwithin{equation}{section}
\newtheorem{theorem}{Theorem}[section]
\newtheorem{lemma}[theorem]{Lemma}
\newtheorem{proposition}[theorem]{Proposition}
\newtheorem{remark}[theorem]{Remark}
\newtheorem{example}[theorem]{Example}
\newtheorem{definition}[theorem]{Definition}
\def\<{\langle}
\def\>{\rangle}
\def\E{\mathbb{E}}
\def\P{\mathbb{P}}
\def\R{\mathbb{R}}
\def\T{\mathbb{T}}
\def\Z{\mathbb{Z}}
\begin{document}
	
	\title{Finite time mixing and enhanced dissipation for 2D Navier-Stokes equations by Ornstein--Uhlenbeck flow}
	
	\author{Chang Liu\footnote{Email: liuchang2021@amss.ac.cn. School of Mathematical Sciences, University of Chinese Academy of Sciences, Beijing 100049, China and Academy of Mathematics and Systems Science, Chinese Academy of Sciences, Beijing 100190, China}
		\quad Dejun Luo\footnote{Email: luodj@amss.ac.cn. Key Laboratory of RCSDS, Academy of Mathematics and Systems Science, Chinese Academy of Sciences, Beijing 100190, China and School of Mathematical Sciences, University of Chinese Academy of Sciences, Beijing 100049, China}}
	\maketitle
	
	\vspace{-20pt}
	
	\begin{abstract}
		We consider the vorticity form of 2D Navier-Stokes equations perturbed by an Ornstein--Uhlenbeck flow of transport type. Contrary to previous works where the random perturbation was interpreted as Stratonovich transport noise, here we understand the equation in a pathwise manner and show the properties of mixing and enhanced dissipation for suitable choice of the flow.
	\end{abstract}
	
	\textbf{Keywords:} 2D Navier-Stokes equation, Ornstein--Uhlenbeck process, mixing, dissipation enhancement

	\section{Introduction}
	Let $\T^2:=\R^2/ \Z^2$ be the two-dimensional (2D) torus; we consider the vorticity form of 2D Navier-Stokes equations on $\T^2$, perturbed by a smooth transport term:
	\begin{equation}\label{SPDE}
		\left\{
		\begin{aligned}
			&\partial_t \xi+u\cdot\nabla\xi+\bm{b}\cdot\nabla \xi=\kappa \Delta \xi, \\
			&u= K\ast \xi, \quad \xi|_{t=0}=\xi_0,
		\end{aligned}
		\right.
	\end{equation}
	where $\xi$ and $u$ are the fluid vorticity and velocity field, $K$ being the Biot-Savart kernel on $\T^2$:
	$$K\ast \xi:=-\nabla^\perp (-\Delta)^{-1}\xi,$$
	with $\nabla^\perp= (\partial_2, -\partial_1),\, \partial_i= \frac{\partial}{\partial x_i}$. $\kappa > 0$ is a fixed small number representing molecular diffusivity, and $\bm{b}:[0,\infty)\times \T^2\to \R^2$ is a random time-dependent and divergence free vector field, continuous in time and smooth in space variables, standing for the small-scale parts of fluid velocity, thus the term $\bm{b}\cdot\nabla \xi$ models the effects of fluid small scales on the larger component $\xi$. It is well known that for $L^2$-initial condition $\xi_0$, $\P$-a.s., equation \eqref{SPDE} admits a unique weak solution satisfying the usual energy estimate. Our purpose is to study, under suitable conditions on $\bm{b}$, the properties of mixing and dissipation enhancement for the system \eqref{SPDE}.
	
	The above model can be heuristically derived from the deterministic 2D Navier-Stokes equation by separating the fluid into large-scale components and small-scale ones, and modeling the corresponding small-scale velocity by a random field $\bm{b}$, see for instance \cite[Section 1.2]{FL21} for derivations in the 3D case and also \cite[Section 1.2]{Luo21} for similar discussions on 2D Boussinesq systems. In these papers, the small-scale perturbations are interpreted as Stratonovich multiplicative noises of transport type, delta-correlated in time and colored in space, and thus the first equation in \eqref{SPDE} is understood as a stochastic partial differential equation (SPDE); we refer to the recent works \cite{DebPapp, FP21, FP22, Holm} for more rigorous derivations of such stochastic fluid dynamics models. Indeed, the investigations of regularizing effects of multiplicative transport noise on various models began much earlier, see e.g. \cite{DFV14, FGP10, FGP11}. More recently, inspired by the scaling limit method introduced by Galeati \cite{Gal20}, stochastic fluid equations with transport noise have been studied intensively, and it is by now well understood that transport noise produces eddy dissipation/viscosity under certain rescaling of spatial covariance, see for instance \cite{CL23, FGL21a, FGL22, FlaLuo23, FLL23, FLuongo22, Luo21, LT23}. Moreover, the larger the noise intensity, the stronger the additional viscous term in the limit equations; the extra strong viscosity can be used to suppress possible blow-up of various deterministic equations, yielding long-term (even global) existence of strong solutions with large probability, cf. \cite{Agresti22, FGL21b, FHLN22, FL21, Luo23}. In a sense, the above results can be regarded as partial verifications of Boussinesq's eddy viscosity hypothesis \cite{Bous1877}, which is one of the basis for large eddy simulation (see \cite{Berselli}).
	
	However, noises with delta-correlation in time are just idealized approximations of real objects, and it is worthy of considering more practical perturbations, see \cite[Section 4]{MK99} for related discussions. As an attempt in this respect, Flandoli and Russo \cite{FR23} showed that the dissipation properties of a stochastic transport term of fractional Brownian motion with Hurst parameter $H>1/2$ are weaker than standard Brownian motion. In a slightly earlier work \cite{Pappalettera}, Pappalettera studied the mixing and dissipation enhancement properties of Ornstein-Uhlenbeck flows for passive scalar on $d$-dimensional torus $\T^d$:
	\begin{equation}\label{pappa-eq-1}
		\partial_t h+\bm{b}\cdot \nabla h= \kappa\Delta h, \quad h|_{t=0}= h_0\in L^2( \T^d).
	\end{equation}
	The time-dependent vector field $\bm{b}$ takes the form
	$$\bm{b}(t,x)= \sum_{j\in J} \bm{b}_j(x)\, \eta^{\alpha,j}(t),$$
	where $J$ is a finite index set, $\{\bm{b}_j\}_{j\in J}$ are divergence free vector fields on $\T^d$ and $\{\eta^{\alpha,j} \}_{j\in J}$ are independent real Ornstein-Uhlenbeck processes with covariance ${\rm Cov}(\eta^{\alpha,j}(t), \eta^{\alpha,j}(s))= \frac\alpha 2 \exp(-\alpha|t-s|)$, $\alpha>0$ being a parameter. As $\alpha\to \infty$, the covariance $\frac\alpha 2 \exp(-\alpha|t-s|)$ converges in distribution to the Dirac delta function, and thus $\eta^{\alpha,j} $ can be seen as approximations of the white noise. Assuming suitable conditions on the spatial properties of the family $\{\bm{b}_j\}_{j\in J}$, it is shown in \cite{Pappalettera} that the solution $h$ is close, in negative Sobolev norms, to the solution of the deterministic equation
	\begin{equation}\label{pappa-eq-2}
		\partial_t \bar h= (\kappa \Delta + \mathcal L) \bar h, \quad\bar h_0 =h_0,
	\end{equation}
	where the second order differential operator $\mathcal L \bar h= \sum_j \bm{b}_j\cdot \nabla (\bm{b}_j\cdot \nabla \bar h)$ stands for the enhanced dissipation. As mentioned above, the small-scale perturbations are understood in previous works (e.g. \cite{FGL21a, FGL22, Gal20}) as Stratonovich transport noise, and thus the additional operator $\mathcal L$ arises naturally as Stratonovich-It\^o corrector. Here, however, one has to compute the iterated integral of Ornstein-Uhlenbeck processes and to borrow some ideas from the proof of Wong-Zakai type results; see  \cite[Section 3]{FP22} for related computations. There are also many other advanced and very sophisticated works on mixing and dissipation enhancement properties, using different methods from ergodic theory, see e.g. \cite{BBPS21, BBPS22, GY21}.
	
	Motivated by \cite{Pappalettera}, we aim at studying the properties of mixing and enhanced dissipation of Ornstein-Uhlenbeck flow $\bm{b}$ for 2D Navier-Stokes equations \eqref{SPDE} in vorticity form. To overcome difficulties arising from the nonlinearity, we follow \cite{FGL21a, FGL22, Gal20} and assume that the time-dependent vector field $\bm{b}$ takes the more precise form
	$$ \bm{b}(t,x)=2\sqrt{\nu}\sum_{k\in \Z_0^2} \theta_k\, \sigma_k(x)\, \eta^{\alpha,k}(t),  $$
	where $\nu>0$ is the intensity of perturbation, $\Z_0^2=\Z^2 \backslash \{ 0 \}$ is the set of nonzero lattice points, and $\theta=\{\theta_k \}_{k\in \Z_0^2} \in \ell^2(\Z^2_0)$, the latter being the space of square summable sequences. We always assume that $\theta$ is a radial function of $k\in \Z_0^2$ with only finitely many nonzero components, and $\|\theta\|_{\ell^2}=(\sum_{k\in \Z_0^2} {\theta}_k^2) ^{1/2}=1$. The vector fields $\sigma_k(x) = \frac{k^\perp}{|k|} e^{2\pi i k\cdot x}$, where $k^\perp=(k_2,-k_1)$ and $k\cdot x= k_1x_1+ k_2x_2$, constitute a CONS of the space $L^2(\T^2,\R^2)$ of divergence free vector fields on $\T^2$ with zero mean, while $\eta^{\alpha,k}$ are independent Ornstein-Uhlenbeck processes as above.  Thanks to the exact choice of $\bm{b}$, the additional operator $\mathcal L$ takes the much simpler form $\nu \Delta $ (see \eqref{citation 1} below for related computations), and thus our purpose is to show that the solution $\xi$ of \eqref{SPDE} is close to that of the deterministic 2D Navier-Stokes equation with extra viscosity:
	\begin{equation}\label{PDE}
		\left\{ \aligned
		& \partial_t \bar\xi+ \bar u\cdot\nabla \bar\xi =(\kappa+\nu) \Delta \bar\xi, \\
		& \bar u= K\ast \bar\xi, \quad \bar\xi|_{t=0} = \xi_0.
		\endaligned
		\right.
	\end{equation}
Note that $\bar\xi$ has a fast exponential decay in $L^2$-norm (and also in negative Sobolev norms) for large $\nu$ which comes from the intensity of noise.
	
	To state more exactly our main results, we need some notation. For $s\in \R$, let $H^s=H^s(\T^2)$ be the usual Sobolev space on $\T^2$ endowed with the norm $\|\cdot \|_{H^s}$; we will write $H^0$ as $L^2$ and $\|\cdot \|_{H^0}$ as $\|\cdot \|_{L^2}$. Unless mentioned explicitly, we will use the same notation for spaces of functions and vector fields on $\T^2$. Since the equations \eqref{SPDE} and \eqref{PDE} preserve the spatial average of solutions, we shall assume that the spaces $H^s$ consist of functions of zero average.  We write $\|\theta\|_{\ell^{\infty}}$ for the supremum norm of $\theta\in \ell^2(\Z^2_0)$. In the sequel, the notation $a\lesssim b$ means that $a\le C b$ for some constant $C>0$; if we want to emphasize the dependence of $C$ on some parameters $\gamma,p$, then we write $a\lesssim_{\gamma,p} b$.

Here is the first main result of our work; it gives us a quantitative estimate on the distance, in terms of negative Sobolev norms, between the solutions $\xi$ and $\bar\xi$. Since $\bar\xi$ has a much faster decay in such norms, we can regard the result as a mixing property of the Ornstein-Uhlenbeck flow $\bm{b}$, valid on finite time intervals.
	
	\begin{theorem}\label{thm1}
		Let $\xi_0\in L^2(\T^2)$ and $\xi,\, \bar{\xi}$ be the unique solutions of \eqref{SPDE} and \eqref{PDE} respectively. Then for any $\gamma \in (0,\frac{1}{3})$, $\vartheta>0$ and $T \geq 1$, there exist $\zeta \in (0,1)$ and $\epsilon>0$ such that for $\alpha$ sufficiently large, it holds
		\begin{equation}\label{thm1.1}
			\E \big[\|\xi-\bar{\xi}\|_{C([0,T],H^{-\vartheta})}\big] \leq C_1  \|\xi_0\|_{L^2}  \exp\big(C_2 \|\xi_0\|^2_{L^2} \big) \big(\nu^{1+\frac{\gamma}{2}}\alpha^{-\epsilon}  +\nu^{\frac{1}{2}}\|\theta\|_{\ell^{\infty}}\big)^{\zeta} ,
		\end{equation}
		where $C_1>0$ is a constant depending on $\kappa, \nu, \zeta, \gamma, T$ and $C_2>0$ only depends on $\kappa,\nu, T$.
	\end{theorem}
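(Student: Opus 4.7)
My plan is to follow the scaling-limit strategy of \cite{Gal20, FGL21a, FGL22}, adapted to the Ornstein--Uhlenbeck driver as in \cite{Pappalettera}, and to close a Gr\"onwall-type inequality in negative Sobolev norms in order to accommodate the Navier--Stokes nonlinearity. First I would record the uniform \emph{a priori} bounds: since $\bm b$ is divergence free, testing \eqref{SPDE} pathwise against $\xi$ gives $\|\xi(t)\|_{L^2}\le\|\xi_0\|_{L^2}$ and $\kappa\int_0^T\|\nabla\xi\|_{L^2}^2\,ds\le\tfrac12\|\xi_0\|_{L^2}^2$, and the same for \eqref{PDE} with $\kappa$ replaced by $\kappa+\nu$; combined with the 2D Ladyzhenskaya inequality these produce the exponential factor $\exp(C_2\|\xi_0\|_{L^2}^2)$ that eventually appears in \eqref{thm1.1}. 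Setting $\eta:=\xi-\bar\xi$, splitting the bilinearity as $u\cdot\nabla\xi-\bar u\cdot\nabla\bar\xi=u\cdot\nabla\eta+(u-\bar u)\cdot\nabla\bar\xi$, and using the mild formulation of $\eta$ against $e^{t\kappa\Delta}$, I can measure $\eta$ in $H^{-\vartheta}$ while trading the derivative in the transport terms for a semigroup smoothing factor of order $(t-s)^{-1/2}$.

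The core step is to show that the accumulated drift $\int_0^t\bm b(s)\cdot\nabla\xi(s)\,ds$ reproduces, up to small errors, the enhanced dissipation $\nu\int_0^t\Delta\xi(s)\,ds$ required to match \eqref{PDE}. Using the defining SDE $d\eta^{\alpha,k}=-\alpha\eta^{\alpha,k}\,dt+\alpha\,dW^k$, a direct integration by parts yields
\[
\int_0^t\eta^{\alpha,k}(s)\,ds=W^k(t)-\alpha^{-1}\big(\eta^{\alpha,k}(t)-\eta^{\alpha,k}(0)\big),
\]
so that at the integrated level the OU driver is Brownian motion up to an $O(\alpha^{-1/2})$ boundary correction. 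Substituting \eqref{SPDE} into this integral and iterating once produces a stochastic iterated integral whose diagonal part equals precisely $\nu\int_0^t\Delta\xi(s)\,ds$ by the spectral identity for the radial family $\{\theta_k\sigma_k\}$ announced just after \eqref{PDE}; the off-diagonal part is bounded in $L^2(\Omega)$ by $\nu^{1/2}\|\theta\|_{\ell^\infty}$ via $\|\theta\|_{\ell^2}=1$ and the standard covariance estimate ${\rm Cov}(\eta^{\alpha,k}(t),\eta^{\alpha,j}(s))=\tfrac{\alpha}{2}\delta_{kj}e^{-\alpha|t-s|}$, while the boundary and heat-smoothing remainders arising in the OU--Brownian comparison contribute the $\nu^{1+\gamma/2}\alpha^{-\epsilon}$ piece for some $\epsilon=\epsilon(\gamma)>0$; the exponent $\gamma$ enters when trading a fractional spatial derivative on $\xi$ against the OU rate $\alpha^{-1/2}$.

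Plugging this decomposition back into the mild equation for $\eta$ gives an inequality of the schematic form
\[
\E\|\eta(t)\|_{H^{-\vartheta}}^{2}\lesssim\int_0^t\Phi(s)\,\E\|\eta(s)\|_{H^{-\vartheta}}^{2}\,ds+R(\alpha,\theta)^2,
\]
where $R$ is comparable to the bound in \eqref{thm1.1} and $\Phi\in L^1(0,T)$ comes from the energy estimates of the first paragraph; Gr\"onwall's lemma then closes the estimate in squared norm. The exponent $\zeta\in(0,1)$ appears after interpolating this estimate with the trivial bound $\|\eta\|_{C_tL^2}\lesssim\|\xi_0\|_{L^2}$, which is also what allows me to upgrade a fixed-time $L^2(\Omega)$-estimate to the $\E\sup_{t\le T}$-estimate in \eqref{thm1.1} via a Kolmogorov continuity argument. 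I expect the principal obstacle to be the quadratic term $(u-\bar u)\cdot\nabla\bar\xi$: since $u-\bar u$ carries only the regularity of $\eta$, controlling its $H^{-\vartheta}$-contribution demands a half derivative of smoothing from the heat semigroup combined with the bound $\bar\xi\in L^2_tH^1$ coming from the enhanced viscosity, and it is this manoeuvre that forces $C_1$ to depend on $\kappa,\nu,\gamma,T$ in the stated way. A secondary difficulty is the pathwise interpretation of the OU integrals, which I would handle by first proving the estimate for smooth approximations $\xi^n$ and passing to the limit using the uniform energy bound.
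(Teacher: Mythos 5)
Your plan follows the same overall route as the paper's proof: pathwise energy estimates producing the factor $\exp(C_2\|\xi_0\|^2_{L^2})$, extraction of the corrector $\nu\Delta$ from the iterated integral of the Ornstein--Uhlenbeck driver, a mild-formulation Gr\"onwall argument in negative Sobolev norms to absorb the nonlinearity, and an interpolation that produces the exponent $\zeta$. The identity $\int_0^t\eta^{\alpha,k}\,ds=W^k_t-\alpha^{-1}(\eta^{\alpha,k}(t)-\eta^{\alpha,k}(0))$, the splitting of the iterated integral into a diagonal part equal to $\nu\int\Delta\xi$ via the spectral identity \eqref{citation 1}, and a remainder of size $\nu^{1/2}\|\theta\|_{\ell^{\infty}}+\nu^{1+\gamma/2}\alpha^{-\epsilon}$ are exactly the ingredients used here.

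There are, however, two concrete gaps. First, the entire quantitative content of the theorem lives in the step you dispatch in one sentence, namely that the off-diagonal and boundary remainders are small. Making this precise is Proposition \ref{main proposition}, whose proof requires a time discretization of mesh $\delta$ subject to the competing constraints $\delta\alpha\gtrsim 1$ and $\delta^4\alpha^3\lesssim 1$ (leading to \eqref{delta restriction}), Nakao's martingale method (Doob's inequality for the discrete martingale built from the centered iterated integrals $c_{k,k'}(h)-\E[c_{k,k'}(h)|\mathcal F_{h\delta}]$), increment estimates for $\xi$ in $H^{-1}$ and $H^{-2-\gamma}$ (Lemmas \ref{E H^-1} and \ref{E H2-g}), and a Burkholder--Davis--Gundy/Kolmogorov argument for the residual martingale $I_{251}$; none of this follows from ``iterating once'' in continuous time, and without the discretization it is not clear how you would obtain any rate $\alpha^{-\epsilon}$. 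Second, after the drift-to-dissipation comparison the remainder $f_t$ defined in \eqref{def of f} is only H\"older continuous in time with values in $H^{-\beta}$, $\beta>3$, not of bounded variation, so the Duhamel formula for $\xi-\bar\xi$ cannot be written as an ordinary integral against the heat semigroup. The paper must invoke the Young-integral result \cite{Young integral} to define the solution map $\mathcal G$ and to bound $\sup_t\|\mathcal G(f_t)\|_{H^{-\tilde\vartheta}}$ by $\|f\|_{C^{\rho\zeta}([0,T],H^{-\tilde\vartheta})}$; this is also where the interpolation between the $C^{\rho}H^{-\beta}$ smallness and the $H^{-1}$ boundedness of $f$ produces $\zeta$. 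Your plan, which interpolates the final estimate against the trivial $L^2$ bound and appeals to Kolmogorov continuity of $\xi-\bar\xi$ itself, does not address this regularity obstruction. A minor further point: your schematic Gr\"onwall inequality places the expectation inside the integral against a random $\Phi$; it must be closed pathwise first, using that $\int_0^T\Phi\,ds\lesssim\|\xi_0\|^2_{L^2}$ holds deterministically by the energy estimates, and only then can expectations be taken.
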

	
We can make the right-hand of inequality \eqref{thm1.1} small by first choosing $\theta\in \ell^2(\Z^2_0)$ with small norm $\|\theta\|_{\ell^{\infty}}$ (see Examples \ref{Eb theta} and \ref{Eb theta N2N} in Section \ref{sec-prep} below), and then taking $\alpha$ big enough. This result is an analogue of \cite[Theorem 1.1]{Pappalettera}, where the author measured the closedness of solutions in the stronger H\"older space $C^\delta([0,1],H^{-\vartheta}),\, \delta>0$. The key idea in the proof of \cite[Theorem 1.1]{Pappalettera} is to express the difference $h-\bar h$ of solutions to \eqref{pappa-eq-1} and \eqref{pappa-eq-2} in terms of a random distribution $f$, see the beginning of \cite[Section 4]{Pappalettera} or \eqref{def of f} below for a similar quantity. If $f$ were differentiable in time, then $h-\bar h$ could be estimated using the mild expression involving $f$ and an analytic semigroup; in the absence of time regularity on $f$, one needs to apply \cite[Theorem 1]{Young integral} which can be thought of as a generalization of such estimates. In our case, we have to deal with the extra nonlinear terms in equations \eqref{SPDE} and \eqref{PDE}, thus we shall combine the above idea with the quantitative arguments developed in \cite{FLD quantitative}, and then apply the Gronwall lemma to get the desired estimate. Compared to \cite[Theorem 1.1]{FLD quantitative}, the coefficient $C_1$ in \eqref{thm1.1} might explode as $\kappa\to 0$, thus we cannot prove a similar estimate for the 2D Euler equation.
	
Our second main result shows the phenomenon of dissipation enhancement.
	
	\begin{theorem}\label{thm2}
		Given $\lambda>0$, $p \geq 1$ and $R>0$, we can find parameters $\nu>0$, $\alpha>0$ and $\theta \in \ell^2$, such that for every $\xi_0$ with $\|\xi_0\|_{L^2} \leq R$, there exists a random constant $C=C(\omega)>0$ with finite $p$-th moment, such that  the solution of \eqref{SPDE} satisfies the following exponential decay: $\P$-a.s.,
		$$\|\xi_t\|_{L^2} \leq C e^{-\lambda t} \|\xi_0\|_{L^2} \quad \mbox{for all } t\geq 0.$$
	\end{theorem}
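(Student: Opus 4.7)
The plan is to use Theorem~\ref{thm1} to establish a one-step decay of $\|\xi_T\|_{L^2}$ on a single window of length $T\ge 1$, and then iterate this estimate over the intervals $[nT,(n+1)T]$ using the Markov structure of the driving OU flow together with the pathwise monotonicity of $\|\xi_t\|_{L^2}$. Two elementary facts set the stage. \emph{(i)} Testing \eqref{PDE} against $\bar\xi$ and using div-freeness of $\bar u$ together with the Poincar\'e inequality on $\mathbb T^2$ gives $\|\bar\xi_t\|_{L^2}\le \exp(-4\pi^2(\kappa+\nu)t)\|\xi_0\|_{L^2}$. \emph{(ii)} Testing \eqref{SPDE} against $\xi$ and using div-freeness of both $u$ and $\bm b$ yields, pathwise,
\begin{equation*}
\tfrac12 \tfrac{d}{dt}\|\xi_t\|_{L^2}^2+\kappa\|\nabla\xi_t\|_{L^2}^2=0,
\end{equation*}
so $\|\xi_t\|_{L^2}$ is a.s.\ non-increasing and $\int_0^T\|\nabla\xi_r\|_{L^2}^2\,dr\le R^2/(2\kappa)$. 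Fix $T\ge 1$ and then choose $\nu$ so large that $\exp(-4\pi^2(\kappa+\nu)T)\le \tfrac14 e^{-\lambda T}$.

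The next step is to bridge the gap between the $H^{-\vartheta}$ closeness supplied by Theorem~\ref{thm1} and the $L^2$ closeness at time $T$ via the interpolation inequality
\begin{equation*}
\|\xi_T-\bar\xi_T\|_{L^2}\le \|\xi_T-\bar\xi_T\|_{H^{-\vartheta}}^{s/(s+\vartheta)}\bigl(\|\xi_T\|_{H^s}+\|\bar\xi_T\|_{H^s}\bigr)^{\vartheta/(s+\vartheta)}
\end{equation*}
for a small $s>0$. For $\bar\xi$, parabolic smoothing of \eqref{PDE} gives $\|\bar\xi_T\|_{H^s}\lesssim_{\kappa,\nu,T}\|\xi_0\|_{L^2}$. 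For $\xi$, a bootstrap from the pathwise bound in (ii), exploiting again the vanishing of the transport terms in $L^2$ pairings and the 2D Ladyzhenskaya inequality to tame the nonlinearity, should yield $\E[\|\xi_T\|_{H^s}^{2p}]\lesssim 1$ uniformly in $\alpha$ for sufficiently small $s>0$. Combining this interpolation with H\"older and Theorem~\ref{thm1} (first shrinking $\|\theta\|_{\ell^\infty}$, then enlarging $\alpha$) makes $\E[\|\xi_T-\bar\xi_T\|_{L^2}]$ arbitrarily small. Together with (i), Markov's inequality then delivers, for any $\varepsilon>0$, $\P(\|\xi_T\|_{L^2}>q\|\xi_0\|_{L^2})\le\varepsilon$, where $q:=\tfrac12 e^{-\lambda T}<e^{-\lambda T}$, uniformly over initial data with $\|\xi_0\|_{L^2}\le R$.

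By the Markov property of the OU flow and the preservation of the ball $\{\|\cdot\|_{L^2}\le R\}$ guaranteed by (ii), the same bound holds on each window $[nT,(n+1)T]$ conditional on $\mathcal F_{nT}$. Combined with the deterministic bound $\|\xi_{(n+1)T}\|_{L^2}\le \|\xi_{nT}\|_{L^2}$ on the exceptional event, this gives
\begin{equation*}
\E\bigl[\|\xi_{(n+1)T}\|_{L^2}^p\,\big|\,\mathcal F_{nT}\bigr]\le (q^p+\varepsilon)\|\xi_{nT}\|_{L^2}^p.
\end{equation*}
Choosing $\varepsilon$ so that $(q^p+\varepsilon)e^{p\lambda T}<1$, a geometric sum yields $\sum_n \E[(e^{\lambda nT}\|\xi_{nT}\|_{L^2})^p]<\infty$, so $\sup_n e^{\lambda nT}\|\xi_{nT}\|_{L^2}$ lies in $L^p(\Omega)$. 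Finally, the pathwise monotonicity transfers decay between grid points via $\sup_{t\ge 0}e^{\lambda t}\|\xi_t\|_{L^2}\le e^{\lambda T}\sup_n e^{\lambda nT}\|\xi_{nT}\|_{L^2}$, giving the desired random constant $C(\omega)$ of finite $p$-th moment.

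The main obstacle is the uniform-in-$\alpha$ $H^s$-moment bound on $\xi$ required for the interpolation step. Because the OU components $\eta^{\alpha,k}(t)$ have variance of order $\alpha$, naive positive-regularity estimates for the stochastic transport term in \eqref{SPDE} blow up as $\alpha\to\infty$, and the divergence-free structure of $\bm b$ together with the specific choice of modes $\sigma_k$ (possibly in the spirit of the quantitative computations of \cite{FLD quantitative} invoked already in Theorem~\ref{thm1}) must be exploited carefully to extract the modest regularity gain needed.
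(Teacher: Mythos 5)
There is a genuine gap, and you have put your finger on it yourself: your argument hinges on a uniform-in-$\alpha$ moment bound $\E[\|\xi_T\|_{H^s}^{2p}]\lesssim 1$ at the \emph{fixed} time $T$, which you do not prove and which is not available from the pathwise energy identity. That identity only yields $\int_0^T\|\nabla\xi_r\|_{L^2}^2\,dr\le \|\xi_0\|_{L^2}^2/(2\kappa)$, i.e.\ control of positive regularity in a time-averaged sense; it guarantees good times $t^*$ but says nothing about $t=T$. Any attempt to upgrade this to a pointwise-in-time bound (parabolic smoothing, an $H^s$ energy estimate) brings the transport term $\bm b\cdot\nabla\xi$ into play with amplitude of order $\sqrt{\nu\alpha}$, and indeed the whole mechanism of mixing is that $\bm b$ pushes energy to high frequencies, so one should not expect positive Sobolev norms of $\xi$ to stay bounded uniformly in $\alpha$ at a prescribed time. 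Without this step your interpolation bridge from $H^{-\vartheta}$ to $L^2$ collapses, and the one-step contraction $\P(\|\xi_T\|_{L^2}>q\|\xi_0\|_{L^2})\le\varepsilon$ is not established.

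The paper circumvents this obstacle entirely, and the trick is worth internalizing: it never converts the negative-norm closeness into $L^2$ closeness of $\xi_T-\bar\xi_T$. Instead, on the good event where $\sup_{t\in[n,n+1]}\|\xi_t-\bar\xi^n_t\|_{H^{-1}}\le c_1\|\xi_n\|_{L^2}$, one has $\|\xi_t\|_{H^{-1}}^2\le 2\|\xi_n\|_{L^2}^2(e^{-\lambda_1(t-n)}+c_1^2)$, and this is combined with the energy identity $\frac{d}{dt}\|\xi_t\|_{L^2}^2=-2\kappa\|\xi_t\|_{H^1}^2$ and the interpolation $\|\xi_t\|_{L^2}^2\le\|\xi_t\|_{H^{-1}}\|\xi_t\|_{H^1}$ to get the closed differential inequality
\begin{equation*}
\frac{d}{dt}\,\|\xi_t\|_{L^2}^2\;\le\;-\,\frac{\kappa\,\|\xi_t\|_{L^2}^4}{\|\xi_n\|_{L^2}^2\bigl(e^{-\lambda_1(t-n)}+c_1^2\bigr)},
\end{equation*}
whose solution yields $\|\xi_{n+1}\|_{L^2}\le c_2\|\xi_n\|_{L^2}$ with $c_2$ small for large $\nu$. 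Only negative regularity of $\xi$ is ever used, and the $H^1$ norm appears solely through the dissipation term, where it helps rather than hurts. Your iteration scheme, the use of monotonicity of $\|\xi_t\|_{L^2}$ to fill in between grid points, and the summability argument for the $p$-th moment of $C(\omega)$ are all sound and close in spirit to the paper's Steps 2--4 (the paper uses Borel--Cantelli where you use $L^p$ summability of $\sup_n e^{\lambda nT}\|\xi_{nT}\|_{L^2}$, which is a fine variant); the missing ingredient is precisely the mechanism for the one-step $L^2$ contraction.
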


This theorem improves \cite[Theorem 1.2]{Pappalettera} in two aspects: first, we deal with the nonlinear equation \eqref{SPDE} rather than the linear heat equation \eqref{pappa-eq-1}; second, the enhanced exponential decay of $\|\xi_t\|_{L^2}$ is shown for all sufficiently large $t>0$, instead of on a finite interval. We briefly discuss the key ingredients for proving the latter result. Note that the solution $\xi$ to \eqref{SPDE} is time homogeneous, due to the stationarity of the Ornstein-Uhlenbeck flow $\bm{b}$; combined with the estimate \eqref{thm1.1} restricted to the unit interval $[0,1]$, we conclude easily that similar result, up to taking conditional expectation, holds on any interval $[n,n+1]$ if equation \eqref{PDE} is restarted at time $t=n$ with the initial value $\xi_n$. As a consequence, we can show that $\E\|\xi_{n+1}\|_{L^2} \le c_0 \E\|\xi_{n}\|_{L^2}$ where $c_0>0$ can be very small by choosing parameters $\nu,\, \alpha$ and $\|\theta\|_{\ell^\infty}$ in a suitable way. Once we have such estimate, it is relatively standard to show the enhanced exponential decay; see Section \ref{subsec-proof-thm-2} for the detailed proofs. We mention that the initial condition $\xi_0$ is restricted in a ball of arbitrary (but fixed) radius $R$; this is due to the nonlinearity of \eqref{SPDE}, see the end of  \cite[Section 2.1]{Luo23b} for similar discussions.

	We make some further comments on the differences between our methods and those in \cite{Pappalettera}. First,  the main results of \cite{Pappalettera} are stated in dimension $d\ge 3$, while the corresponding 2D assertions are derived by assuming translation invariance in one direction, see the discussions in \cite[Remark 2.2]{Pappalettera}. The reason is due to a technical constraint on the Sobolev indices for product of functions: if $\phi\in H^a(\mathbb{T}^d)$ and $\psi\in H^b(\mathbb{T}^d)$ with $a, b<d/2$ and $a+b>0$, then one has $\phi\,\psi\in H^{a+b-d/2}$ and $\|\phi\,\psi\|_{H^{a+b-d/2}} \lesssim_{a,b,d} \|\phi \|_{H^a} \|\psi\|_{H^b}$, cf. \cite[Lemma 2.1]{Pappalettera} for the general case  $d\ge 2$, or Lemma \ref{HHH} below for the 2D case. If one wants to directly apply this result to estimate the $H^1$-norm of $\bm{b}(t)\cdot\nabla \phi$, where $\phi\in H^{2+\gamma}$ for some small $\gamma>0$, then a possible choice of parameters would be $a=d/2-\gamma<d/2,\, b=1+\gamma<d/2$, and one has
	$$\|\bm{b}(t)\cdot\nabla \phi \|_{H^1} \lesssim_\gamma \|\bm{b}(t) \|_{H^{d/2-\gamma}} \|\nabla \phi \|_{H^{1+\gamma}} \le \|\bm{b}(t) \|_{H^{d/2-\gamma}} \|\phi \|_{H^{2+\gamma}}; $$
	however, the above choice of parameters results in $d\ge 3,\, \gamma\in (0,(d-2)/2)$.
	In order to treat directly the 2D case, we make the following simple but key observation: since $\bm{b}(t)$ is divergence free in space, the function $\bm{b}(t)\cdot\nabla \phi$ has zero spatial average and thus one can apply Poincar\'e's inequality to get
	$$\|\bm{b}(t)\cdot\nabla \phi \|_{H^1} \lesssim \|\nabla(\bm{b}(t)\cdot\nabla \phi) \|_{L^2} \le \|\nabla\bm{b}(t)\cdot\nabla \phi \|_{L^2} + \|\bm{b}(t)\cdot \nabla^2 \phi \|_{L^2}; $$
	note that now we only need to estimate $L^2$-norm of products, it is possible to choose suitable parameters such that the above product rule of Sobolev functions is applicable in the 2D case, see \eqref{L2 decompose} below for details.
	
	Next, we have tried to avoid using the supremum in time of Sobolev norms of $\bm{b}(t,\cdot)$, with one exception in Lemma \ref{I25}; in this way, most of the estimates do not involve logarithmic terms, making them look simpler than those in \cite{Pappalettera}.
	
	We finish the short introduction with the structure of the paper. We present some preliminary results in Section 2 which will be frequently used below. Then we prove in Section 3 a few useful estimates on the solution $\xi$ to equation \eqref{SPDE}; as in \cite{Pappalettera}, the main technical estimate is Proposition \ref{main proposition} whose proof will be postponed to Section 5 in order not to interrupt the readability of the paper. The main results (Theorems \ref{thm1} and \ref{thm2}) will be proved in Section 4, again following some ideas in \cite{Pappalettera} with suitable modifications to deal with the nonlinearities.

	\section{Preparations} \label{sec-prep}
	
	Recall that $\Z_0^2$ consists of 2D nonzero integer points; let $\{W^k\}_{k\in \Z^2_0}$ be a family of independent two-sided Brownian motions defined on some filtered probability space $(\Omega,\{ \mathcal{F}_t\}_{t\in \R}, \P)$. For every $\alpha>1$, the processes
	$$	\eta^{\alpha,k}(t):=\int_{-\infty}^{t}\alpha e^{-\alpha(t-s)}\, dW_s^k , \quad t\geq 0,\  k\in \Z^2_0 $$
	constitute a family of independent Ornstein-Uhlenbeck processes, which are solutions of the 1D SDE
	$$ d \eta^{\alpha,k} =-\alpha\, \eta^{\alpha,k}\, dt+\alpha\, dW_t^k . $$
	It is clear that $\eta^{\alpha,k}$ is a stationary process, with the invariant Gaussian measure $N(0, \alpha/2)$. For the reader's convenience, we recall that the random vector field $\bm{b}$ is defined as
	$$ \bm{b}(t,x)=2\sqrt{\nu}\sum_{k\in \Z_0^2} \theta_k\, \sigma_k(x)\, \eta^{\alpha,k}(t),$$
	where $\nu>0$, $\theta\in \ell^2(\Z^2_0)$ is radially symmetric and has compact support, $\|\theta \|_{\ell^2}=1$, and $\sigma_k(x)=\frac{k^\perp}{|k|} e^{2\pi ik\cdot x},\, k\in \Z_0^2 $ constitute a CONS of the space of divergence free vector fields in $L^2(\T^2,\R^2)$.
	
	We next introduce the definition of weak solutions for the equation \eqref{SPDE}, namely
	$$\partial_t \xi+u\cdot\nabla\xi+\bm{b}\cdot\nabla \xi=\kappa \Delta \xi, $$
	with $u=K\ast \xi$ and initial data $ \xi_0 \in L^2(\mathbb{T}^2)$.
	
	\begin{definition}\label{solution of 1.1}
		Suppose $\xi_0 \in L^2(\mathbb{T}^2)$. A stochastic process $\xi: \Omega \times [0,\infty) \rightarrow L^2(\T^2)$ is called a weak solution of \eqref{SPDE}, if there exists a $\P$-negligible set $\mathcal{N}\subset \Omega$ such that for every $\omega \in \mathcal{N}^c$, it holds $\xi(\omega,\cdot)\in L^{\infty}\big([0,\infty),L^2(\T^2)\big)$ and
		$$ \langle \phi, \xi_t \rangle=\<\phi, \xi_s\>+ \int_{s}^{t}\< u_r\cdot\nabla\phi, \xi_r\> \, dr +\int_{s}^{t} \< \bm{b}(r)\cdot\nabla\phi, \xi_r\> \, dr+\kappa\int_{s}^{t}\<\Delta\phi, \xi_r\> \, dr, $$
		for every test function $\phi\in C^{\infty}(\T^2)$ and every $0\leq s<t<\infty $.
	\end{definition}
	
	It is easy to know that, given any $L^2$-initial condition $\xi_0$, \eqref{SPDE} admits a unique weak solution satisfying the following $\P$-a.s. energy estimate:
	\begin{equation} \label{priori estimates 1}
		\sup_{t\in [0,\infty)}\Big(\|\xi_t\|^2_{L^2}+2\kappa \int_{0}^{t}\|\nabla \xi_s\|^2_{L^2} \, ds \Big) \leq \|\xi_0\|^2_{L^2}.
	\end{equation}
	Similarly, for the solution of \eqref{PDE}, it holds
	\begin{equation} \label{priori estimates 2}
		\sup_{t\in [0,\infty)}\Big(\|\bar{\xi}_t\|^2_{L^2}+2(\kappa+\nu) \int_{0}^{t}\|\nabla \bar{\xi}_s\|^2_{L^2} \, ds \Big) \leq \|\xi_0\|^2_{L^2}.
	\end{equation}	
	
	Now we state several technical lemmas for later use; as they are classical results in harmonic analysis, we omit their proofs. The first result is concerned with the product of Sobolev functions, see e.g. \cite[Corollary 2.55]{BCD11}.
	\begin{lemma}\label{HHH}
		For any $s_1,s_2 <1$, if $s_1+s_2>0$, then for any $u \in H^{s_1}(\T^2)$ and $v \in H^{s_2}(\T^2)$, we have $uv \in H^{s_1+s_2-1}(\T^2)$, and the following inequality holds:
		$$	\|uv\|_{H^{s_1+s_2-1}} \lesssim \|u\|_{H^{s_1}} \|v\|_{H^{s_2}}. $$
	\end{lemma}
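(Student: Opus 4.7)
The plan is to prove this standard bilinear estimate via Bony's paraproduct decomposition, combined with the Littlewood--Paley characterization $H^s(\T^2)=B^{s}_{2,2}(\T^2)$. I would fix a dyadic partition of unity on $\Z^2$ with the usual Fourier projectors $\Delta_j$ (spectrum in an annulus $\{|k|\sim 2^j\}$) and $S_j=\sum_{\ell<j}\Delta_\ell$ (spectrum in a ball $\{|k|\lesssim 2^j\}$), and split
$$uv=T_u v+T_v u+R(u,v),\qquad T_u v:=\sum_j S_{j-1}u\cdot\Delta_j v,\qquad R(u,v):=\sum_j \Delta_j u\cdot\widetilde\Delta_j v,$$
with $\widetilde\Delta_j=\Delta_{j-1}+\Delta_j+\Delta_{j+1}$, then control each of the three pieces separately in $H^{s_1+s_2-1}$.

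For the paraproduct $T_u v$, the hypothesis $s_1<1=d/2$ enters through the Bernstein/Sobolev bound $\|S_{j-1}u\|_{L^\infty}\lesssim 2^{j(1-s_1)}\|u\|_{H^{s_1}}$. Combined with the spectral localization of each summand of $T_u v$ to an annulus $\sim 2^j$, this yields
$$2^{j(s_1+s_2-1)}\|\Delta_j(T_u v)\|_{L^2}\lesssim \|u\|_{H^{s_1}}\bigl(2^{js_2}\|\Delta_j v\|_{L^2}\bigr),$$
whose $\ell^2_j$-norm is $\lesssim\|u\|_{H^{s_1}}\|v\|_{H^{s_2}}$. The twin term $T_v u$ is handled symmetrically using $s_2<1$; neither of the two paraproducts requires the positivity $s_1+s_2>0$.

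The main obstacle, and the unique place where $s_1+s_2>0$ is actually used, is the high-high remainder $R(u,v)$: each summand $\Delta_j u\cdot\widetilde\Delta_j v$ is spectrally supported in a \emph{ball} of radius $\sim 2^j$ rather than an annulus, so $\Delta_k R(u,v)$ collects contributions from all $j\gtrsim k$. Here I would invoke the Bernstein inequality $\|\Delta_k\phi\|_{L^2}\lesssim 2^k\|\phi\|_{L^1}$ on $\T^2$ together with H\"older to get
$$2^{k(s_1+s_2-1)}\|\Delta_k R(u,v)\|_{L^2}\lesssim \sum_{j\gtrsim k}2^{(k-j)(s_1+s_2)}a_j b_j,$$
where $a_j:=2^{js_1}\|\Delta_j u\|_{L^2}$ and $b_j:=2^{js_2}\|\widetilde\Delta_j v\|_{L^2}$ lie in $\ell^2$ by assumption. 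Precisely because $s_1+s_2>0$, the kernel $2^{(k-j)(s_1+s_2)}\mathbf 1_{j\gtrsim k}$ is summable in $k-j$, so Cauchy--Schwarz in $j$ followed by summation in $k$ bounds the $\ell^2_k$-norm by $\|a\|_{\ell^2}\|b\|_{\ell^2}=\|u\|_{H^{s_1}}\|v\|_{H^{s_2}}$. This remainder estimate is the delicate step; once it is in place, combining it with the two paraproduct bounds gives the claimed inequality.
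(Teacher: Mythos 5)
Your argument is correct and is precisely the standard paraproduct proof of this bilinear estimate; the paper itself omits the proof, citing \cite[Corollary 2.55]{BCD11}, and your Bony decomposition with the two paraproducts controlled via $s_1,s_2<1$ (Bernstein on $S_{j-1}$) and the remainder controlled via $s_1+s_2>0$ (summability of $2^{(k-j)(s_1+s_2)}$ over $j\gtrsim k$) is exactly the argument behind that reference. No gaps to report.
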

	
The following result follows easily from Gagliardo-Nirenberg's characterization of $H^{\alpha}$-norm for $\alpha\in (0,1)$, cf. \cite[Proposition 1.37]{BCD11}

	\begin{lemma}\label{HCH}
		Let $\alpha\in (0,1)$ and $\epsilon>0$ be such that $\alpha+\epsilon<1$, then for any $u \in C^{\alpha+\epsilon}(\T^2)$ and $v \in H^{\alpha}(\T^2)$,  it holds
		$$	\|uv\|_{H^{\alpha}} \lesssim \|u\|_{C^{\alpha+\epsilon}} \|v\|_{H^{\alpha}}. $$
	\end{lemma}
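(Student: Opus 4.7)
The plan is to use the Gagliardo-Nirenberg (Aronszajn-Slobodeckij) characterization pointed to in the paper: for $\alpha\in(0,1)$,
\[
\|w\|_{H^\alpha}^{2} \;\asymp\; \|w\|_{L^2}^{2} + [w]_{\alpha}^{2}, \qquad [w]_{\alpha}^{2}:=\iint_{\T^2\times\T^2}\frac{|w(x)-w(y)|^{2}}{|x-y|^{2+2\alpha}}\,dx\,dy.
\]
Once this characterization is available, controlling $\|uv\|_{H^\alpha}$ reduces to two estimates: the $L^2$ norm of the product and its fractional Gagliardo seminorm. The first is trivial: $\|uv\|_{L^2}\le\|u\|_{L^\infty}\|v\|_{L^2}\lesssim\|u\|_{C^{\alpha+\epsilon}}\|v\|_{H^\alpha}$, since $C^{\alpha+\epsilon}\hookrightarrow L^\infty$ on the torus.

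For the seminorm part I would split in the standard way using
\[
u(x)v(x)-u(y)v(y)=u(x)\bigl(v(x)-v(y)\bigr)+v(y)\bigl(u(x)-u(y)\bigr),
\]
so that $[uv]_\alpha^2\lesssim \|u\|_{L^\infty}^2[v]_\alpha^2 + \iint |v(y)|^2|u(x)-u(y)|^2|x-y|^{-2-2\alpha}\,dx\,dy$. The first piece is bounded by $\|u\|_{C^{\alpha+\epsilon}}^2\|v\|_{H^\alpha}^2$ via the embedding and the characterization again. For the second piece I would use the H\"older bound $|u(x)-u(y)|\le\|u\|_{C^{\alpha+\epsilon}}|x-y|^{\alpha+\epsilon}$, turning the inner integral into
\[
\|u\|_{C^{\alpha+\epsilon}}^{2}\iint_{\T^2\times\T^2}\frac{|v(y)|^2}{|x-y|^{2-2\epsilon}}\,dx\,dy.
\]

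The last step is a uniform pointwise bound on the $x$-integral: since we are in dimension two and $2-2\epsilon<2$, for each fixed $y\in\T^2$ the kernel $|x-y|^{-(2-2\epsilon)}$ is integrable on $\T^2$ with a bound depending only on $\epsilon$. Fubini then gives $\|u\|_{C^{\alpha+\epsilon}}^2\|v\|_{L^2}^2$, which is absorbed into $\|u\|_{C^{\alpha+\epsilon}}^2\|v\|_{H^\alpha}^2$. Summing the $L^2$ bound and the two seminorm bounds yields the claim.

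The only mildly delicate point (and therefore the step I would check most carefully) is the local integrability of the singular kernel: the condition $\alpha+\epsilon<1$ is irrelevant here, what matters is $\epsilon>0$, so the constraint in the statement is really there to make the H\"older decomposition compatible with the fractional characterization in the range $\alpha\in(0,1)$. No further subtlety arises because we never differentiate $u$ or $v$; all work is done at the level of difference quotients, which is precisely what the Gagliardo-Nirenberg characterization is designed for.
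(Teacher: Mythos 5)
Your proposal is correct and is precisely the argument the paper intends: the paper omits the proof but explicitly attributes the lemma to the Gagliardo--Nirenberg characterization of the $H^{\alpha}$-norm from \cite[Proposition 1.37]{BCD11}, and your splitting of the Gagliardo seminorm of $uv$ into an $L^{\infty}\times[\,\cdot\,]_{\alpha}$ piece and a H\"older piece with integrable kernel $|x-y|^{-(2-2\epsilon)}$ is the standard derivation from that characterization. The only minor caveat is that the hypothesis $\alpha+\epsilon<1$ is not entirely irrelevant: it guarantees that $\|u\|_{C^{\alpha+\epsilon}}$ is the genuine H\"older norm for which the pointwise bound $|u(x)-u(y)|\le\|u\|_{C^{\alpha+\epsilon}}|x-y|^{\alpha+\epsilon}$ holds as stated, which is exactly the step where you use it.
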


	\begin{lemma}[Interpolation inequality] \label{interpolation}
		For any $s_1<s<s_2$, there exists $\alpha \in (0,1)$ satisfying $s=\alpha s_1+(1-\alpha)s_2$, such that
		$$	\|u\|_{H^{s}} \leq \|u\|^{\alpha}_{H^{s_1}} \|u\|^{1-\alpha}_{H^{s_2}}.	$$
	\end{lemma}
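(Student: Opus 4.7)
The plan is to prove this via Fourier analysis on $\T^2$, exploiting the fact that the excerpt defines $H^s$ on the torus and restricts to functions with zero spatial average. Thus for $u\in H^s$ one has
$$\|u\|_{H^s}^2 = \sum_{k\in\Z_0^2} |k|^{2s} |\hat u(k)|^2,$$
where $\hat u(k)$ denotes the Fourier coefficient of $u$. The key algebraic observation is that if $\alpha\in(0,1)$ satisfies $s = \alpha s_1 + (1-\alpha) s_2$, then for every $k\in\Z_0^2$
$$|k|^{2s} = \bigl(|k|^{2s_1}\bigr)^{\alpha} \bigl(|k|^{2s_2}\bigr)^{1-\alpha},$$
so that $|k|^{2s}|\hat u(k)|^2 = \bigl(|k|^{2s_1}|\hat u(k)|^2\bigr)^{\alpha}\bigl(|k|^{2s_2}|\hat u(k)|^2\bigr)^{1-\alpha}$.

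The next step is to sum over $k\in\Z_0^2$ and apply H\"older's inequality with conjugate exponents $p=1/\alpha$ and $q=1/(1-\alpha)$, which yields
$$\sum_{k} |k|^{2s}|\hat u(k)|^2 \leq \Bigl(\sum_{k}|k|^{2s_1}|\hat u(k)|^2\Bigr)^{\alpha}\Bigl(\sum_{k}|k|^{2s_2}|\hat u(k)|^2\Bigr)^{1-\alpha} = \|u\|_{H^{s_1}}^{2\alpha}\|u\|_{H^{s_2}}^{2(1-\alpha)}.$$
Taking square roots gives the claimed bound. The existence of the required $\alpha\in(0,1)$ is immediate from $s_1<s<s_2$: one simply solves $s=\alpha s_1 + (1-\alpha)s_2$ to obtain $\alpha = (s_2-s)/(s_2-s_1)$, which lies in $(0,1)$.

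Since this is a standard, well-known inequality, I do not anticipate any real obstacle; the only thing to be mildly careful about is that both $s_1$ and $s_2$ (and hence $s$) can be negative, but the Fourier characterization above is valid for all real indices once we restrict to zero-mean functions, so the argument goes through uniformly. No other machinery is needed beyond H\"older's inequality on the sequence space $\ell^1(\Z_0^2)$.
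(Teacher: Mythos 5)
Your proof is correct and is the standard Fourier--H\"older argument; the paper itself omits the proof of this lemma, citing it as a classical fact, so there is nothing to compare against. The one point you rightly flagged --- that the characterization $\|u\|_{H^s}^2=\sum_{k\in\Z_0^2}|k|^{2s}|\hat u(k)|^2$ requires restricting to zero-mean functions --- is exactly the convention the paper adopts, and the identity $|k|^{2s}=(|k|^{2s_1})^{\alpha}(|k|^{2s_2})^{1-\alpha}$ together with H\"older with exponents $1/\alpha$ and $1/(1-\alpha)$ closes the argument with constant $1$, as stated.
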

	
	The next lemma gives a useful estimate on the Sobolev norms of the vector field $\bm{b}$.
	
	\begin{lemma}\label{lemma Eb}
		For every $p\geq 2$ and $\tau>0$, we have the following estimate:
		$$	\sup_{s\ge 0} \E\big[\|\bm{b}(s)\|^p_{H^{\tau}} \big] \lesssim \nu^{\frac{p}{2}}  \alpha^{\frac{p}{2}} \, C_{\theta,\tau,p} \, , $$
		where $C_{\theta,\tau,p}:=\sum_{k \in \mathbb{Z}_0^2} \theta_{k}^2 \, |k|^{p\tau} \in (0,\infty) $ is a constant depending on $\theta,\tau,p$.
	\end{lemma}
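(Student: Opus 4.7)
The plan is to bound $\|\bm{b}(s)\|_{H^\tau}^2$ pointwise (in $\omega$) by an explicit nonnegative quadratic form in the random coefficients $\eta^{\alpha,k}(s)$, then apply Minkowski's inequality in $L^{p/2}(\Omega)$, and finally invoke the stationary Gaussian moments of the Ornstein--Uhlenbeck processes.

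First I would compute the $H^\tau$-norm of $\bm{b}(s)$ using the Fourier expansion. Since $\{\sigma_k\}_{k\in \Z_0^2}$ forms a CONS of divergence-free fields in $L^2(\T^2,\R^2)$ and each $\sigma_k$ is an eigenfunction of $-\Delta$ with eigenvalue $4\pi^2|k|^2$, we have (after the standard pairing needed to make $\bm{b}$ real-valued, which preserves orthogonality)
\begin{equation*}
\|\bm{b}(s)\|_{H^\tau}^2 \;\lesssim_\tau\; \nu \sum_{k\in \Z_0^2} |k|^{2\tau}\, \theta_k^2\, |\eta^{\alpha,k}(s)|^2 .
\end{equation*}
Note that the sum is finite because $\theta$ has compact support.

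Next, since $p \geq 2$, the exponent $p/2$ is $\geq 1$ and Minkowski's inequality in $L^{p/2}(\Omega)$ applied to the nonnegative sum yields
\begin{equation*}
\Big\|\|\bm{b}(s)\|_{H^\tau}^2\Big\|_{L^{p/2}(\Omega)} \;\lesssim_\tau\; \nu \sum_{k\in \Z_0^2} |k|^{2\tau}\,\theta_k^2\, \big\|\eta^{\alpha,k}(s)\big\|_{L^p(\Omega)}^2 .
\end{equation*}
By stationarity, $\eta^{\alpha,k}(s) \sim N(0,\alpha/2)$, hence $\|\eta^{\alpha,k}(s)\|_{L^p(\Omega)} \lesssim_p \alpha^{1/2}$, uniformly in $s\ge 0$ and $k$. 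Raising to the $p/2$-th power gives
\begin{equation*}
\E\big[\|\bm{b}(s)\|_{H^\tau}^p\big] \;\lesssim_{\tau,p}\; (\nu\alpha)^{p/2}\Big(\sum_{k\in \Z_0^2} |k|^{2\tau}\,\theta_k^2\Big)^{p/2} .
\end{equation*}

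Finally, I would upgrade the exponent $2\tau$ inside the sum to $p\tau$ by Jensen's inequality: since $\|\theta\|_{\ell^2}=1$, the weights $\mu_k := \theta_k^2$ define a probability measure on the finite support of $\theta$, and the convexity of $x \mapsto x^{p/2}$ (valid for $p\ge 2$) gives $\big(\sum_k |k|^{2\tau}\mu_k\big)^{p/2} \leq \sum_k |k|^{p\tau}\mu_k = C_{\theta,\tau,p}$. Combining the two previous displays yields the desired estimate, uniform in $s\geq 0$. There is no real obstacle here; the only point requiring minor care is the Fourier-side computation of $\|\bm{b}(s)\|_{H^\tau}^2$, which must respect the reality condition when pairing modes $k$ and $-k$, but this is standard.
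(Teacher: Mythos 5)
Your proof is correct and follows essentially the same route as the paper: an exact Fourier computation of $\|\bm{b}(s)\|_{H^\tau}^2$, the Gaussian moment bound $\E|\eta^{\alpha,k}(s)|^p\lesssim_p\alpha^{p/2}$ from stationarity, and Jensen's inequality with the probability weights $\theta_k^2$ to pass from the exponent $2\tau$ to $p\tau$. The only (immaterial) difference is the order of operations — you apply Minkowski in $L^{p/2}(\Omega)$ and then Jensen to the deterministic sum, while the paper applies Jensen pointwise in $\omega$ and then takes expectation termwise — and both yield the same constant $C_{\theta,\tau,p}$.
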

	
	\begin{proof}
		Recall that $\sum_{k\in \Z_0^2} \theta_{k}^2=1$, then by the definition of $\bm{b}(s)$, Jensen's inequality yields
		\begin{equation}\label{b Jensen}
			\|\bm{b}(s)\|^p_{H^{\tau}}=\Big(4\nu \sum_{k \in \mathbb{Z}_0^2} \theta_{k}^2 \, (\eta^{\alpha,k}(s))^2 \, |k|^{2\tau}\Big)^{\frac{p}{2}} \leq (4\nu)^{\frac{p}{2}} \sum_{k \in \mathbb{Z}_0^2} \theta_{k}^2 \, \big|\eta^{\alpha,k}(s)\big|^p \, |k|^{p\tau}.
		\end{equation}
		Taking expectation, we arrive at
		\begin{equation*}
			\mathbb{E}\big[\|\bm{b}(s)\|^p_{H^{\tau}} \big] \leq (4\nu)^{\frac{p}{2}} \sum_{k \in \mathbb{Z}_0^2} \theta_{k}^2 \, |k|^{p\tau} \  \mathbb{E}  \Big[\big|\eta^{\alpha,k}(s)\big|^p \Big]\lesssim (\nu\alpha)^{\frac{p}{2}} \sum_{k \in \mathbb{Z}_0^2} \theta_{k}^2 \, |k|^{p\tau}
		\end{equation*}
		which gives us the desired estimate.
	\end{proof}
	
	\begin{remark}\label{Jensen C}
		For $n>1$, by Jensen's inequality we have $C_{\theta,\tau,p/n} \leq C^{1/n}_{\theta,\tau,p}$.
	\end{remark}
	
	In the following two examples, we compute explicitly the values of $\|\theta\|_{\ell^{\infty}}$ and $C_{\theta,\tau,p}$ for special choices of coefficients $\theta$.
	
	\begin{example}\label{Eb theta}
		For $N\ge 1$, we define $\theta\in \ell^2(\Z^2_0)$ as follows:
		\begin{equation}\nonumber
			\theta_k = \varepsilon_N \frac{1}{|k|^{a}} \mathbf{1}_{\left\{1 \leq |k| \leq N\right\}}, \quad k\in \Z_0^2,
		\end{equation}
		where $a\in (0,1)$, $\varepsilon_N$ is  a normalizing constant depending on $N$ such that $\|\theta\|_{\ell^2}=1$. Then for every $p \geq 1$, it holds
		\begin{equation*}
			\aligned
			\|\theta\|_{\ell^{\infty}} &\sim \Big(\frac{1-a}{\pi}\Big)^{1/2} \big(N^{2-2a}-1\big)^{-1/2}  \to 0 \quad \mbox{as } N\to \infty ,\\
			C_{\theta,\tau,p} &\sim \frac{2-2a}{2-2a+p\tau} \frac{N^{2-2a+p\tau}-1}{N^{2-2a}-1} \sim N^{p\tau} \quad \mbox{as } N\to \infty .
			\endaligned
		\end{equation*}
	\end{example}
	
	\begin{proof}
		By the definition of $C_{\theta,\tau,p}$, we can get
		$$C_{\theta,\tau,p}=\varepsilon_N^2 \, \sum_{1\leq |k|\leq N}  |k|^{-2a+p\tau}=:\varepsilon_N^2 \, d_N \, ,$$
		where $d_N=\sum_{1\leq |k|\leq N} |k|^{-2a+p\tau} $. Notice that $\|\theta\|^2_{\ell^2}=\sum_{1\leq |k|\leq N} \varepsilon^2_N |k|^{-2a}=1$, then we can estimate $\varepsilon_N^2=\big(\sum_{1\leq |k|\leq N}  |k|^{-2a}\big)^{-1} $ by integration as follows:
		\begin{equation}\nonumber
			\varepsilon_N^2 \sim \Big(\int_{1\leq |x| \leq N} |x|^{-2a}\, dx \Big)^{-1} =\Big(\int_{1}^{N} \int_{0}^{2\pi} \frac{r}{r^{2a}}\, d\varphi\, dr  \Big)^{-1} =\frac{1-a}{\pi} \big(N^{2-2a}-1\big)^{-1}.
		\end{equation}
		Furthermore, we can easily get $\|\theta\|_{\ell^{\infty}} =\varepsilon_N$ for a fixed $N$. In the same way, we can estimate $d_N\sim \frac{2\pi}{2-2a+p\tau}\big(N^{2-2a+p\tau}-1\big)$, and therefore we obtain the value of $C_{\theta,\tau,p}$.
	\end{proof}
	
	\begin{example}\label{Eb theta N2N}
		If we change the support set of the above example and define
		\begin{equation}\nonumber
			\theta_k = \varepsilon_N \frac{1}{|k|^{a}} \mathbf{1}_{\left\{N \leq |k| \leq 2N\right\}}, \quad k\in \Z_0^2,
		\end{equation}
		where $a> 0$ and $\varepsilon_N$ is  still a normalizing constant, then for every $p\geq 1$, we can obtain
		$$ \aligned
		\|\theta\|_{\ell^{\infty}} &\sim \begin{cases}
			\big(\frac{1-a}{\pi}\big)^{1/2} \big(2^{2-2a}-1\big)^{-1/2} N^{-1}, & 0<a< 1, \\
			\big(2\pi \log2\big)^{-1/2} N^{-1}, & a=1, \\
			\big(\frac{a-1}{\pi}\big)^{1/2} \big(1-2^{2-2a}\big)^{-1/2} N^{-1} ,& a>1;
		\end{cases} \\
		C_{\theta,\tau,p} &\sim \begin{cases}
			\frac{(2-2a)(2^{2-2a+p\tau}-1)}{(2-2a+p\tau)(2^{2-2a}-1)} \, N^{p\tau} ,& a \neq 1, \\
			\frac{2^{p\tau}-1}{ p\tau \log2 } \, N^{p\tau}, & a=1.
		\end{cases}
		\endaligned $$
	\end{example}	
	
	Finally we present a moment estimate of $\bm{b}$ in the space $C([0,T], H^{\tau})$, $T\geq 1$, which will be used in Section 5.4.
	\begin{lemma}\label{Eb sup lemma}
		Consider $\bm{b}$ as defined before, then for every $p\geq 2$ and $\tau>0$, it holds
		\begin{equation}\nonumber
			\mathbb{E}\Big[\sup_{s\in [0,T]}\|\bm{b}(s)\|^p_{H^{\tau}}\Big] \lesssim  \nu^{\frac{p}{2}} \alpha^{\frac{p}{2}} \, C_{\theta,\tau,p} \log^{\frac{p}{2}}(1+\alpha T), \quad \forall\, T \geq 1.
		\end{equation}	
	\end{lemma}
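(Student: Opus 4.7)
The strategy is to reuse the pointwise bound \eqref{b Jensen} obtained in the proof of Lemma~\ref{lemma Eb}, and then focus on controlling the supremum in time of each individual Ornstein--Uhlenbeck process $\eta^{\alpha,k}$. Indeed, starting from
$$\|\bm{b}(s)\|^p_{H^{\tau}} \leq (4\nu)^{\frac{p}{2}} \sum_{k \in \Z_0^2} \theta_{k}^2 \, |\eta^{\alpha,k}(s)|^p \, |k|^{p\tau},$$
the right-hand side is a sum of nonnegative terms, so I can bound
$$\sup_{s\in[0,T]}\|\bm{b}(s)\|^p_{H^{\tau}} \leq (4\nu)^{\frac{p}{2}} \sum_{k \in \Z_0^2} \theta_{k}^2 \, |k|^{p\tau} \sup_{s\in[0,T]}|\eta^{\alpha,k}(s)|^p,$$
and after taking expectation and using stationarity (the law of $\sup_{s \le T}|\eta^{\alpha,k}(s)|$ does not depend on $k$), the problem reduces to establishing
\begin{equation}\label{eq:OUsup}
\E\Big[\sup_{s\in[0,T]} |\eta^{\alpha}(s)|^p \Big] \lesssim_p \alpha^{\frac p2}\log^{\frac p2}(1+\alpha T)
\end{equation}
for a single stationary OU process $\eta^{\alpha}$ with covariance $\frac{\alpha}{2}e^{-\alpha|t-s|}$. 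Plugging \eqref{eq:OUsup} back gives the claimed estimate.

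To prove \eqref{eq:OUsup}, I would first do the case $p=1$ and then bootstrap to $p\geq 2$ via Borell--TIS Gaussian concentration. For $p=1$, since $\eta^{\alpha}$ is a centered stationary Gaussian process, I would discretize $[0,T]$ into $N := \lceil \alpha T\rceil$ subintervals $I_j$ of length at most $1/\alpha$, and split
$$\sup_{s\in[0,T]}|\eta^{\alpha}(s)| \le \max_{0\le j\le N}|\eta^{\alpha}(j/\alpha)| + \max_{0\le j<N}\sup_{s,t\in I_j}|\eta^{\alpha}(s)-\eta^{\alpha}(t)|.$$
The first term involves the maximum of $N+1$ Gaussians of variance $\alpha/2$, and the classical Gaussian max bound yields $\sqrt{\alpha\log(2+\alpha T)}$ in expectation. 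For the second term, note that on an interval of length $1/\alpha$ the increments $\eta^{\alpha}(s)-\eta^{\alpha}(t)$ are Gaussian with variance $\alpha(1-e^{-\alpha|t-s|}) \lesssim \alpha^2|t-s|$; a Kolmogorov/Garsia-type argument on each $I_j$ together with a union bound over $j$ again produces a factor $\sqrt{\alpha \log(2+\alpha T)}$. Combining the two contributions yields $\E[\sup_{s\le T}|\eta^{\alpha}(s)|]\lesssim \sqrt{\alpha\log(1+\alpha T)}$ for $T\ge 1$. Finally, Borell's inequality applied to the centered Gaussian process $\eta^{\alpha}$ on $[0,T]$, whose marginal variance is $\alpha/2$, upgrades this to the $p$-th moment bound \eqref{eq:OUsup} for every $p\ge 2$.

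The main technical obstacle is the $\log$ factor in \eqref{eq:OUsup}: the naive bound obtained by simply integrating the pointwise estimate in time blows up polynomially in $T$, and one really has to exploit the Gaussianity and the short correlation time $1/\alpha$ of the OU process. Once \eqref{eq:OUsup} is in hand, substituting into the display above and recalling the definition of $C_{\theta,\tau,p} = \sum_{k}\theta_k^2|k|^{p\tau}$ gives
$$\E\Big[\sup_{s\in[0,T]}\|\bm{b}(s)\|^p_{H^{\tau}}\Big] \lesssim \nu^{\frac p2}\alpha^{\frac p2}\, C_{\theta,\tau,p}\,\log^{\frac p2}(1+\alpha T),$$
as desired.
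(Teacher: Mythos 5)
Your proposal is correct and follows essentially the same route as the paper: both reduce the claim, via the termwise Jensen bound \eqref{b Jensen}, to the maximal inequality $\E[\sup_{s\le T}|\eta^{\alpha,k}(s)|^p]\lesssim \alpha^{p/2}\log^{p/2}(1+\alpha T)$ for a single stationary Ornstein--Uhlenbeck process. The only difference is that the paper simply quotes this inequality from \cite{ZGH OU}, whereas you sketch a self-contained (and valid) proof of it by discretization at scale $1/\alpha$ together with Gaussian maximal bounds and Borell--TIS.
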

	\begin{proof}
		Recall the useful estimate from \cite{ZGH OU}: for every fixed $p\geq 1$, it holds
		\begin{equation}\nonumber
			\mathbb{E}\Big[\sup_{s\in [0,T]}|\eta^{\alpha,k}(s)|^p\Big] \lesssim \alpha^{\frac{p}{2}} \log^{\frac{p}{2}}(1+\alpha T)  \quad \mbox{for all } k \in\mathbb{Z}_0^2;
		\end{equation}
		by \eqref{b Jensen}, for $p \geq 2$, we take supremum and then expectation on $	\|\bm{b}(s)\|^p_{H^{\tau}}$ to obtain
		\begin{equation}\nonumber
			\mathbb{E}\Big[\sup_{s\in [0,T]}\|\bm{b}(s)\|^p_{H^{\tau}}\Big] \le (4\nu)^{\frac{p}{2}} \sum_{k\in \Z_0^2} |k|^{p\tau} \theta_{k}^2 \, \mathbb{E}\Big[\sup_{s\in [0,T]}|\eta^{\alpha,k}(s)|^p\Big] \lesssim   \nu^{\frac{p}{2}}  \alpha^{\frac{p}{2}} \, C_{\theta,\tau,p}  \log^{\frac{p}{2}}(1+\alpha T).
		\end{equation}
	\end{proof}

	\section{Useful Estimates}
	
	We first prove an estimate on the time increment of $\xi$ in $H^{-1}$-norm, which will be repeatedly used in the proofs of Lemma \ref{E H2-g} and Proposition \ref{main proposition}. Thanks to the estimate \eqref{priori estimates 1}, we often control $\|\xi_s\|_{L^2}$ by $\|\xi_0\|_{L^2}$ in the following proofs.

	\begin{lemma}\label{E H^-1}
		Let $t \geq0$, $\delta \in (0,1)$ satisfy $\delta \alpha \gtrsim 1$, then for every $p \geq 2$ and $\gamma \in (0,1)$, it holds
		$$ 	\mathbb{E}\Big[\|\xi_{t+\delta}-\xi_t\|^p_{H^{-1}}\Big] \lesssim \delta^p \nu^{\frac{p}{2}} \alpha^{\frac{p}{2}}   \, C_{\theta,1+\gamma,p} \,  \|\xi_0\|^p_{L^2}\big(1+\|\xi_0\|^p_{L^2}\big) . $$
	\end{lemma}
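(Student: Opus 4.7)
The plan is to start from the weak formulation in Definition \ref{solution of 1.1}: for every smooth $\phi$,
$$\<\phi,\xi_{t+\delta}-\xi_t\>=\kappa\int_t^{t+\delta}\<\Delta\phi,\xi_r\>\,dr+\int_t^{t+\delta}\<u_r\cdot\nabla\phi,\xi_r\>\,dr+\int_t^{t+\delta}\<\bm{b}(r)\cdot\nabla\phi,\xi_r\>\,dr,$$
to compute $\|\xi_{t+\delta}-\xi_t\|_{H^{-1}}$ as the supremum of the right-hand side over $\|\phi\|_{H^1}\le 1$, and to take $p$-th moments only at the very end. I would estimate the three contributions in turn.

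For the viscous term, integration by parts yields $|\<\Delta\phi,\xi_r\>|\le\|\phi\|_{H^1}\|\nabla\xi_r\|_{L^2}$, and Cauchy--Schwarz in time combined with the dissipation bound \eqref{priori estimates 1} produces a $\delta^{1/2}\kappa^{-1/2}\|\xi_0\|_{L^2}$ contribution. For the nonlinear term, divergence-freeness of $u_r$ allows me to rewrite $\<u_r\cdot\nabla\phi,\xi_r\>=\<\nabla\phi,u_r\xi_r\>$; Ladyzhenskaya's 2D inequality, the Biot--Savart identity $\|\nabla u_r\|_{L^2}=\|\xi_r\|_{L^2}$, and Poincar\'e for the zero-mean $u_r$ then give the pointwise bound $\|u_r\xi_r\|_{L^2}\le\|u_r\|_{L^4}\|\xi_r\|_{L^4}\lesssim \|\xi_r\|_{L^2}^{3/2}\|\nabla\xi_r\|_{L^2}^{1/2}$, and another application of \eqref{priori estimates 1} via H\"older in $r$ produces a $\delta^{3/4}\kappa^{-1/4}\|\xi_0\|_{L^2}^2$ contribution.

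For the $\bm{b}$-term I would again use $\mathrm{div}\,\bm{b}(r)=0$ and the 2D Sobolev embedding $H^{1+\gamma}\hookrightarrow L^\infty$ (valid for any $\gamma>0$) to obtain $|\<\bm{b}(r)\cdot\nabla\phi,\xi_r\>|\lesssim\|\phi\|_{H^1}\|\bm{b}(r)\|_{H^{1+\gamma}}\|\xi_0\|_{L^2}$, then estimate
$$\E\Big[\Big(\int_t^{t+\delta}\|\bm{b}(r)\|_{H^{1+\gamma}}\,dr\Big)^p\Big]\le\delta^{p-1}\int_t^{t+\delta}\E\|\bm{b}(r)\|_{H^{1+\gamma}}^p\,dr\lesssim\delta^p\nu^{p/2}\alpha^{p/2}C_{\theta,1+\gamma,p}$$
by Jensen's inequality and Lemma \ref{lemma Eb}, which hits the target rate on the nose (up to the factor $\|\xi_0\|_{L^2}^p$). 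To finish, the viscous and nonlinear pieces give $\delta^{p/2}\|\xi_0\|_{L^2}^p$ and $\delta^{3p/4}\|\xi_0\|_{L^2}^{2p}$ with $\kappa$-dependent constants; the hypothesis $\delta\alpha\gtrsim 1$ provides $\delta^{-1}\lesssim\alpha$ and thus upgrades every missing power of $\delta$ to a power of $\alpha$ no larger than $\alpha^{p/2}$. Since $C_{\theta,1+\gamma,p}\ge 1$ on $\Z_0^2$ and $\nu,\kappa$ are fixed, the residual prefactors are absorbed into the implicit $\lesssim$, and $\|\xi_0\|_{L^2}^{2p}=\|\xi_0\|_{L^2}^p\cdot\|\xi_0\|_{L^2}^p$ fits comfortably inside $\|\xi_0\|_{L^2}^p(1+\|\xi_0\|_{L^2}^p)$. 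The main delicate point I foresee is precisely the bookkeeping of these fractional $\delta$-powers against the trade $\delta^{-a}\lesssim\alpha^a$; no analytic input beyond the lemmas already collected in Section \ref{sec-prep} should be required.
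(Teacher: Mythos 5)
Your proposal is correct and follows essentially the same route as the paper's proof: start from the weak formulation, bound the viscous, nonlinear and $\bm{b}$ contributions separately against $\|\phi\|_{H^1}$, and take $p$-th moments only at the end, with the $\bm{b}$-term handled exactly as in the paper via $H^{1+\gamma}\hookrightarrow L^\infty$ and Lemma \ref{lemma Eb}. The only (immaterial) differences are that you treat the nonlinear term with Ladyzhenskaya's inequality, producing a $\delta^{3/4}$ rate, where the paper uses $\|u_s\|_{L^\infty}\lesssim\|\xi_s\|_{H^{\gamma}}$ plus interpolation to get $\delta^{1-\gamma/2}$ — both are then upgraded to the target rate via $\delta\alpha\gtrsim 1$ — and your $\kappa^{-1/2}$ prefactor on the viscous term should be $\kappa^{1/2}$, which is harmless since $\kappa$ is fixed.
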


	\begin{proof}
		By Definition \ref{solution of 1.1}, for every test function $\phi\in C^{\infty}(\mathbb{T}^2)$, we have
		$$	\big|\langle \phi, \xi_{t+\delta}-\xi_t \rangle\big| \leq \int_{t}^{t+\delta}\big|\langle u_s\cdot \nabla \phi, \xi_s \rangle\big| \, ds + \int_{t}^{t+\delta}\big|\langle \bm{b}(s)\cdot \nabla \phi, \xi_s \rangle\big| \, ds +\kappa\int_{t}^{t+\delta}\big|\langle \Delta\phi, \xi_s \rangle\big| \,ds. $$
		
		Now we will deal with each term separately. First, according to Sobolev embedding theorem and Lemma \ref{interpolation}, for $\gamma \in (0,1)$, we have the following estimate:
		\begin{equation}\nonumber
			\big|\langle u_s\cdot \nabla \phi, \xi_s \rangle\big| \, \leq \,  \|u_s\|_{L^{\infty}} \|\nabla \phi\|_{L^{2}} \|\xi_s\|_{L^{2}}
			\, \lesssim \, \|\xi_s\|_{H^{\gamma}} \|\phi\|_{H^{1}} \|\xi_0\|_{L^{2}}
			\, \lesssim \, \|\xi_s\|^{\gamma}_{H^1} \|\phi\|_{H^{1}} \|\xi_0\|^{2-\gamma}_{L^{2}}.
		\end{equation}
		Hence we can use H\"older's inequality and \eqref{priori estimates 1} to get
		\begin{equation}\label{Lemma 3.1 1st term}
			\begin{split}
				\int_{t}^{t+\delta}\big|\langle u_s\cdot \nabla \phi, \xi_s \rangle\big| \, ds & \lesssim  \|\phi\|_{H^{1}} \|\xi_0\|^{2-\gamma}_{L^{2}} \int_{t}^{t+\delta} \|\xi_s\|^{\gamma}_{H^1} \, ds\\
				&\leq \|\phi\|_{H^{1}} \|\xi_0\|^{2-\gamma}_{L^{2}}  \Big(\int_{t}^{t+\delta} \|\xi_s\|^{2}_{H^1} \, ds \Big)^{\frac{\gamma}{2}} \Big(\int_{t}^{t+\delta} 1 \, ds \Big)^{1-\frac{\gamma}{2}}\\
				&\leq \kappa^{-\frac{\gamma}{2}} \delta^{1-\frac{\gamma}{2}} \|\phi\|_{H^{1}} \|\xi_0\|^{2}_{L^{2}} 	.
			\end{split}
		\end{equation}
		In the same way, we can estimate the second term. For $\gamma\in (0,1)$, we have
		\begin{equation}\nonumber
			\begin{split}
				\big|\langle \bm{b}(s)\cdot \nabla \phi, \xi_s \rangle\big| \, \leq \, \|\bm{b}(s)\|_{L^{\infty}} \|\nabla \phi\|_{L^{2}} \|\xi_s\|_{L^{2}} \, \lesssim \, \|\bm{b}(s)\|_{H^{1+\gamma}} \|\phi\|_{H^{1}} \|\xi_0\|_{L^{2}},
			\end{split}
		\end{equation}
		then the following inequality holds:
		\begin{equation}\label{Lemma 3.1 2nd term}
			\int_{t}^{t+\delta}\big|\langle \bm{b}(s)\cdot \nabla \phi, \xi_s \rangle\big| \, ds \lesssim \|\phi\|_{H^{1}} \|\xi_0\|_{L^{2}} \int_{t}^{t+\delta} \|\bm{b}(s)\|_{H^{1+\gamma}} \, ds.
		\end{equation}
		As for the last term, \eqref{priori estimates 1} and H\"older's inequality yield
		\begin{equation}\label{Lemma 3.1 3rd term}
			\begin{split}
				\kappa\int_{t}^{t+\delta}\big|\langle \Delta\phi, \xi_s \rangle\big| \, ds & \leq \kappa\int_{t}^{t+\delta} \| \Delta\phi\|_{H^{-1}}  \|\xi_s\|_{H^{1}} \,ds\\
				&\leq \kappa \, \|\phi\|_{H^{1}} \Big( \int_{t}^{t+\delta} \|\xi_s\|^2_{H^{1}} \, ds \Big) ^{\frac{1}{2}} \Big(\int_{t}^{t+\delta} 1 \, ds \Big)^{\frac{1}{2}}\\
				&\leq  \kappa^{\frac{1}{2}} \delta^{\frac{1}{2}} \|\phi\|_{H^{1}} \|\xi_0\|_{L^{2}} .
			\end{split}
		\end{equation}	
		
		Having \eqref{Lemma 3.1 1st term}-\eqref{Lemma 3.1 3rd term} at hand, we deduce
		$$	\big|\langle \phi, \xi_{t+\delta}-\xi_t \rangle\big| \lesssim \|\phi\|_{H^{1}} \bigg[ \kappa^{-\frac{\gamma}{2}} \delta^{1-\frac{\gamma}{2}}  \|\xi_0\|^{2}_{L^{2}}  + \|\xi_0\|_{L^{2}} \int_{t}^{t+\delta} \|\bm{b}(s)\|_{H^{1+\gamma}} \, ds + \kappa^{\frac{1}{2}} \delta^{\frac{1}{2}}  \|\xi_0\|_{L^{2}}   \bigg]. $$
		Since $\phi$ is arbitrary, the above formula yields
		$$ \|\xi_{t+\delta}-\xi_t\|_{H^{-1}}\lesssim \kappa^{-\frac{\gamma}{2}} \delta^{1-\frac{\gamma}{2}}  \|\xi_0\|^{2}_{L^{2}} + \|\xi_0\|_{L^{2}} \int_{t}^{t+\delta} \|\bm{b}(s)\|_{H^{1+\gamma}} \, ds +   \kappa^{\frac{1}{2}} \delta^{\frac{1}{2}} \|\xi_0\|_{L^{2}} .$$
		Taking the $p$-th moment, we finally get
		\begin{equation}\nonumber
			\begin{split}
				&\quad \ \mathbb{E}\Big[\|\xi_{t+\delta}-\xi_t\|^p_{H^{-1}}\Big] \\
				&\lesssim \kappa^{-\frac{\gamma}{2}p} \, \delta^{(1-\frac{\gamma}{2})p} \|\xi_0\|^{2p}_{L^2}  +\|\xi_0\|^{p}_{L^2} \, \mathbb{E}\bigg[\Big(\int_{t}^{t+\delta} \|\bm{b}(s)\|_{H^{1+\gamma}} \, ds\Big)^p\bigg]+\kappa^{\frac{p}{2}}\delta^{\frac{p}{2}} \|\xi_0\|^{p}_{L^2}  \\
				& \leq \kappa^{-\frac{\gamma}{2}p} \, \delta^{(1-\frac{\gamma}{2})p} \|\xi_0\|^{2p}_{L^2}  +\|\xi_0\|^{p}_{L^2}  \delta^{p-1}  \int_{t}^{t+\delta} \mathbb{E} \big[\|\bm{b}(s)\|^p_{H^{1+\gamma}} \big]ds  + \kappa^{\frac{p}{2}}\delta^{\frac{p}{2}} \|\xi_0\|^{p}_{L^2}  \\
				&\lesssim \kappa^{-\frac{\gamma}{2}p} \, \delta^{(1-\frac{\gamma}{2})p} \|\xi_0\|^{2p}_{L^2}  + \delta^p \nu^{\frac{p}{2}}  \alpha^{\frac{p}{2}} \, C_{\theta,1+\gamma,p} \, \|\xi_0\|^p_{L^{2}}  + \kappa^{\frac{p}{2}}\delta^{\frac{p}{2}} \|\xi_0\|^{p}_{L^2}  .
			\end{split}
		\end{equation}
		Noting that $\kappa>0$ is a fixed parameter, we finish the proof by taking into account our restrictions on $\delta$ and $\alpha$.
	\end{proof}
	
	Based on the conclusion of Lemma \ref{E H^-1}, we can further deduce another useful estimate.
	
	\begin{lemma}\label{E H2-g}
		Let $t\geq 0$, $\delta\in(0,1)$ satisfy $\delta\alpha \gtrsim 1$ and $ \delta^4 \alpha^3 \lesssim 1$, then for every $p\geq 2$ and $\gamma \in (0,1)$, it holds
		$$ \mathbb{E}\Big[\|\xi_{t+\delta}-\xi_t\|^p_{H^{-2-\gamma}}\Big] \lesssim \nu^{p}  \big(  \delta^{\frac{p}{2}} +  \alpha^{-\frac{p}{2}}  \big) C_{\theta,1+\gamma,2p}  \|\xi_{0}\|^{p}_{L^2} \big(1+\|\xi_{0}\|^{p}_{L^2}\big)^2 . $$
	\end{lemma}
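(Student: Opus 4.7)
The strategy refines Lemma \ref{E H^-1} by testing against the weaker norm $H^{-2-\gamma}$, which allows us to extract cancellation from the oscillatory Ornstein--Uhlenbeck integrand. I fix $\phi \in C^\infty(\T^2)$ and apply Definition \ref{solution of 1.1} to split $\langle \phi, \xi_{t+\delta}-\xi_t\rangle$ into the Euler, noise and dissipation contributions. The Euler and dissipation terms are handled as in Lemma \ref{E H^-1}, but paired now against $\|\phi\|_{H^{2+\gamma}}$; interpolation of $\|\xi_s\|_{H^\gamma}$ between $L^2$ and $H^1$ together with the energy inequality \eqref{priori estimates 1} yields a time factor $\delta^{1-\gamma/2}\le \delta^{1/2}$ since $\gamma\in(0,1)$, easily absorbed into the stated $\delta^{p/2}$.

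The core step concerns the noise term. I would write
$$\int_t^{t+\delta}\langle \bm{b}(s)\cdot\nabla\phi,\xi_s\rangle\, ds=\langle B_\delta\cdot\nabla\phi,\xi_t\rangle+\int_t^{t+\delta}\langle \bm{b}(s)\cdot\nabla\phi,\xi_s-\xi_t\rangle\, ds,$$
with $B_\delta:=\int_t^{t+\delta}\bm{b}(s)\, ds$. For the averaged piece, the SDE governing $\eta^{\alpha,k}$ gives
$$\int_t^{t+\delta}\eta^{\alpha,k}(s)\, ds=(W^k_{t+\delta}-W^k_t)-\alpha^{-1}\bigl(\eta^{\alpha,k}(t+\delta)-\eta^{\alpha,k}(t)\bigr),$$
a Gaussian whose $p$-th moment is $\lesssim \delta^{p/2}+\alpha^{-p/2}$. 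Summing as in the proof of Lemma \ref{lemma Eb} yields $\E[\|B_\delta\|^p_{H^{1+\gamma}}]\lesssim \nu^{p/2}(\delta^{p/2}+\alpha^{-p/2})\,C_{\theta,1+\gamma,p}$, which is precisely the source of the two scalings $\delta^{1/2}$ and $\alpha^{-1/2}$ in the stated bound. Then the Sobolev embedding $H^{1+\gamma}(\T^2)\hookrightarrow L^\infty(\T^2)$ gives $|\langle B_\delta\cdot\nabla\phi,\xi_t\rangle|\lesssim \|B_\delta\|_{H^{1+\gamma}}\|\phi\|_{H^{2+\gamma}}\|\xi_0\|_{L^2}$.

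For the residual piece, pair in $H^1\times H^{-1}$ and use the 2D product trick observed in the introduction: since $\mathrm{div}\,\bm{b}(s)=0$, the quantity $\bm{b}(s)\cdot\nabla\phi$ has zero spatial average, so by Poincar\'e and Lemma \ref{HHH},
$$\|\bm{b}(s)\cdot\nabla\phi\|_{H^1}\lesssim \|\nabla(\bm{b}(s)\cdot\nabla\phi)\|_{L^2}\lesssim \|\bm{b}(s)\|_{H^{1+\gamma}}\|\phi\|_{H^{2+\gamma}}.$$
Applying Lemma \ref{E H^-1} (with $p\mapsto 2p$) to $\|\xi_s-\xi_t\|_{H^{-1}}$, Lemma \ref{lemma Eb} (with $p\mapsto 2p$) to $\|\bm{b}(s)\|_{H^{1+\gamma}}$, Cauchy--Schwarz in $\omega$, and H\"older in $s$ produces a contribution of order $(\delta^2\alpha)^p\,\nu^p\,C_{\theta,1+\gamma,2p}\,\|\xi_0\|^p_{L^2}(1+\|\xi_0\|^p_{L^2})^2$. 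The hypotheses $\delta\alpha\gtrsim 1$ and $\delta^4\alpha^3\lesssim 1$ together imply $\delta^3\alpha^2\lesssim 1$, i.e.\ $\delta^2\alpha\lesssim \delta^{1/2}$, so this is absorbed into $\delta^{p/2}$.

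Finally, collecting the three contributions, dividing by $\|\phi\|_{H^{2+\gamma}}$, and passing to the supremum over test functions recovers the $H^{-2-\gamma}$ norm and yields the claim, using Remark \ref{Jensen C} to upgrade $C_{\theta,1+\gamma,p}$ to $C_{\theta,1+\gamma,2p}$ where needed. I expect the main obstacle to be the sharp Gaussian moment bound on $\int_t^{t+\delta}\eta^{\alpha,k}(s)\, ds$ via the SDE decomposition, which is what generates the $\alpha^{-1/2}$ gain that is absent from the cruder $H^{-1}$ estimate of Lemma \ref{E H^-1}; a secondary technical point is ensuring the product estimate for $\bm{b}(s)\cdot\nabla\phi$ is sharp enough in 2D, which only succeeds through the Poincar\'e observation.
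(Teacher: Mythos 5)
Your proposal is correct and follows essentially the same route as the paper: the decisive step in both is to split the noise contribution into $\langle (\int_t^{t+\delta}\bm{b}(s)\,ds)\cdot\nabla\phi,\xi_t\rangle$ plus a residual against $\xi_s-\xi_t$, use the SDE identity $\int_t^{t+\delta}\bm{b}(s)\,ds=2\sqrt{\nu}\sum_k\theta_k\sigma_k(W^k_{t+\delta}-W^k_t)-\alpha^{-1}(\bm{b}(t+\delta)-\bm{b}(t))$ to produce the $\delta^{p/2}+\alpha^{-p/2}$ factor, and control the residual via Poincar\'e, Lemma \ref{HHH}, Cauchy--Schwarz and Lemma \ref{E H^-1}. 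The only cosmetic difference is that the paper also splits the nonlinear term into $\xi_s-\xi_t$ and $\xi_t$ pieces, whereas you estimate it directly by interpolation as in Lemma \ref{E H^-1}; both give a contribution absorbed by $\delta^{p/2}$, so the arguments are equivalent.
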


	\begin{proof}
		The idea of proof is similar to that of Lemma \ref{E H^-1}, but we divide the right hand side into more terms: for every test function $\phi\in C^{\infty}(\mathbb{T}^2)$,
		\begin{equation}\nonumber
			\begin{split}
				\big| \langle \phi, \xi_{t+\delta}-\xi_t \rangle\big| &\leq \int_{t}^{t+\delta}\big|\langle u_s\cdot \nabla \phi, \xi_s-\xi_t \rangle\big| \,ds + \int_{t}^{t+\delta}\big|\langle u_s\cdot \nabla \phi, \xi_t \rangle\big|\,ds\\
				&\ + \int_{t}^{t+\delta}\big|\langle \bm{b}(s)\cdot \nabla \phi, \xi_s-\xi_t \rangle \big| \,ds+ \Big|\int_{t}^{t+\delta}\langle \bm{b}(s) \cdot \nabla \phi, \xi_t \rangle \, ds \Big|+\kappa\int_{t}^{t+\delta}\big|\langle \Delta\phi, \xi_s \rangle\big| \, ds.
			\end{split}
		\end{equation}
		
		We estimate each term respectively. For the first one,
		\begin{equation}\nonumber
			\begin{split}
				\int_{t}^{t+\delta}\big|\langle u_s\cdot \nabla \phi, \xi_s-\xi_t \rangle\big| \,ds \ &\leq 	\int_{t}^{t+\delta}\|u_s \cdot \nabla \phi\|_{H^{1}} \, \|\xi_s-\xi_t\|_{H^{-1}} \, ds\\
				&\lesssim \int_{t}^{t+\delta}\|\nabla (u_s \cdot \nabla \phi)\|_{L^{2}} \, \|\xi_s-\xi_t\|_{H^{-1}} \, ds,
			\end{split}
		\end{equation}
		where in the second step we have used Poincar\'e's inequality. By Lemma \ref{HHH} and Sobolev embedding theorem, for $\gamma \in (0,1)$, we have the following estimate:
		\begin{equation}\label{L2 decompose}
			\begin{split}
				\|\nabla (u_s \cdot \nabla \phi)\|_{L^{2}} & \leq \|\nabla u_s \cdot \nabla \phi\|_{L^{2}}+ \|u_s\cdot \nabla^2 \phi\|_{L^{2}}\\
				&\lesssim \|\nabla u_s\|_{L^{2}} \|\nabla \phi \|_{L^{\infty}} + \|u_s\|_{H^{1-\gamma}} \|\nabla^2 \phi\|_{H^{\gamma}}\\
				&\lesssim  \|u_s\|_{H^{1}} \|\phi \|_{H^{2+\gamma}} + \|\xi_s\|_{H^{-\gamma}} \| \phi\|_{H^{2+\gamma}}\\
				&\lesssim \|\xi_s\|_{L^{2}} \|\phi \|_{H^{2+\gamma}} ,
			\end{split}
		\end{equation}
		substituting this estimate into the above inequality, we arrive at
		\begin{equation}\nonumber
			\int_{t}^{t+\delta}\big|\langle u_s\cdot \nabla \phi, \xi_s-\xi_t \rangle\big| \, ds \lesssim \|\phi\|_{H^{2+\gamma}}\|\xi_0\|_{L^{2}} \int_{t}^{t+\delta}\|\xi_s-\xi_t\|_{H^{-1}} \,ds.
		\end{equation}
		For the second term, we use Sobolev embedding theorem to get
		\begin{equation}\nonumber
			\begin{split}
				\int_{t}^{t+\delta}\big|\langle u_s\cdot \nabla \phi, \xi_t \rangle\big| \,ds &\leq  \int_{t}^{t+\delta} \|u_s \|_{L^{2}} \|\nabla \phi\|_{L^{\infty}} \|\xi_t\|_{L^{2}} \,ds \\
				&\lesssim \|\phi\|_{H^{2+\gamma}} \|\xi_0\|_{L^{2}}	\int_{t}^{t+\delta} \|\xi_s\|_{H^{-1}}  \, ds\\
				&\lesssim \delta \, \|\phi\|_{H^{2+\gamma}} \|\xi_0\|^2_{L^{2}}.
			\end{split}
		\end{equation}
		Similarly to \eqref{L2 decompose}, we can estimate the third term:
		\begin{equation}\nonumber
			\begin{split}
				\int_{t}^{t+\delta}\big|\langle \bm{b}(s)\cdot \nabla \phi, \xi_s-\xi_t \rangle \big| \, ds
				&\lesssim \int_{t}^{t+\delta}\|\nabla (\bm{b}(s) \cdot \nabla \phi)\|_{L^{2}}\|\xi_s-\xi_t\|_{H^{-1}} \,ds \\
				&\lesssim \|\phi\|_{H^{2+\gamma}} \int_{t}^{t+\delta}\|\bm{b}(s)\|_{H^{1}}  \|\xi_s-\xi_t\|_{H^{-1}} \,ds.
			\end{split}
		\end{equation}
		As for the next term,
		\begin{equation}\nonumber
			\Big|\int_{t}^{t+\delta}\langle \bm{b}(s) \cdot \nabla \phi, \xi_t \rangle \, ds \Big| =\Big| \Big\langle \Big(\int_{t}^{t+\delta}\bm{b}(s) \, ds\Big)\cdot\nabla\phi, \xi_t \Big\rangle\Big| \leq \|\nabla\phi\|_{L^{\infty}} \|\xi_t\|_{L^2} \, \Big\| \int_{t}^{t+\delta}\bm{b}(s) \, ds\Big\|_{L^2} ;
		\end{equation}
		recalling the definition of $\bm{b}$, we have
		$$\int_{t}^{t+\delta} \bm{b}(s) \, ds=2\sqrt{\nu}\sum_{k\in \mathbb{Z}_0^2}\theta_k\sigma_k \big(W_{t+\delta}^k-W_t^k\big)-\alpha^{-1} \big(\bm{b}(t+\delta)-\bm{b}(t)\big),$$
		and therefore, for $\gamma \in (0,1)$, we can get
		\begin{equation}\nonumber
			\begin{split}
				\Big|\int_{t}^{t+\delta}\langle \bm{b}(s) \cdot \nabla \phi, \xi_t \rangle \, ds \Big| &
				\lesssim \nu^{\frac12} \|\phi\|_{H^{2+\gamma}}\|\xi_0\|_{L^2} \bigg[\sum_{k\in \mathbb{Z}_0^2}\theta_k^2  \big(W^k_{t+\delta}-W^k_t \big)^2 \bigg]^{\frac{1}{2}}\\
				&\quad + \alpha^{-1} \, \|\phi\|_{H^{2+\gamma}} \|\xi_0\|_{L^2}  \big(\|\bm{b}(t+\delta)\|_{L^{2}}+\|\bm{b}(t)\|_{L^{2}} \big).			
			\end{split}
		\end{equation}
		Finally, the fifth term can be estimated as follows:
		$$	\kappa\int_{t}^{t+\delta}\big|\langle \Delta\phi, \xi_s \rangle\big| \, ds\leq \kappa \int_{t}^{t+\delta} \|\Delta \phi\|_{L^2} \|\xi_s\|_{L^2} \, ds \lesssim \kappa\delta \, \|\phi\|_{H^{2+\gamma}}\|\xi_0\|_{L^2}.$$
		Combining these results together and noticing the arbitrariness of $\phi$, we arrive at
		\begin{equation}\label{lemma3.2 decomposition}
			\begin{split}
				\|\xi_{t+\delta}-\xi_t\|_{H^{-2-\gamma}} &\lesssim \|\xi_0\|_{L^{2}} \int_{t}^{t+\delta}\|\xi_s-\xi_t\|_{H^{-1}} \, ds + \delta \, \|\xi_0\|^2_{L^{2}} \\
				&\quad +\int_{t}^{t+\delta}\|\bm{b}(s)\|_{H^{1}}  \|\xi_s-\xi_t\|_{H^{-1}}\, ds  + \nu^{\frac12} \|\xi_0\|_{L^2} \bigg[\sum_{k\in \mathbb{Z}_0^2}\theta_k^2  \big(W^k_{t+\delta}-W^k_t \big)^2 \bigg]^{\frac{1}{2}} \\
				&\quad+\alpha^{-1} \|\xi_0\|_{L^2}\big(\|\bm{b}(t+\delta)\|_{L^{2}}+\|\bm{b}(t)\|_{L^{2}} \big) +\kappa\delta \, \|\xi_0\|_{L^2}.
			\end{split}
		\end{equation}
		
		To complete the proof, we also need the following several estimates. By Lemma \ref{E H^-1},
		\begin{equation}\label{E1}
			\begin{split}
				\mathbb{E} \bigg[\Big(\int_{t}^{t+\delta}\|\xi_s-\xi_t\|_{H^{-1}} ds\Big)^p \bigg] &\leq  \delta^{p-1}  \int_{t}^{t+\delta} \mathbb{E}\big[ \|\xi_s-\xi_t\|^p_{H^{-1}} \big]ds \\
				&\lesssim \delta^{2p} \nu^{\frac{p}{2}}  \alpha^{\frac{p}{2}}  \, C_{\theta,1+\gamma,p} \, \|\xi_{0}\|^{p}_{L^2}\big(1+\|\xi_{0}\|^{p}_{L^2}\big) .
			\end{split}
		\end{equation}
		H\"older's inequality yields
		\begin{equation}\label{E2}
			\begin{split}
				&\quad \ \mathbb{E}\bigg[\Big(	\int_{t}^{t+\delta}\|\bm{b}(s)\|_{H^{1}}  \|\xi_s-\xi_t\|_{H^{-1}} \, ds \Big)^p \bigg] \\
				& \leq \mathbb{E} \bigg[\delta^{p-1} \int_{t}^{t+\delta}\|\bm{b}(s)\|^p_{H^{1}}  \|\xi_s-\xi_t\|^p_{H^{-1}} \, ds \bigg] \\
				&\leq  \delta^{p-1}\int_{t}^{t+\delta}\Big(\mathbb{E}\big[ \|\bm{b}(s)\|^{2p}_{H^{1}}\big]\Big)^{\frac{1}{2}} \, \Big(\mathbb{E} \big[ \|\xi_s-\xi_t\|^{2p}_{H^{-1}}\big]\Big) ^{\frac{1}{2}} \, ds\\
				&\lesssim  \delta^{2p} \nu^p \alpha^p \, C_{\theta,1+\gamma,2p} \, \|\xi_0\|^p_{L^2} \big(1+\|\xi_{0}\|^{p}_{L^2}\big)  .
			\end{split}
		\end{equation}
		Besides, by Jensen's inequality, for $p\geq 2$, the following formula holds:
		\begin{equation}\label{E3}
			\mathbb{E} \bigg[  \Big(\sum_{k\in \mathbb{Z}_0^2}\theta_k^2 \, \big(W^k_{t+\delta}-W^k_t \big)^2 \Big)^{\frac{p}{2}}  \, \bigg] \leq \mathbb{E} \bigg[ \sum_{k\in \mathbb{Z}_0^2}\theta_k^2 \, \big|W^k_{t+\delta}-W^k_t\big|^p \bigg] \lesssim \sum_{k\in \mathbb{Z}_0^2}\theta_k^2 \, \delta^{\frac{p}{2}} = \delta^{\frac{p}{2}}.
		\end{equation}
		According to the definition of $\bm{b}(t)$, for $t\geq 0$, we have
		\begin{equation}\label{E4}
			\mathbb{E} \Big[\|\bm{b}(t+\delta)\|^p_{L^{2}} \Big]=\mathbb{E} \Big[\|\bm{b}(t)\|^p_{L^{2}} \Big] \leq (4\nu)^{\frac{p}{2}} \sum_{k \in \mathbb{Z}_0^2} \theta^2_k \, \mathbb{E} \Big[\big|\eta^{\alpha,k}(t)\big|^p \Big]\lesssim \nu^{\frac{p}{2}} \alpha^{\frac{p}{2}}.
		\end{equation}
		Inserting \eqref{E1}-\eqref{E4} into \eqref{lemma3.2 decomposition}, we can easily get
		\begin{equation}\nonumber
			\begin{split}
				\mathbb{E}\Big[\|\xi_{t+\delta}-\xi_t\|^p_{H^{-2-\gamma}}\Big] &\lesssim  \delta^{2p} \nu^{\frac{p}{2}}  \alpha^{\frac{p}{2}} \, C_{\theta,1+\gamma,p} \,  \|\xi_{0}\|^{2p}_{L^2} \big(1+\|\xi_{0}\|^{p}_{L^2}\big) + \delta^p \, \|\xi_{0}\|^{2p}_{L^2} \\
				&\quad +  \delta^{2p} \nu^p \alpha^p \, C_{\theta,1+\gamma,2p} \, \|\xi_0\|^p_{L^2}\big(1+\|\xi_{0}\|^{p}_{L^2}\big) + \delta^{\frac{p}{2}} \nu^{\frac{p}{2}}   \, \|\xi_{0}\|^{p}_{L^2}  \\
				&\quad+ \nu^{\frac{p}{2}} \alpha^{-\frac{p}{2}} \, \|\xi_{0}\|^{p}_{L^2}  +\kappa^p \delta^p \, \|\xi_{0}\|^{p}_{L^2}.
			\end{split}
		\end{equation}
		It is clear that the parts involving the $L^2$-norm of initial data are dominated by $\|\xi_0\|^p_{L^2} \big(1+ \|\xi_{0}\|^{p}_{L^2}\big)^2$; the conclusion follows by noticing our previous assumptions on the parameters.
	\end{proof}
	
	The following result is analogous to \cite[Proposition 3.3]{Pappalettera} where a similar estimate, against test functions, was proved for the solution $h$ of \eqref{pappa-eq-1}. We remark that the stronger estimate as below is needed at the end of the proof of Proposition \ref{estimate on f}. We first divide the interval $[0,T]$ into many subintervals of the form $ [n\delta, (n+1)\delta]$, $n\in \mathbb{N}$, where $\delta\in(0,1)$ is a small parameter such that $T/\delta$ is an integer, then we estimate the quantity in each interval of length $\delta$, and finally sum them up.
	
	\begin{proposition}\label{main proposition}
		Fix $\beta>3$, $\gamma\in(0,\frac{1}{3})$, then there exist $\epsilon>0$, $\delta\in (0,1)$ and $\rho\in(0,\frac{1}{4})$ such that for $\alpha$ large enough, the following estimate holds:	
		\begin{equation} \nonumber
			\begin{split}
				&\mathbb{E}\bigg[\sup_{1\leq m<n \leq T/\delta-1} \frac1{(|n-m|\delta)^\rho } \Big\|\xi_{n\delta}-\xi_{m\delta}-(\kappa+\nu)\int_{m\delta}^{n\delta} \Delta \xi_s \, ds+\int_{m\delta}^{n\delta} u_s\cdot \nabla \xi_s \, ds \Big\|_{H^{-\beta}} \bigg]\\
				&\quad \lesssim T \|\xi_0\|_{L^2} \big(1+\|\xi_0\|_{L^{2}}\big)^2 \big(\nu^{1+\frac{\gamma}{2}}\alpha^{-\epsilon}  +\nu^{\frac{1}{2}}\|\theta\|_{\ell^{\infty}}\big).
			\end{split}
		\end{equation}
	\end{proposition}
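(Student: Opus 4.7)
The starting point is to simplify the expression inside the $H^{-\beta}$ norm using Definition \ref{solution of 1.1}. In the distributional sense,
$$\xi_{n\delta}-\xi_{m\delta} = -\int_{m\delta}^{n\delta} u_s\cdot\nabla\xi_s\,ds - \int_{m\delta}^{n\delta}\bm{b}(s)\cdot\nabla\xi_s\,ds + \kappa\int_{m\delta}^{n\delta}\Delta\xi_s\,ds,$$
so the quantity inside the norm reduces to $-\int_{m\delta}^{n\delta}\bigl(\bm{b}(s)\cdot\nabla\xi_s+\nu\Delta\xi_s\bigr)ds$. The whole point is to show that the It\^o-type correction hidden in the first integral matches the $\nu\Delta$ deterministic viscosity coming from the special symmetry of $\sigma_k = k^\perp/|k|\,e^{2\pi ik\cdot x}$ (the computation alluded to near \eqref{citation 1}), up to errors of the required size.

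I would then discretize via $\int_{m\delta}^{n\delta} = \sum_{j=m}^{n-1}\int_{j\delta}^{(j+1)\delta}$ and, on each subinterval, freeze $\xi$ at $\xi_{j\delta}$:
$$\int_{j\delta}^{(j+1)\delta}\bm{b}(s)\cdot\nabla\xi_s\,ds = \Bigl[\int_{j\delta}^{(j+1)\delta}\bm{b}(s)\,ds\Bigr]\cdot\nabla\xi_{j\delta} + \int_{j\delta}^{(j+1)\delta}\bm{b}(s)\cdot\nabla(\xi_s-\xi_{j\delta})\,ds.$$
Using the identity already exploited in the proof of Lemma \ref{E H2-g},
$$\int_{j\delta}^{(j+1)\delta}\bm{b}(s)\,ds = 2\sqrt{\nu}\sum_k\theta_k\sigma_k\bigl(W^k_{(j+1)\delta}-W^k_{j\delta}\bigr) - \alpha^{-1}\bigl(\bm{b}((j+1)\delta)-\bm{b}(j\delta)\bigr),$$
the first piece splits into a martingale contribution (producing the $\nu^{1/2}\|\theta\|_{\ell^\infty}$ term after BDG and summation over $j$) and a telescoping boundary term of size $\alpha^{-1}$. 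For the remainder, I plug the equation for $\xi_s-\xi_{j\delta}$ back in and isolate the leading term $-\int_{j\delta}^{(j+1)\delta}\bm{b}(s)\cdot\nabla\!\int_{j\delta}^s\bm{b}(r)\cdot\nabla\xi_r\,dr\,ds$; freezing $\xi_r\approx\xi_{j\delta}$ and using the stationary covariance $\E[\eta^{\alpha,k}(s)\eta^{\alpha,k}(r)]=\tfrac{\alpha}{2}e^{-\alpha|s-r|}$ together with the CONS identity $\sum_k\theta_k^2\sigma_k\otimes\sigma_k\cdot\nabla(\sigma_k\cdot\nabla) = \tfrac{1}{4\nu}\cdot\nu\Delta$ (from the choice of $\sigma_k$) reproduces $-\nu\delta\,\Delta\xi_{j\delta}$, cancelling the $\nu\Delta$ term in the original expression, up to $\alpha^{-\epsilon}$ errors.

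To close the estimates, I would bound each residual in $H^{-\beta}$ ($\beta>3$ leaves comfortable room for the product rule Lemma \ref{HHH}) using Lemmas \ref{E H^-1}--\ref{E H2-g} for the time-increments of $\xi$, Lemma \ref{lemma Eb} for moments of $\bm{b}$, and the same Biot--Savart / interpolation trick as in \eqref{Lemma 3.1 1st term} to control the nonlinear contribution $u_s\cdot\nabla\xi_s$ appearing when one expands $\xi_s-\xi_{j\delta}$; this is where the factor $(1+\|\xi_0\|_{L^2})^2$ enters. After summing over the $O(T/\delta)$ subintervals and choosing $p$ large, the supremum over $(m,n)$ against the weight $(|n-m|\delta)^{-\rho}$ is recovered either by a direct $\ell^p$-bound $\sum_{m<n}(|n-m|\delta)^{-\rho p}<\infty$ (for $\rho p>2$) or by a Kolmogorov-type continuity argument applied to the random path $t\mapsto \int_0^t(\bm{b}\cdot\nabla\xi+\nu\Delta\xi)ds$, both of which impose compatibility conditions of the form $\delta\alpha\gtrsim 1$, $\delta^4\alpha^3\lesssim 1$ already assumed in the preceding lemmas.

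The main obstacle I anticipate is the second-order cancellation step: the approximate identity $\int_{j\delta}^{(j+1)\delta}\bm{b}(s)\cdot\nabla\!\int_{j\delta}^s\bm{b}(r)\cdot\nabla\xi_{j\delta}\,dr\,ds \approx \nu\delta\,\Delta\xi_{j\delta}$ demands careful analysis of the iterated OU integrals $\int_{j\delta}^{(j+1)\delta}\int_{j\delta}^s\eta^{\alpha,k}(s)\eta^{\alpha,k}(r)\,dr\,ds$ and a quantitative estimate on their deviation from $\tfrac{\alpha\delta}{2}$ in the regime $\delta\gg\alpha^{-1}$. Distributing the error between a piece controlled by $\|\theta\|_{\ell^\infty}$ (the martingale/fluctuation remainder, independent of $\alpha$) and a piece vanishing like $\alpha^{-\epsilon}$ (the bias of the iterated OU integrals) is the subtle bookkeeping that produces the dichotomous bound $\nu^{1+\gamma/2}\alpha^{-\epsilon}+\nu^{1/2}\|\theta\|_{\ell^\infty}$, and it is here that the constraint $\gamma\in(0,1/3)$ enters, through Lemma \ref{HHH} applied with $s_1+s_2-1>0$ and the available regularity of $\bm{b}(s)$ from Lemma \ref{lemma Eb}.
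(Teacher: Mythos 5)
Your proposal follows essentially the same route as the paper's Section 5: reduce via the equation to $-\int(\bm b\cdot\nabla\xi+\nu\Delta\xi)\,ds$, discretize and freeze $\xi$ on each subinterval, split $\int_{j\delta}^{(j+1)\delta}\bm b(s)\,ds$ into a Brownian-increment martingale (giving the $\nu^{1/2}\|\theta\|_{\ell^\infty}$ term via BDG/Kolmogorov) plus an $\alpha^{-1}$ boundary term, and extract $\nu\delta\Delta\xi_{j\delta}$ from the iterated OU integrals via the CONS identity, with the remaining residuals controlled by Lemmas \ref{E H^-1}--\ref{E H2-g} and \ref{lemma Eb} under the same parameter constraints. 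The outline correctly identifies the key cancellation and the main technical difficulty (the deviation of $\int\!\!\int\eta^{\alpha,k}(s)\eta^{\alpha,k}(r)\,dr\,ds$ from $\delta/2$ after normalization, handled in the paper by a Doob-martingale/conditional-expectation split), so it matches the paper's argument in all essentials.
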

	
	As the proof of Proposition \ref{main proposition} is very long, we postpone it to Section 5. We mention that some restrictions on the parameters $\alpha$ and $\delta$ are necessary in order to obtain a sufficiently small estimate, and one can find the specific details in Section \ref{subs-proof-Prop-3.3}.

	\section{Proofs of main results}
	
	This section consists of two parts which are devoted to the proofs of Theorems \ref{thm1} and \ref{thm2}, respectively.
	
	\subsection{Proof of Theorem \ref{thm1}}
	To prove Theorem \ref{thm1}, we first define a random distribution $f$ as follows:
	\begin{equation}\label{def of f}
		f_t =\xi_{t}-\xi_{0}-(\kappa+\nu)\int_{0}^{t} \Delta\xi_{s} \, ds +\int_{0}^{t} u_{s} \cdot \nabla \xi_{s}\, ds.
	\end{equation}
	If we replace $\xi$ by $\bar\xi$ and $u$ by $\bar u$, then the right-hand side vanishes; since we expect that $\xi$ is close to $\bar\xi$, the distribution $f$ would be small in suitable norms. We first prove a regularity estimate on $f$, which will be used in the proof of Proposition \ref{estimate on f}.
	
	\begin{lemma}\label{f Hbeta}
		For every $0\leq s<t \leq T$ and $\gamma>0$, it holds	
		\begin{equation}\nonumber
			\|f_t-f_s\|_{H^{-2}} \lesssim \|\xi_0\|^2_{L^{2}} \,  |t-s|+ \|\xi_0\|_{L^{2}} \int_{s}^{t} \|\bm{b}(r)\|_{H^{\gamma}} \, dr +(\kappa+\nu) \, \|\xi_0\|_{L^2}  \,|t-s| .
		\end{equation}
	\end{lemma}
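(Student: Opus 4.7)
The plan is to bound each of the three pieces in the identity
$$f_t - f_s = (\xi_t - \xi_s) - (\kappa+\nu) \int_s^t \Delta \xi_r \, dr + \int_s^t u_r \cdot \nabla \xi_r \, dr$$
separately in $H^{-2}$, by duality against smooth test functions $\phi$ with $\|\phi\|_{H^2} \leq 1$, using the energy estimate \eqref{priori estimates 1} to control $\|\xi_r\|_{L^2} \leq \|\xi_0\|_{L^2}$ uniformly in $r$.

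For the Laplacian piece, integration by parts immediately yields $\|\Delta \xi_r\|_{H^{-2}} \leq \|\xi_r\|_{L^2}$, which after integrating in time produces the $(\kappa+\nu)\|\xi_0\|_{L^2}|t-s|$ contribution. For the nonlinear piece I would use $\nabla \cdot u = 0$ to rewrite $u \cdot \nabla \xi = \nabla \cdot(u\xi)$, estimate by duality $|\langle u \cdot \nabla \xi, \phi \rangle| = |\langle u\xi, \nabla \phi \rangle| \leq \|u\|_{L^4}\|\xi\|_{L^2}\|\nabla \phi\|_{L^4}$, and then apply the Sobolev embedding $H^2(\T^2) \hookrightarrow W^{1,4}(\T^2)$ together with the Biot--Savart bound $\|u\|_{L^4} \lesssim \|u\|_{H^{1/2}} \lesssim \|\xi\|_{H^{-1/2}} \leq \|\xi\|_{L^2}$ to conclude $\|u_r \cdot \nabla \xi_r\|_{H^{-2}} \lesssim \|\xi_r\|_{L^2}^2$; this produces the $\|\xi_0\|_{L^2}^2 |t-s|$ contribution.

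For the remaining piece $\xi_t - \xi_s$, I would invoke the weak formulation from Definition \ref{solution of 1.1} and split it into three time-integrals. Two of them ($\kappa\int \Delta \xi\,dr$ and $\int u\cdot\nabla\xi\,dr$) are handled exactly as above and merely reproduce contributions already present in the stated bound; the only genuinely new term is the noise integral $\int_s^t \bm{b}(r) \cdot \nabla \xi_r \, dr$. Using $\nabla \cdot \bm{b} = 0$ to write $\bm{b} \cdot \nabla \xi = \nabla \cdot (\bm{b}\xi)$ and then applying Lemma \ref{HHH} with exponents $(s_1,s_2)=(\gamma,0)$ yields $\|\bm{b}\xi\|_{H^{\gamma-1}} \lesssim \|\bm{b}\|_{H^\gamma}\|\xi\|_{L^2}$; pairing this with $\nabla \phi \in H^{1-\gamma}$ (whose norm is controlled by $\|\phi\|_{H^2}$) then gives $\|\bm{b}(r) \cdot \nabla \xi_r\|_{H^{-2}} \lesssim \|\bm{b}(r)\|_{H^\gamma} \|\xi_0\|_{L^2}$, which after time-integration produces the middle term of the desired inequality.

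The main subtle point is extracting only the weak $H^\gamma$ norm of $\bm{b}$ in the noise contribution, with $\gamma>0$ arbitrarily small, rather than the stronger $\|\bm{b}\|_{L^\infty}$ or $\|\bm{b}\|_{H^{1+\gamma}}$ used in Lemmas \ref{E H^-1} and \ref{E H2-g}; this is exactly what the $H^{\gamma-1}$--$H^{1-\gamma}$ duality pairing (available thanks to the divergence-free structure) affords us, and besides this small bookkeeping the argument is routine.
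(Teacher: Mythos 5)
Your proposal is correct and follows essentially the same route as the paper: both decompose $f_t-f_s$ via the definition of $f$ together with the weak formulation of \eqref{SPDE}, bound $\Delta\xi_r$ trivially in $H^{-2}$, handle $u_r\cdot\nabla\xi_r$ by the divergence-free rewriting plus the $L^4$/Biot--Savart duality, and extract only $\|\bm{b}(r)\|_{H^{\gamma}}$ from the noise term via the divergence structure and the product estimate of Lemma \ref{HHH}. No gaps.
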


	\begin{proof}
		By \eqref{SPDE}, for every $s,t\in[0,T]$ and $s<t$, we have
		\begin{equation}\nonumber
			\xi_t -\xi_s =-\int_{s}^{t} u_r \cdot \nabla \xi_r \, dr -\int_{s}^{t} \bm{b}(r) \cdot \nabla \xi_r \, dr +\kappa \int_{s}^{t} \Delta \xi_r \, dr.
		\end{equation}
		Then we can further get
		\begin{equation}\label{H-2}
			\begin{split}
				\|\xi_t-\xi_s\|_{H^{-2}}&\leq \Big\|\int_{s}^{t} u_r \cdot \nabla \xi_r \, dr \Big\|_{H^{-2}} + \Big\|\int_{s}^{t} \bm{b}(r) \cdot \nabla \xi_r \, dr \Big\|_{H^{-2}} + \kappa \, \Big\|\int_{s}^{t} \Delta \xi_r \,dr\Big\|_{H^{-2}}\\
				&\lesssim \int_{s}^{t} \|u_r \cdot \nabla \xi_r \|_{H^{-2}} \, dr+ \int_{s}^{t}\|\bm{b}(r) \cdot \nabla \xi_r\|_{H^{-2}} \, dr +\kappa\int_{s}^{t} \| \xi_0\|_{L^{2}} \, dr.
			\end{split}
		\end{equation}	
		
		Now we will estimate the first and the second terms respectively. Using the divergence free property of $u$, we have
		\begin{equation}\nonumber
			\|u_r \cdot \nabla \xi_r\|_{H^{-2}} = \|\nabla \cdot (\xi_r u_r)\|_{H^{-2}}  \lesssim \|\xi_r u_r\|_{H^{-1}} ;
		\end{equation}
		besides, by H\"older's inequality and Sobolev embedding theorem, for $\phi \in C^{\infty}(\T^2)$,
		\begin{equation}\nonumber
			\big|\langle \xi_r u_r, \phi\rangle \big| \leq \|\xi_r\|_{L^{2}} \|u_r\|_{L^{4}} \|\phi\|_{L^{4}} \lesssim \|\xi_0\|_{L^{2}}  \|u_r\|_{H^{\frac{1}{2}}}  \|\phi\|_{H^{\frac{1}{2}}} \lesssim \|\xi_0\|^2_{L^{2}} \|\phi\|_{H^{1}} .
		\end{equation}
		Then we get $\|\xi_r u_r\|_{H^{-1}} \lesssim \|\xi_0\|^2_{L^{2}} $ and thus $\|u_r \cdot \nabla \xi_r\|_{H^{-2}} \lesssim \|\xi_0\|^2_{L^{2}}$.
		
		As for the next term, notice that $\bm{b}(r)$ is divergence free, we use the same method as above to estimate it: for $\gamma>0$, we have
		\begin{equation}\nonumber	
			\|\bm{b}(r) \cdot \nabla \xi_r\|_{H^{-2}} \lesssim \|\xi_r \, \bm{b}(r) \|_{H^{-1}} \lesssim \|\xi_0\|_{L^{2}} \|\bm{b}(r)\|_{H^{\gamma}}.
		\end{equation}
		Having the above results at hand, we combine \eqref{H-2} with \eqref{def of f} and arrive at
			\begin{equation}\nonumber
			\begin{split}
				\|f_t-f_s\|_{H^{-2}} &\leq \|\xi_t-\xi_s\|_{H^{-2}} + (\kappa+\nu) \, \int_{s}^{t} \| \Delta \xi_r\|_{H^{-2}}  \, dr + \int_{s}^{t} \|u_r \cdot \nabla \xi_r \|_{H^{-2}}  \, dr \\
				&\leq \int_{s}^{t} \|u_r \cdot \nabla \xi_r \|_{H^{-2}} \, dr+ \int_{s}^{t}\|\bm{b}(r) \cdot \nabla \xi_r\|_{H^{-2}} \, dr + (\kappa+\nu) \int_{s}^{t} \|\xi_0\|_{L^2} \, dr\\
				&\lesssim \|\xi_0\|^2_{L^{2}} \, |t-s|  + \|\xi_0\|_{L^{2}} \int_{s}^{t} \|\bm{b}(r)\|_{H^{\gamma}} \, dr +  (\kappa+\nu)  \|\xi_0\|_{L^{2}} \, |t-s| .
			\end{split}
		\end{equation}
	\end{proof}

	\begin{proposition}\label{estimate on f}
		Let $\beta>3$, $\gamma \in (0, \frac{1}{3})$ and $T\geq 1$, then there are $\rho\in(0,\frac{1}{4})$ and $\epsilon>0$ such that for every $\alpha$ sufficiently large, it holds
		\begin{equation}\nonumber
			\mathbb{E}\Big[\|f\|_{C^{\rho}([0,T],H^{-\beta})}\Big] \lesssim T \|\xi_0\|_{L^2} \big(1+\|\xi_0\|_{L^{2}}\big)^2 \big(\nu^{1+\frac{\gamma}{2}}\alpha^{-\epsilon}  +\nu^{\frac{1}{2}}\|\theta\|_{\ell^{\infty}}\big).
		\end{equation}
	\end{proposition}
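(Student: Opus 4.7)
The plan is to upgrade the discrete-in-time estimate of Proposition \ref{main proposition}, which controls the weighted increment $\|f_{n\delta}-f_{m\delta}\|_{H^{-\beta}}/(|n-m|\delta)^{\rho}$ only along the grid $\{k\delta : 1\le k\le T/\delta-1\}$, to a genuine H\"older bound in time by using Lemma \ref{f Hbeta} on the short intervals of length less than $\delta$. Since $f_0=0$ by \eqref{def of f}, one has $\|f\|_{C^{\rho}([0,T],H^{-\beta})}\le (1+T^{\rho})\,[f]_{C^{\rho}([0,T],H^{-\beta})}$, so the task reduces to bounding the H\"older seminorm in expectation.

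For arbitrary $0\le s<t\le T$, the plan is to choose (when possible) grid points $m\delta, n\delta$ with $1\le m\le n\le T/\delta-1$, $s\le m\delta\le n\delta\le t$, and $m\delta-s,\, t-n\delta<\delta$, and to split
$$ f_t-f_s=(f_t-f_{n\delta})+(f_{n\delta}-f_{m\delta})+(f_{m\delta}-f_s).$$
The middle piece is controlled by Proposition \ref{main proposition}, yielding a bound of the form $(|n-m|\delta)^{\rho}X_{\alpha}$ where $\mathbb{E}X_{\alpha}\lesssim T\|\xi_0\|_{L^2}(1+\|\xi_0\|_{L^2})^2(\nu^{1+\gamma/2}\alpha^{-\epsilon}+\nu^{1/2}\|\theta\|_{\ell^{\infty}})$; since $|n-m|\delta\le |t-s|$, dividing by $|t-s|^{\rho}$ is harmless. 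The two end pieces (and, in the regime $|t-s|<\delta$, the whole increment $f_t-f_s$) are estimated by Lemma \ref{f Hbeta} combined with the embedding $H^{-2}\hookrightarrow H^{-\beta}$ (valid since $\beta>3>2$): each such increment spans a time interval of length at most $\delta$, so dividing its $H^{-\beta}$-size by $|t-s|^{\rho}$ leaves a prefactor $\delta^{1-\rho}$ multiplied by $\|\xi_0\|_{L^2}^2+(\kappa+\nu)\|\xi_0\|_{L^2}+\|\xi_0\|_{L^2}\sup_{r\in [0,T]}\|\bm{b}(r)\|_{H^{\gamma}}$. Taking expectation, the last term is bounded via Lemma \ref{Eb sup lemma} by $\nu^{1/2}\alpha^{1/2}\log^{1/2}(1+\alpha T)\|\xi_0\|_{L^2}$.

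To close the argument, the plan is to choose $\delta$ as a small negative power of $\alpha$, so that $\delta^{1-\rho}\alpha^{1/2}$ (after absorbing the logarithmic factor) is dominated by $\nu^{1/2+\gamma/2}\alpha^{-\epsilon}$; this also determines the admissible values of $\rho\in (0,1/4)$ and $\epsilon>0$ appearing in the statement. The main obstacle is to ensure that this choice of $\delta$ is compatible with the constraints already imposed by Proposition \ref{main proposition} (whose proof, via Lemmas \ref{E H^-1} and \ref{E H2-g}, requires $\delta\alpha\gtrsim 1$ and $\delta^{4}\alpha^{3}\lesssim 1$), so that a nontrivial window of admissible $\delta$ exists once $\alpha$ is large; this is where the hypothesis ``$\alpha$ sufficiently large'' enters quantitatively. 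The remaining minor boundary cases, where the required grid point would fall outside $\{1,\dots,T/\delta-1\}$, can be absorbed into the short-increment bound by taking $m=1$ or $n=T/\delta-1$ and paying at most an additional factor of $2\delta$ in the length of the affected end piece.
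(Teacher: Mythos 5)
Your proposal is correct and follows essentially the same route as the paper: one splits into the regimes $|t-s|\le\delta$ (handled via Lemma \ref{f Hbeta} and the embedding $H^{-2}\hookrightarrow H^{-\beta}$) and $|t-s|>\delta$ (decomposed at grid points, with the middle piece controlled by Proposition \ref{main proposition}), and then chooses $\delta$ as a negative power of $\alpha$ compatible with the constraints $\delta\alpha\gtrsim 1$, $\delta^4\alpha^3\lesssim 1$. The only minor difference is that for the short-interval contribution of $\bm{b}$ the paper applies H\"older in time with exponent $1/\rho$ together with Lemma \ref{lemma Eb} instead of bounding by $\delta\sup_{r}\|\bm{b}(r)\|_{H^{\gamma}}$ via Lemma \ref{Eb sup lemma}; this avoids the $\log^{1/2}(1+\alpha T)$ factor, though your version is equally valid since the logarithm is absorbed by the negative power of $\alpha$.
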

	
	\begin{proof}
		First, as $f_0=0$, we give an equivalent norm as follows:
		\begin{equation}\label{equivalent seminorm}
			\|f\|_{C^{\rho}([0,T],H^{-\beta})} \sim 	\sup_{0\leq s<t \leq T} \frac{\|f_t-f_s\|_{H^{-\beta}} }{|t-s|^{\rho}}.
		\end{equation}
		Then we will prove Proposition \ref{estimate on f}  in the following two cases.
		
		\textbf{Case 1:} $|t-s| \leq \delta$. By H\"older's inequality, for $\rho \in (0,\frac{1}{4})$, we have
		\begin{equation}\nonumber
			\int_{s}^{t}\|\bm{b}(r)\|_{H^{\gamma}} \,dr \leq \bigg(\int_{s}^{t}1 \, dr\bigg)^{1-\rho} \bigg(\int_{s}^{t} \|\bm{b}(r)\|^{\frac{1}{\rho}}_{H^{\gamma}} \, dr\bigg)^{\rho} \leq |t-s|^{1-\rho} \bigg(\int_{s}^{t} \|\bm{b}(r)\|^{\frac{1}{\rho}}_{H^{\gamma}} \, dr\bigg)^{\rho}.
		\end{equation}
		Furthermore, we can get
		\begin{equation}\nonumber
			\mathbb{E}\Bigg[\sup_{\substack{0\leq s<t \leq T \\|t-s|\leq\delta}}
			\frac{\int_{s}^{t}\|\bm{b}(r)\|_{H^{\gamma}} \, dr }{|t-s|^{\rho}}\Bigg]\leq \delta^{1-2\rho} \, \mathbb{E} \bigg(\int_{0}^{T} \|\bm{b}(r)\|^{\frac{1}{\rho}}_{H^{\gamma}} \, dr\bigg)^{\rho} \lesssim  \delta^{1-2\rho} \nu^{\frac{1}{2}} \alpha^{\frac{1}{2}} \, T^{\rho} C^{\rho}_{\theta,\gamma,1/\rho} \, ,
		\end{equation}
		where in the last step we have used Lemma \ref{lemma Eb}. Then for $\beta>3$, Lemma \ref{f Hbeta} yields
		\begin{equation}\label{Case 1 result}
			\begin{split}
				\mathbb{E}\Bigg[\sup_{\substack{0\leq s<t \leq T \\ |t-s|\leq\delta}}\frac{\|f_t-f_s\|_{H^{-\beta}} }{|t-s|^{\rho}}\Bigg] &\lesssim \delta^{1-\rho}  \, \|\xi_0\|^2_{L^{2}}  + \delta^{1-2\rho} \nu^{\frac{1}{2}} \alpha^{\frac{1}{2}} \, T^{\rho} C^{\rho}_{\theta,\gamma,1/\rho}  \,  \|\xi_0\|_{L^{2}}  + (\kappa+\nu) \,\delta^{1-\rho} \, \|\xi_0\|_{L^{2}}  \\
				&\lesssim \delta^{1-2\rho} \nu^{\frac{1}{2}} \alpha^{\frac{1}{2}} \, T^{\rho} C^{\rho}_{\theta,\gamma,1/\rho}  \, \|\xi_0\|_{L^{2}} \big(1+\|\xi_0\|_{L^{2}}\big) .
			\end{split}
		\end{equation}
		
		\textbf{Case 2:} $|t-s| > \delta$. We suppose $s \in [(m-1)\delta, m\delta)$ and $t\in(n\delta, (n+1)\delta]$, where $n,m \in \mathbb{N}$ and $m\leq n$. Hence, if $n>m$, we have
		\begin{equation}\nonumber
			\frac{\|f_t-f_s\|_{H^{-\beta}} }{|t-s|^{\rho}} \leq \frac{\|f_t-f_{n\delta}\|_{H^{-\beta}} }{|t-n\delta|^{\rho}}+\frac{\|f_{n\delta}-f_{m\delta}\|_{H^{-\beta}} }{|n\delta-m\delta|^{\rho}}+\frac{\|f_{m\delta}-f_s\|_{H^{-\beta}} } {|m\delta-s|^{\rho}},
		\end{equation}
		while for $n=m$, the following formula holds:
		\begin{equation}\nonumber
			\frac{\|f_t-f_s\|_{H^{-\beta}} }{|t-s|^{\rho}} \leq \frac{\|f_t-f_{n\delta}\|_{H^{-\beta}} }{|t-n\delta|^{\rho}}+\frac{\|f_{n\delta}-f_s\|_{H^{-\beta}} }{|n\delta-s|^{\rho}}.
		\end{equation}
		
		Then we can combine the above two cases and get
		\begin{equation}\label{combination of 2 cases}
			\sup_{0\leq s<t \leq T} \frac{\|f_t-f_s\|_{H^{-\beta}} }{|t-s|^{\rho}} \lesssim \sup_{\substack{0\leq s<t \leq T \\ |t-s|\leq\delta}} \frac{\|f_t-f_s\|_{H^{-\beta}} }{|t-s|^{\rho}} +\sup_{1 \leq m<n \leq T/\delta-1} \frac{\|f_{n\delta}-f_{m\delta}\|_{H^{-\beta}} }{|n\delta-m\delta|^{\rho}}.
		\end{equation}
		By the definition of $f$ and Proposition \ref{main proposition}, for every $\beta>3$, it holds
		\begin{equation}\label{Case 2 result}
			\mathbb{E}\bigg[\sup_{1\leq m<n \leq T/\delta-1} \frac{\|f_{n\delta}-f_{m\delta}\|_{H^{-\beta}} }{|n\delta-m\delta|^{\rho}}\bigg] \lesssim T \|\xi_0\|_{L^2} \big(1+\|\xi_0\|_{L^{2}}\big)^2 \big(\nu^{1+\frac{\gamma}{2}}\alpha^{-\epsilon}  +\nu^{\frac{1}{2}}\|\theta\|_{\ell^{\infty}}\big) .
		\end{equation}
		Taking into account \eqref{Case 1 result}--\eqref{Case 2 result} and noticing the restrictions on the parameters in Proposition \ref{main proposition}, we use \eqref{equivalent seminorm} to complete the proof of Proposition \ref{estimate on f}.
	\end{proof}

	Now we give the following proposition which indicates that $f$ is bounded in $H^{-1}$.
	\begin{proposition}\label{sup f H-1}
		Suppose $f$ is defined as in \eqref{def of f}, then for $\gamma \in (0,1)$ and $T\geq 1$, it holds	
		\begin{equation}\nonumber
			\sup_{t\in [0,T]} \|f_t\|_{H^{-1}} \lesssim  T^{1-\frac{\gamma}{2}}  \|\xi_0\|_{L^{2}} \big(1+ \|\xi_0\|_{L^{2}}\big) \big(\nu \kappa^{-\frac{1}{2}}+\kappa^{-\frac{\gamma}{2}}\big) .
		\end{equation}
	\end{proposition}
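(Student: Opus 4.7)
The plan is to work directly from the definition \eqref{def of f} of $f_t$, rather than from the simpler equivalent form $f_t = -\nu\int_0^t \Delta\xi_s\,ds - \int_0^t \bm{b}(s)\cdot\nabla\xi_s\,ds$ that one obtains by substituting the equation \eqref{SPDE}. The reason is that the target estimate is pathwise and does not involve any norm of $\bm{b}$, so it is simpler to avoid the random field entirely. By the triangle inequality together with the standard bounds $\|\xi_t\|_{H^{-1}}\leq \|\xi_t\|_{L^2}$ and $\|\Delta\phi\|_{H^{-1}}\leq \|\nabla\phi\|_{L^2}$,
$$\|f_t\|_{H^{-1}} \leq \|\xi_t\|_{L^2} + \|\xi_0\|_{L^2} + (\kappa+\nu)\int_0^t \|\nabla\xi_s\|_{L^2}\,ds + \int_0^t \|u_s\cdot\nabla\xi_s\|_{H^{-1}}\,ds.$$

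The first two terms are controlled by $2\|\xi_0\|_{L^2}$ thanks to \eqref{priori estimates 1}. For the Laplacian term, Cauchy-Schwarz in time combined with \eqref{priori estimates 1} yields $(\kappa+\nu)\int_0^t \|\nabla\xi_s\|_{L^2}\,ds \lesssim (\kappa+\nu)T^{1/2}\kappa^{-1/2}\|\xi_0\|_{L^2} = (\kappa^{1/2}+\nu\kappa^{-1/2})T^{1/2}\|\xi_0\|_{L^2}$, which for $T\geq 1$ and $\kappa$ small is absorbed into $T^{1-\gamma/2}(\nu\kappa^{-1/2}+\kappa^{-\gamma/2})\|\xi_0\|_{L^2}$ (using $\kappa^{1/2}\leq \kappa^{-\gamma/2}$ when $\kappa\leq 1$, and $T^{1/2}\leq T^{1-\gamma/2}$ when $\gamma<1$).

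The delicate piece is the nonlinear term. Since $u_s$ is divergence free, $u_s\cdot\nabla\xi_s = \nabla\cdot(u_s\xi_s)$, hence $\|u_s\cdot\nabla\xi_s\|_{H^{-1}} \lesssim \|u_s\xi_s\|_{L^2}$. Using Biot-Savart to gain one derivative ($\|u_s\|_{H^{1+\gamma}}\lesssim \|\xi_s\|_{H^\gamma}$) and the 2D Sobolev embedding $H^{1+\gamma}(\T^2)\hookrightarrow L^\infty(\T^2)$,
$$\|u_s\xi_s\|_{L^2} \leq \|u_s\|_{L^\infty}\|\xi_s\|_{L^2} \lesssim \|\xi_s\|_{H^\gamma}\|\xi_s\|_{L^2}.$$
Interpolation (Lemma \ref{interpolation}) gives $\|\xi_s\|_{H^\gamma}\leq \|\xi_s\|^{1-\gamma}_{L^2}\|\xi_s\|^{\gamma}_{H^1}$; together with \eqref{priori estimates 1} for the $L^2$ factors this yields $\|u_s\xi_s\|_{L^2} \lesssim \|\xi_0\|^{2-\gamma}_{L^2}\|\xi_s\|^{\gamma}_{H^1}$. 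Integrating and applying Hölder in time with exponents $2/\gamma$ and $2/(2-\gamma)$,
$$\int_0^t \|u_s\cdot\nabla\xi_s\|_{H^{-1}}\,ds \lesssim \|\xi_0\|^{2-\gamma}_{L^2}\, T^{1-\gamma/2}\Big(\int_0^T \|\xi_s\|^2_{H^1}\,ds\Big)^{\gamma/2} \lesssim T^{1-\gamma/2}\kappa^{-\gamma/2}\|\xi_0\|^{2}_{L^2},$$
where we used \eqref{priori estimates 1} once more. Summing up the contributions, the linear and quadratic dependence on $\|\xi_0\|_{L^2}$ combine into the factor $\|\xi_0\|_{L^2}(1+\|\xi_0\|_{L^2})$, and the final bound takes the required form.

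The main obstacle is the nonlinear estimate, specifically obtaining the sharp power $\kappa^{-\gamma/2}$ rather than the coarser $\kappa^{-1/2}$ one would get from a straight Cauchy-Schwarz. This requires calibrating the interpolation parameter $\gamma$ so that the amount of $H^1$ regularity extracted from $\xi_s$ matches exactly the time integrability provided by the energy estimate, and simultaneously yields the time factor $T^{1-\gamma/2}$ announced in the proposition.
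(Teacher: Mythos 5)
Your proposal is correct and follows essentially the same route as the paper: the same triangle-inequality decomposition of $\|f_t\|_{H^{-1}}$ directly from \eqref{def of f}, the same Cauchy--Schwarz in time for the Laplacian term, and the same treatment of the nonlinear term via $\|u_s\cdot\nabla\xi_s\|_{H^{-1}}\lesssim\|u_s\xi_s\|_{L^2}\lesssim\|\xi_0\|_{L^2}\|\xi_s\|_{H^{\gamma}}$ followed by interpolation and H\"older with exponents $2/\gamma$ and $2/(2-\gamma)$. Your explicit absorption of the lower-order terms into $T^{1-\gamma/2}(\nu\kappa^{-1/2}+\kappa^{-\gamma/2})$ is a slightly more detailed version of the paper's closing remark, but the argument is identical in substance.
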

	
	\begin{proof}
		According to \eqref{def of f}, the following formula holds:
		\begin{equation}\label{f H-1 norm}
			\begin{split}
				\|f_t\|_{H^{-1}} &\leq \|\xi_t-\xi_0\|_{H^{-1}} + (\kappa+\nu) \int_{0}^{t} \|\Delta \xi_s\|_{H^{-1}} \, ds +\int_{0}^{t} \|u_s \cdot \nabla \xi_s\|_{H^{-1}} \, ds\\
				&\lesssim 2\|\xi_0\|_{L^{2}} + (\kappa+\nu) \int_{0}^{t} \|\xi_s\|_{H^{1}} \, ds + \int_{0}^{t} \|u_s \cdot \nabla \xi_s\|_{H^{-1}} \, ds.
			\end{split}
		\end{equation}
		Applying H\"older's inequality and \eqref{priori estimates 1}, we have
		\begin{equation}\label{prop 5.3 1st}
			\int_{0}^{t} \|\xi_s\|_{H^{1}}\, ds \leq \Big(\int_{0}^{t} 1 \, ds\Big)^{\frac{1}{2}} \Big(\int_{0}^{t} \|\xi_s\|^2_{H^{1}} \, ds\Big)^{\frac{1}{2}} \lesssim \kappa^{-\frac{1}{2}} \, t^{\frac{1}{2}} \, \|\xi_0\|_{L^{2}} .
		\end{equation}
		
		As for the last term, notice that for $\gamma \in (0,1)$, the following inequality holds:
		\begin{equation}\nonumber
			\|u_s \cdot \nabla \xi_s\|_{H^{-1}} \lesssim \| \xi_s \, u_s \|_{L^{2}} \leq \|\xi_s\|_{L^{2}} \, \|u_s\|_{L^{\infty}} \lesssim \|\xi_0\|_{L^{2}} \, \|\xi_s\|_{H^{\gamma}} ,
		\end{equation}
		then we can use Lemma \ref{interpolation} and \eqref{priori estimates 1} to further get
		\begin{equation}\label{prop 5.3 2nd}
			\begin{split}
				\int_{0}^{t} \|u_s \cdot \nabla \xi_s\|_{H^{-1}} \, ds &\lesssim \|\xi_0\|_{L^{2}} \int_{0}^{t} \|\xi_s\|^{\gamma}_{H^{1}} \, \|\xi_s\|^{1-\gamma}_{L^{2}} \, ds\\
				&\leq \|\xi_0\|^{2-\gamma}_{L^{2}} \Big(\int_{0}^{t} 1 \, ds\Big)^{1-\frac{\gamma}{2}} \Big(\int_{0}^{t} \|\xi_s\|^2_{H^{1}} \, ds\Big)^{\frac{\gamma}{2}} \\
				&\lesssim \kappa^{-\frac{\gamma}{2}} \, t^{1-\frac{\gamma}{2}} \, \|\xi_0\|^{2}_{L^{2}} .
			\end{split}
		\end{equation}
		
		Inserting \eqref{prop 5.3 1st} and \eqref{prop 5.3 2nd} into \eqref{f H-1 norm},  we take supremum and deduce
		\begin{equation}\nonumber
			\sup_{t\in [0,T]} \|f_t\|_{H^{-1}} \lesssim2\|\xi_0\|_{L^{2}} + (\kappa^{\frac{1}{2}}+\nu \kappa^{-\frac{1}{2}}) \, T^{\frac{1}{2}} \|\xi_0\|_{L^{2}} + \kappa^{-\frac{\gamma}{2}} T^{1-\frac{\gamma}{2}}  \|\xi_0\|^{2}_{L^{2}} .
		\end{equation}
		The proposition follows due to the choices of parameters.
	\end{proof}

	With the above preparations in mind, we can prove the first main theorem now.
	
	\begin{proof}[Proof of Theorem \ref{thm1}]
		By the definition of $f$, we have
		\begin{equation}\nonumber
			\xi_t=\xi_0+ (\kappa+\nu) \int_{0}^{t} \Delta \xi_s \, ds - \int_{0}^{t} u_s \cdot \nabla \xi_s \, ds +f_t,
		\end{equation}
		while by \eqref{PDE}, it holds
		\begin{equation}\nonumber
			\bar{\xi}_t=\xi_0+(\kappa+\nu) \int_{0}^{t} \Delta \bar{\xi}_s \, ds - \int_{0}^{t} \bar{u}_s \cdot \nabla \bar{\xi}_s \, ds .
		\end{equation}
		Define
		$$X_t:=f_t-\int_{0}^{t}(u_s\cdot\nabla \xi_s -\bar{u}_s \cdot \nabla \bar{\xi}_s ) \, ds,$$
		then the difference $\xi -\bar{\xi}$ satisfies
		\begin{equation}\label{difference of 2 solutions}
			\xi_t-\bar{\xi}_t=(\kappa+\nu) \int_{0}^{t} \Delta (\xi_s-\bar{\xi}_s) ds + X_t.
		\end{equation}
		
		We first prove the theorem in the Sobolev spaces $H^{-\tilde{\vartheta}}$ with $\tilde{\vartheta}>1$; without loss of generality we can assume $\tilde{\vartheta} \in (1,\frac{3}{2})$. Recall that we have already obtained in Proposition \ref{estimate on f} an estimate on $\|f\|_{C^{\rho}([0,T],H^{-\beta})} $ for $\beta>3$; we shall fix such a $\beta$ in the sequel. Besides, Proposition \ref{sup f H-1} gives us a bound on $\sup_{t\in [0,T]}\|f_t\|_{H^{-1}}$, hence by Lemma \ref{interpolation}, for any $\tilde{\vartheta} \in (1,\frac{3}{2})$, there exists $\zeta \in (0,1)$ satisfying $\beta\zeta +(1-\zeta )=\tilde{\vartheta}$, such that
		$$\|f_t-f_s\|_{H^{-\tilde{\vartheta}}} \leq \|f_t-f_s\|^{1-\zeta}_{H^{-1}} \, \|f_t-f_s\|^{\zeta }_{H^{-\beta}} .$$
		Furthermore, for $\rho \in (0,\frac{1}{4})$, we can calculate $\|f\|_{C^{\rho\zeta}([0,T],H^{-\tilde{\vartheta}})} $ as
		\begin{equation}\label{thm1 interpolation}
			\begin{split}
				\|f\|_{C^{\rho\zeta}([0,T],H^{-\tilde{\vartheta}})} & \sim  \sup_{0\leq s<t \leq T} \frac{\|f_t-f_s\|_{H^{-\tilde{\vartheta}}} }{|t-s|^{\rho\zeta}} \\
				&\leq \sup_{0\leq s<t \leq T} \frac{\|f_t-f_s\|^{1-\zeta}_{H^{-1}} \, \|f_t-f_s\|^{\zeta }_{H^{-\beta}} }{|t-s|^{\rho \zeta}}\\
				& \lesssim \Big(\sup_{t\in [0,T]} \|f_t\|^{1-\zeta }_{H^{-1}} \Big) \,  \|f\|^{\zeta }_{C^{\rho}([0,T],H^{-\beta})} .
			\end{split}
		\end{equation}
Proposition \ref{estimate on f} implies that, $\P$-a.s., $f\in C^{\rho\zeta}([0,T],H^{-\tilde{\vartheta}})$. Next, we have
  $$\aligned \Big\|\int_{0}^{t} u_r\cdot\nabla \xi_r\,d r - \int_{0}^{s} u_r\cdot\nabla \xi_r\,d r \Big\|_{H^{-\tilde{\vartheta}}} \le \int_s^t \|u_r\cdot\nabla \xi_r \|_{H^{-\tilde{\vartheta}}} \, dr \lesssim \int_s^t \|u_r \xi_r \|_{H^{1-\tilde{\vartheta}}} \, dr
  \endaligned $$
and by Lemma \ref{HHH}, $\|u_r \xi_r \|_{H^{1-\tilde{\vartheta}}} \lesssim \|\xi_r \|_{L^2} \|u_r \|_{H^{2-\tilde{\vartheta}}} \lesssim \|\xi_r \|_{L^2}^2 \le \|\xi_0 \|_{L^2}^2$; thus, the function $t\mapsto \int_{0}^{t} u_r\cdot\nabla \xi_r\,d r\in H^{-\tilde{\vartheta}}$ is Lipschitz continuous. With slightly more effort, one can show that it actually belongs to $C^1([0,T],H^{-\tilde{\vartheta}})$ by using the fact $\xi\in C([0,T], L^2)$; similar result holds for $t\mapsto \int_{0}^{t} \bar u_r\cdot\nabla \bar\xi_r\,d r\in H^{-\tilde{\vartheta}}$.

Summarizing the above arguments, for every $\tilde{\vartheta} \in (1,\frac{3}{2})$ and $\rho\in (0,\frac{1}{4})$, we deduce that, $\P$-a.s., $X \in C^{\rho\zeta}([0,T],H^{-\tilde{\vartheta}})$ for all $T\geq 1$, where $\zeta\in (0,1)$ is defined as above.
		Therefore, according to \cite[Theorem 1]{Young integral}, there exists a linear map $\mathcal{G}$, such that
		\begin{equation}\label{linear decompose}
			\xi-\bar{\xi}=\mathcal{G}(X)=\mathcal{G}(f)-\mathcal{G}\Big(\int_{0}^{\cdot}(u_s\cdot\nabla \xi_s -\bar{u}_s \cdot \nabla \bar{\xi}_s ) \, ds\Big);
		\end{equation}
		furthermore,  the following result holds:
		\begin{equation}\label{prf of thm 1 1st conclusion}
			\sup_{t\in [0,T]}\|\mathcal{G}(f_t)\|_{H^{-\tilde{\vartheta}}} \lesssim \|f\|_{C^{\rho\zeta}([0,T],H^{-\tilde{\vartheta}})} .
		\end{equation}

		Now we will deal with the last term in \eqref{linear decompose}. As $\int_{0}^{\cdot}(u_s\cdot\nabla \xi_s -\bar{u}_s \cdot \nabla \bar{\xi}_s ) \, ds$ belongs to $C^1([0,T],H^{-\tilde{\vartheta}})$, then by \cite[Theorem 1]{Young integral},  it holds
		\begin{equation}\label{nonlinear decompose}
			\begin{split}
				&\quad \ \mathcal{G}\Big(\int_{0}^{\cdot}(u_s\cdot\nabla \xi_s -\bar{u}_s \cdot \nabla \bar{\xi}_s ) \, ds \Big)(t)\\
				&=\int_{0}^{t} e^{(\kappa+\nu)(t-s)\Delta} (u_s\cdot\nabla \xi_s -\bar{u}_s \cdot \nabla \bar{\xi}_s ) \, ds\\
				&=\int_{0}^{t} e^{(\kappa+\nu)(t-s)\Delta} \big[(u_s-\bar{u}_s)\cdot \nabla \xi_s \big] ds+ \int_{0}^{t} e^{(\kappa+\nu)(t-s)\Delta} \big[\bar{u}_s \cdot \nabla (\xi_s - \bar{\xi}_s)\big]ds.
			\end{split}
		\end{equation}
		For the first term, we can use the standard heat kernel estimate (see e.g. \cite[Section 2]{FLD quantitative}) to get
		\begin{equation}\label{quantitative lemma}
			\Big\|\int_{0}^{t} e^{(\kappa+\nu)(t-s)\Delta} \big[(u_s-\bar{u}_s)\cdot \nabla \xi_s \big] ds\Big\|^2_{H^{-\tilde{\vartheta}}} \lesssim \frac1{\kappa+\nu} \int_{0}^{t} \| (u_s-\bar{u}_s)\cdot \nabla \xi_s \|^2_{H^{-\tilde{\vartheta}-1}} \, ds.
		\end{equation}
		Noting that $\| (u_s-\bar{u}_s)\cdot \nabla \xi_s \|_{H^{-\tilde{\vartheta}-1}}=\| (u_s-\bar{u}_s) \, \xi_s \|_{H^{-\tilde{\vartheta}}}$, then for $\tilde{\vartheta} \in (1,\frac{3}{2})$ and $\phi\in C^\infty(\T^2)$, Lemma \ref{HHH} yields
		\begin{equation}\nonumber
			\big|\langle (u_s-\bar{u}_s)\, \xi_s , \phi \rangle \big| \leq \|u_s-\bar{u}_s\|_{H^{1-\tilde{\vartheta}}} \|\xi_s  \phi \|_{H^{\tilde{\vartheta}-1}} \lesssim \|\xi_s-\bar{\xi}_s\|_{H^{-\tilde{\vartheta}}} \|\xi_s\|_{H^{\frac{1}{2}}} \|\phi\|_{H^{\tilde{\vartheta}-\frac{1}{2}}}.
		\end{equation}
		Therefore,  by duality of Sobolev norms and Lemma \ref{interpolation},
		\begin{equation}\nonumber
			\| (u_s-\bar{u}_s)\, \xi_s \|_{H^{-\tilde{\vartheta}}} \lesssim \|\xi_s-\bar{\xi}_s\|_{H^{-\tilde{\vartheta}}} \|\xi_s\|_{H^{\frac{1}{2}}} \lesssim \|\xi_s-\bar{\xi}_s\|_{H^{-\tilde{\vartheta}}} \|\xi_s\|^{\frac{1}{2}}_{H^{1}} \|\xi_0\|^{\frac{1}{2}}_{L^{2}}.
		\end{equation}
		Applying the above result to \eqref{quantitative lemma}, we obtain
		\begin{equation}\label{nonlinear 1st term}
			\Big\|\int_{0}^{t} e^{(\kappa+\nu)(t-s)\Delta} \big[(u_s-\bar{u}_s)\cdot \nabla \xi_s \big] ds\Big\|^2_{H^{-\tilde{\vartheta}}}  \lesssim \frac1{\kappa+\nu} \int_{0}^{t} \|\xi_s-\bar{\xi}_s\|^2_{H^{-\tilde{\vartheta}}}  \|\xi_0\|_{L^{2}} \|\xi_s\|_{H^{1}} \, ds .
		\end{equation}	
		
		The latter term of \eqref{nonlinear decompose} can be treated similarly as follows:
		\begin{equation}\label{quantitative lemma 2}
			\Big\|\int_{0}^{t} e^{(\kappa+\nu)(t-s)\Delta} \big[\bar{u}_s \cdot \nabla( \xi_s -\bar{\xi}_s) \big] ds\Big\|^2_{H^{-\tilde{\vartheta}}} \lesssim \frac1{\kappa+\nu} \int_{0}^{t} \big\|\bar{u}_s \, (\xi_s - \bar{\xi}_s)  \big\|^2_{H^{-\tilde{\vartheta}}} \, ds.
		\end{equation}
		Let $\phi\in C^\infty(\T^2)$ be a test function; for any fixed $s \in [0,T]$, we denote $A(\bar{u}_s \phi ):=\int_{\mathbb{T}^2} (\bar{u}_s \phi)(x) \, dx$. As $\int_{\mathbb{T}^2} (\xi_s - \bar{\xi}_s) (x) \, dx=0$, we have $\langle  \xi_s - \bar{\xi}_s, A(\bar{u}_s \phi )  \rangle =0$, and therefore
		\begin{equation}\nonumber
			\big|\langle \bar{u}_s \, (\xi_s - \bar{\xi}_s), \phi \rangle \big|	= \big|\langle  \xi_s - \bar{\xi}_s, \bar{u}_s \phi-A(\bar{u}_s \phi ) \rangle \big| \leq \|\xi_s - \bar{\xi}_s\|_{H^{-\tilde{\vartheta}}}  \| \bar{u}_s \phi-A(\bar{u}_s \phi ) \|_{H^{\tilde{\vartheta}}} .
		\end{equation}
		By Poincar\'e's inequality, Lemmas \ref{HHH} and \ref{HCH}, for $ \tilde{\vartheta}\in (1,\frac{3}{2})$ and $\varepsilon \in (0,\frac{1}{2})$, we get
		\begin{equation}\nonumber
			\begin{split}
				\|\bar{u}_s \phi-A(\bar{u}_s \phi ) \|_{H^{\tilde{\vartheta}}} \lesssim \|\nabla(\bar{u}_s \phi)\|_{H^{\tilde{\vartheta}-1}} &\leq \|(\nabla \bar{u}_s) \phi\|_{H^{\tilde{\vartheta}-1}} + \|\bar{u}_s \cdot \nabla \phi\|_{H^{\tilde{\vartheta}-1}} \\
				&\lesssim \|\nabla \bar{u}_s\|_{H^{\frac{1}{2}}} \|\phi\|_{H^{\tilde{\vartheta}-\frac{1}{2}}} + \| \bar{u}_s \|_{C^{\tilde{\vartheta}-1+\varepsilon}}  \|\nabla\phi\|_{H^{\tilde{\vartheta}-1}}\\
				&\lesssim  \|\bar{u}_s\|_{H^{2}} \|\phi\|_{H^{\tilde{\vartheta}}}.
			\end{split}
		\end{equation}
		Summarizing these arguments leads to
		\begin{equation}\nonumber
			\|\bar{u}_s \, ( \xi_s - \bar{\xi}_s)\|_{H^{-\tilde{\vartheta}}} \lesssim  \|\xi_s - \bar{\xi}_s\|_{H^{-\tilde{\vartheta}}}  \|\bar{\xi}_s\|_{H^{1}} .
		\end{equation}
		Inserting the above estimate to \eqref{quantitative lemma 2}, we obtain
		\begin{equation}\label{nonlinear 2nd term}
			\Big\|\int_{0}^{t} e^{(\kappa+\nu)(t-s)\Delta} \big[\bar{u}_s \cdot \nabla( \xi_s -\bar{\xi}_s) \big] ds\Big\|^2_{H^{-\tilde{\vartheta}}} \lesssim \frac1{\kappa+\nu} \int_{0}^{t}  \|\xi_s - \bar{\xi}_s\|^2_{H^{-\tilde{\vartheta}}}  \|\bar{\xi}_s\|^2_{H^{1}} \, ds.
		\end{equation}
		
		Combining \eqref{nonlinear 1st term} and \eqref{nonlinear 2nd term}, by \eqref{nonlinear decompose} we get
		\begin{equation}\label{^2 result}
			\begin{split}
				&\quad \ \Big\|\mathcal{G}\Big(\int_{0}^{\cdot}(u_s\cdot\nabla \xi_s -\bar{u}_s \cdot \nabla \bar{\xi}_s ) \, ds \Big)(t)\Big\|^2_{H^{-\tilde{\vartheta}}} \\
				&\lesssim \frac1{\kappa+\nu} \int_{0}^{t} \|\xi_s-\bar{\xi}_s\|^2_{H^{-\tilde{\vartheta}}} \big(  \|\xi_0\|_{L^{2}}  \|\xi_s\|_{H^{1}} + \|\bar{\xi}_s\|^2_{H^{1}} \big) \, ds .
			\end{split}
		\end{equation}
		According to \eqref{linear decompose}, for $\tilde{\vartheta} \in (1,\frac{3}{2})$, we have
		\begin{equation}\nonumber
			\|\xi_t-\bar{\xi}_t\|^2_{H^{-\tilde{\vartheta}}}\lesssim \|\mathcal{G}(f_t)\|^2_{H^{-\tilde{\vartheta}}}  + \frac1{\kappa+\nu} \int_{0}^{t} \|\xi_s-\bar{\xi}_s\|^2_{H^{-\tilde{\vartheta}}} \big(  \|\xi_0\|_{L^{2}}  \|\xi_s\|_{H^{1}} + \|\bar{\xi}_s\|^2_{H^{1}} \big) \, ds.
		\end{equation}
		By Grönwall's inequality, it holds
		\begin{equation}\label{difference sup}
			\sup_{t\in [0,T]}\|\xi_t-\bar{\xi}_t\|^2_{H^{-\tilde{\vartheta}}} \lesssim \Big( \sup_{t\in [0,T]} \|\mathcal{G}(f_t)\|^2_{H^{-\tilde{\vartheta}}} \Big) \exp\bigg(  \frac{1}{\kappa+\nu} \int_{0}^{T} \big(\|\xi_0\|_{L^{2}} \|\xi_s\|_{H^{1}}+\|\bar{\xi}_s\|^2_{H^{1}} \big) \, ds\bigg).
		\end{equation}
		Notice that $\xi$ and $\bar{\xi}$ satisfy \eqref{priori estimates 1} and \eqref{priori estimates 2} respectively, then H\"older's inequality yields
		\begin{equation}\nonumber
			\begin{split}
				\int_{0}^{T} \big(\|\xi_0\|_{L^{2}}  \|\xi_s\|_{H^{1}}+\|\bar{\xi}_s\|^2_{H^{1}} \big) \, ds
				&\lesssim T^{\frac{1}{2}}\|\xi_0\|_{L^{2}} \Big(\int_{0}^{T} \|\xi_s\|^2_{H^{1}} \, ds \Big)^{\frac{1}{2}} + \int_{0}^{T} \|\bar{\xi}_s\|^2_{H^{1}} \, ds\\
				&\lesssim \kappa^{-\frac{1}{2}} T^{\frac{1}{2}} \|\xi_0\|^2_{L^{2}}  + (\kappa+\nu)^{-1}  \|\xi_0\|^2_{L^{2}} .
			\end{split}
		\end{equation}
		Substituting this estimate into \eqref{difference sup} and setting
		$$C=\frac{\kappa^{-\frac{1}{2}} T^{\frac{1}{2}}+(\kappa+\nu)^{-1}}{\kappa+\nu}, $$
		we arrive at
		\begin{equation*}
			\sup_{t\in [0,T]}\|\xi_t-\bar{\xi}_t\|^2_{H^{-\tilde{\vartheta}}} \lesssim \Big( \sup_{t\in [0,T]} \|\mathcal{G}(f_t)\|^2_{H^{-\tilde{\vartheta}}}  \Big) \exp\big(C \|\xi_0\|^2_{L^{2}}  \big).
		\end{equation*}
		Furthermore, by \eqref{prf of thm 1 1st conclusion}, we have
		\begin{equation}\label{xi-barxi}
			\|\xi-\bar{\xi}\|_{C([0,T],H^{-\tilde{\vartheta}})} \lesssim \|f\|_{C^{\rho\zeta}([0,T],H^{-\tilde{\vartheta}})} \, \exp\Big(\frac{C}{2} \,  \|\xi_0\|^2_{L^{2}}  \Big).
		\end{equation}
		Taking expectation and then applying \eqref{thm1 interpolation}, for $\tilde{\vartheta} \in (1,\frac{3}{2})$, Lemmas \ref{estimate on f} and \ref{sup f H-1} yield
		\begin{equation}\label{xi-barxi expectation}
			\begin{split}
				&\quad \ \mathbb{E} \Big[\|\xi-\bar{\xi}\|_{C([0,T],H^{-\tilde{\vartheta}})}\Big]\\
				&\lesssim \mathbb{E} \Big[ \|f\|_{C^{\rho}([0,T],H^{-\beta})} \Big]^{\zeta } \, \Big(\sup_{t\in [0,T]} \|f_t\|_{H^{-1}}\Big)^{1-\zeta }  \exp\Big(\frac{C}{2} \,  \|\xi_0\|^2_{L^{2}}  \Big)\\
				&\lesssim T^{1+\frac{\gamma}{2}(\zeta-1)} \|\xi_0\|_{L^2}  \big(\nu^{1+\frac{\gamma}{2}}\alpha^{-\epsilon}  +\nu^{\frac{1}{2}}\|\theta\|_{\ell^{\infty}}\big)^{\zeta}  \big(\nu \kappa^{-\frac{1}{2}}+\kappa^{-\frac{\gamma}{2}}\big)^{1-\zeta} \exp\Big(\big(1+\frac{C}{2}\big) \,  \|\xi_0\|^2_{L^{2}}  \Big),
			\end{split}
		\end{equation}	
		where the last step follows from
		\begin{equation}\nonumber
			\big(1+ \|\xi_0\|_{L^2}\big)^{1+\zeta} <  \big(1+ \|\xi_0\|_{L^2}\big)^{2} \leq 2\,  \big(1+ \|\xi_0\|^2_{L^2}\big) \leq 2 \exp(\|\xi_0\|^2_{L^2}), \quad \zeta \in (0,1).
		\end{equation}
	If we take $C_1 \sim T^{1+\frac{\gamma}{2}(\zeta-1)} \big(\nu \kappa^{-\frac{1}{2}}+\kappa^{-\frac{\gamma}{2}}\big)^{1-\zeta} $ and $C_2=1+\frac{C}{2}$, then \eqref{xi-barxi expectation} can be rewritten as
		\begin{equation}\label{xi-xibar final}
			\mathbb{E} \Big[\|\xi-\bar{\xi}\|_{C([0,T],H^{-\tilde{\vartheta}})}\Big] \leq C_1  \|\xi_0\|_{L^2}  \exp\big(C_2 \|\xi_0\|^2_{L^2} \big) \big(\nu^{1+\frac{\gamma}{2}}\alpha^{-\epsilon}  +\nu^{\frac{1}{2}}\|\theta\|_{\ell^{\infty}}\big)^{\zeta}.
		\end{equation}

		Once we have the estimate for $\tilde{\vartheta}>1$, then for $\hat{\vartheta} \in (0,1)$, we deduce from Lemma \ref{interpolation} that
		\begin{equation}\label{vartheta in (0,1)}
			\begin{split}
				\mathbb{E} \Big[\|\xi-\bar{\xi}\|_{C([0,T],H^{-\hat{\vartheta}})}\Big] &\leq \mathbb{E} \Big[\|\xi-\bar{\xi}\|^{\hat{\vartheta}/\tilde{\vartheta}}_{C([0,T],H^{-\tilde{\vartheta}})}\Big] \, \|\xi_0\|_{L^2}^{1-\hat{\vartheta}/\tilde{\vartheta}}\\
				&\leq C_1^{\hat{\vartheta} /\tilde{\vartheta}} \|\xi_0\|_{L^2} \exp \big(C_2 (\hat{\vartheta}/\tilde{\vartheta}) \|\xi_0\|^2_{L^{2}}\big) \,   \big(\nu^{1+\frac{\gamma}{2}}\alpha^{-\epsilon} +\nu^{\frac{1}{2}}\|\theta\|_{\ell^{\infty}}\big)^ {\zeta  \hat{\vartheta} /\tilde{\vartheta}} \\
				&\leq  C_1  \|\xi_0\|_{L^2}  \exp\big(C_2 \|\xi_0\|^2_{L^2} \big) \big(\nu^{1+\frac{\gamma}{2}}\alpha^{-\epsilon}  +\nu^{\frac{1}{2}}\|\theta\|_{\ell^{\infty}}\big)^{\zeta} ,
			\end{split}
		\end{equation}	
		where in the last step we have used the fact that $C_1>1$, $C_2>0$ and $\hat{\vartheta}/\tilde{\vartheta}<1$. Hence it holds $C_1^{\hat{\vartheta} /\tilde{\vartheta}}<C_1$ and $C_2 \, \hat{\vartheta}/\tilde{\vartheta} < C_2$. Combining \eqref{xi-xibar final} and  \eqref{vartheta in (0,1)}, we obtain the final conclusion of Theorem \ref{thm1} for every $\vartheta>0$.
	\end{proof}

	\subsection{Proof of Theorem \ref{thm2}}\label{subsec-proof-thm-2}

	In order to prove Theorem \ref{thm2}, we first present several simple results as follows.
	\begin{lemma}\label{energy estimate}
		For the solution of \eqref{SPDE}, the energy equality holds with probability one:
		$$	\frac{d}{dt}\,  \|\xi\|^2_{L^2} = -2\kappa \, \|\xi\|^2_{H^1}. $$
	\end{lemma}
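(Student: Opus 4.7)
The plan is to test equation \eqref{SPDE} pathwise against the solution $\xi$ itself, exploiting the divergence-free structure of both $u=K\ast\xi$ and $\bm{b}$. Because the equation is interpreted in a pathwise (not It\^o) sense, no stochastic corrections appear, and the argument reduces to a deterministic energy computation performed for each $\omega$ outside the negligible set of Definition \ref{solution of 1.1}.

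The two transport terms both vanish by integration by parts:
\begin{equation*}
\langle u\cdot\nabla\xi,\xi\rangle=\tfrac12\int_{\T^2}u\cdot\nabla(\xi^2)\,dx=-\tfrac12\int_{\T^2}(\nabla\cdot u)\,\xi^2\,dx=0,
\end{equation*}
and identically $\langle \bm{b}(t)\cdot\nabla\xi,\xi\rangle=0$ using the divergence-free assumption on $\bm{b}$. The diffusive term yields $2\kappa\langle\Delta\xi,\xi\rangle=-2\kappa\|\nabla\xi\|_{L^2}^2=-2\kappa\|\xi\|_{H^1}^2$ under the zero-mean Fourier convention for the Sobolev norm on $\T^2$, which is consistent with the a priori bound \eqref{priori estimates 1}. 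Summing these three contributions produces the claimed identity formally.

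The only delicate point, which I regard as the main obstacle, is the rigorous justification of the chain rule for $t\mapsto\|\xi_t\|_{L^2}^2$, since $\xi_t$ has only $L^2$ spatial regularity at each fixed time. From \eqref{priori estimates 1} we have $\xi\in L^\infty_{\rm loc}([0,\infty),L^2)\cap L^2_{\rm loc}([0,\infty),H^1)$; inspecting \eqref{SPDE} together with the product estimate of Lemma \ref{HHH} and the spatial smoothness of $\bm{b}$ shows that $\partial_t\xi\in L^2_{\rm loc}([0,\infty),H^{-1})$. The standard Lions--Magenes duality lemma then yields $\xi\in C_{\rm loc}([0,\infty),L^2)$ together with
\begin{equation*}
\tfrac{d}{dt}\|\xi_t\|_{L^2}^2=2\langle\partial_t\xi_t,\xi_t\rangle_{H^{-1},H^1}.
\end{equation*}
Substituting $\partial_t\xi$ from \eqref{SPDE} and applying the three cancellations above completes the proof. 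Alternatively, one can argue via spatial mollification: since $u\in L^2_{\rm loc}H^2$ (from $\xi\in L^2_{\rm loc}H^1$ and $u=K\ast\xi$) and $\bm{b}$ is smooth in space, the commutator generated by the mollifier vanishes in $L^1_tL^2_x$ by a classical DiPerna--Lions argument, and the energy identity passes to the limit. No new estimates beyond those already developed in Section \ref{sec-prep} are required.
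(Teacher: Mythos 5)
Your proposal is correct and follows essentially the same route as the paper: test the equation against $\xi$ itself, kill both transport terms by integration by parts using ${\rm div}\,u={\rm div}\,\bm{b}=0$, and identify $-2\kappa\|\nabla\xi\|_{L^2}^2$ with $-2\kappa\|\xi\|_{H^1}^2$ under the zero-mean convention. The paper presents only the formal computation, so your additional Lions--Magenes justification of the chain rule is a welcome but not divergent refinement.
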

	
	\begin{proof}
		This result is well-known; we present the proof for completeness. Notice that
		\begin{equation}\nonumber
			\frac{d}{dt}\, \|\xi\|^2_{L^2} = \frac{d}{dt} \,\langle \xi, \xi \rangle = \int_{\mathbb{T}^2} 2 \xi \, \frac{\partial \xi}{\partial t} \, dx.
		\end{equation}
		By \eqref{SPDE} , we can further get
		\begin{equation*}
			\begin{split}
				\int_{\mathbb{T}^2} 2 \xi \, \frac{\partial \xi}{\partial t} \, dx &= 2\kappa \int_{\mathbb{T}^2} \xi \, \Delta \xi \, dx - 2 \int_{\mathbb{T}^2} \xi  u \cdot \nabla \xi \, dx -2\int_{\mathbb{T}^2}\xi \,\bm{b} \cdot \nabla \xi \, dx\\
				&=-2\kappa \int_{\mathbb{T}^2} |\nabla \xi|^2 \, dx + \int_{\mathbb{T}^2} {\rm div}(u) \, \xi^2 \, dx + \int_{\mathbb{T}^2} {\rm div}(\bm{b}) \, \xi^2 \,dx\\
				&=-2\kappa \, \|\xi\|^2_{H^1}.\quad \qedhere
			\end{split}
		\end{equation*}
	\end{proof}
	
Recall that $\{\mathcal{F}_t\}_{t\ge 0}$ is the filtration on the probability space $\Omega$. The following estimate is an easy consequence of Theorem \ref{thm1}.

	\begin{lemma}\label{thm1 corollary}
		For any $n \in \mathbb{N}$, let $\{\bar{\xi}^n_t\}_{t\ge n}$ be the solution to
		\begin{equation}\label{deterministic pde with initial time n}
			\left\{ \aligned
			& \partial_t \bar\xi^n+ \bar u^n\cdot\nabla \bar\xi^n =(\kappa+\nu) \Delta \bar\xi^n, \quad t\geq n,\\
			& \bar u^n= K\ast \bar\xi^n, \quad \bar\xi^n|_{t=n} = \xi_n.
			\endaligned
			\right.
		\end{equation}
Then it holds, $\P$-a.s.,
		$$\E \Big[\sup_{t\in [n,n+1]} \|\xi_t-\bar{\xi}^n_{t}\|_{H^{-1}} \big| \mathcal{F}_n\Big] \leq C_1  \|\xi_n\|_{L^2}  \exp\big(C_2 \|\xi_0\|^2_{L^2} \big) \big(\nu^{1+\frac{\gamma}{2}}\alpha^{-\epsilon}  +\nu^{\frac{1}{2}}\|\theta\|_{\ell^{\infty}}\big)^{\zeta} ,$$
		where $C_1$ and $C_2$ are defined as in Theorem \ref{thm1} and are independent of $n$.
	\end{lemma}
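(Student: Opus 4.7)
My plan is to apply Theorem \ref{thm1} with $T=1$ after a time shift by $n$, using the stationarity of the Ornstein--Uhlenbeck flow and the $\mathcal{F}_n$-measurability of $\xi_n$. First, I would introduce the shifted processes $\tilde\xi_s := \xi_{n+s}$, $\tilde{\bar\xi}_s := \bar\xi^n_{n+s}$, and $\tilde{\bm b}(s) := \bm b(n+s)$ for $s\in[0,1]$, and note that $(\tilde\xi, \tilde{\bm b})$ is a weak solution of \eqref{SPDE} on $[0,1]$ with $\mathcal{F}_n$-measurable initial datum $\xi_n$, while $\tilde{\bar\xi}$ solves \eqref{PDE} with the same initial datum.

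The key observation is that, by the stationarity of $\{\eta^{\alpha,k}\}$, the shifted noise $\tilde{\bm b}$ has the same law as $\bm b$; crucially, every moment bound used in the proof of Theorem \ref{thm1} --- notably Lemmas \ref{lemma Eb} and \ref{Eb sup lemma}, and the estimates derived in Lemmas \ref{E H^-1}--\ref{E H2-g} --- depends only on this stationary marginal law, so the same numerical constants apply to $\tilde{\bm b}$. With this in hand I would rerun the proof of Theorem \ref{thm1} verbatim for $(\tilde\xi,\tilde{\bar\xi},\tilde{\bm b})$ on $[0,1]$, but interpreting each expectation as $\E[\,\cdot\mid\mathcal{F}_n]$. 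The key pathwise identity
\begin{equation*}
\sup_{s\in[0,1]}\|\tilde\xi_s-\tilde{\bar\xi}_s\|_{H^{-\tilde\vartheta}} \lesssim \sup_{s\in[0,1]}\|\mathcal{G}(\tilde f_s)\|_{H^{-\tilde\vartheta}} \cdot \exp\!\big(\tfrac{C}{2}\|\xi_n\|^2_{L^2}\big)
\end{equation*}
(the analog of the estimate obtained near the end of the proof of Theorem \ref{thm1}, with $\tilde f$ defined as in \eqref{def of f} but for the shifted system) carries an $\mathcal{F}_n$-measurable exponential factor, which can be pulled out of $\E[\,\cdot\mid\mathcal{F}_n]$, while the remaining random factor is controlled by the conditional analogues of Propositions \ref{estimate on f} and \ref{sup f H-1}, yielding
\begin{equation*}
\E\Big[\sup_{t\in[n,n+1]}\|\xi_t-\bar\xi^n_t\|_{H^{-1}}\,\Big|\,\mathcal{F}_n\Big] \leq C_1\,\|\xi_n\|_{L^2}\exp\!\big(C_2\|\xi_n\|^2_{L^2}\big)\big(\nu^{1+\gamma/2}\alpha^{-\epsilon}+\nu^{1/2}\|\theta\|_{\ell^\infty}\big)^\zeta.
\end{equation*}

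To finish, I would apply the pathwise energy estimate \eqref{priori estimates 1}, which guarantees $\|\xi_n\|_{L^2}\leq \|\xi_0\|_{L^2}$ almost surely, hence $\exp(C_2\|\xi_n\|^2_{L^2})\leq \exp(C_2\|\xi_0\|^2_{L^2})$, and the desired bound follows. The only delicate point is verifying that every moment estimate for $\tilde{\bm b}$ used in the proof of Theorem \ref{thm1} continues to hold under the conditional expectation $\E[\,\cdot\mid\mathcal{F}_n]$; this is essentially automatic because the stationarity of the Ornstein--Uhlenbeck flow ensures that the Sobolev moments of $\tilde{\bm b}$ relied upon in Section 3 coincide with the corresponding moments of $\bm b$ as far as the final numerical constants $C_1, C_2, \epsilon, \zeta$ are concerned, and $\xi_n$, being $\mathcal{F}_n$-measurable, plays the role of a deterministic initial condition under $\E[\,\cdot\mid\mathcal{F}_n]$.
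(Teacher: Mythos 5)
Your proposal is correct and follows essentially the same route as the paper: the paper's own proof simply invokes Theorem \ref{thm1} with $\vartheta=1$, $T=1$, appeals to the stationarity of the Ornstein--Uhlenbeck flow to shift the estimate to $[n,n+1]$, treats the $\mathcal{F}_n$-measurable $\xi_n$ as the initial datum under $\E[\,\cdot\mid\mathcal{F}_n]$, and uses \eqref{priori estimates 1} to replace $\exp(C_2\|\xi_n\|^2_{L^2})$ by $\exp(C_2\|\xi_0\|^2_{L^2})$ --- exactly your steps, which you in fact spell out in more detail than the paper does. The one delicate point you flag (that the conditional law of the shifted noise given $\mathcal{F}_n$ is not literally the stationary law, since $\eta^{\alpha,k}(n)$ is $\mathcal{F}_n$-measurable) is glossed over in the same way by the paper itself.
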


\begin{proof}
Notice that if we take $\vartheta=1$ and $T=1$ in Theorem \ref{thm1}, then we get a quantitative estimate on the distance between the solutions of \eqref{SPDE} and \eqref{PDE}, both with the same deterministic initial value $\xi_0$. Since the Ornstein-Uhlenbeck flow $\bm{b}$ in \eqref{SPDE} is a stationary process, such estimate holds on any unit interval of the form $[n,n+1]$, as long as we restart \eqref{PDE} at the time $t=n$ with the same value $\xi_n$. However, as $\xi_n$ is random, we need to take conditional expectation with respect to $\mathcal{F}_n$ and get the desired result.
\end{proof}
	
	\begin{lemma}\label{decay of L2 norm}
		For all $n \in \mathbb{N}$, decay of $L^2$-norm of the solution to \eqref{deterministic pde with initial time n} satisfies	
		$$	\|\bar{\xi}_{t}^n\|^2_{L^2}\leq e^{-\lambda_1(t-n)} \, \|\xi_n\|^2_{L^2}, $$
		where $\lambda_1:=8\pi^2 (\kappa+\nu)$ is the principal eigenvalue of $(\kappa+\nu)\Delta$ on $\T^2$.
	\end{lemma}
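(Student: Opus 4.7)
The proof is a standard energy estimate; there is no real obstacle, only careful bookkeeping of constants.

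The plan is to run the energy identity for the deterministic viscous equation \eqref{deterministic pde with initial time n} exactly as in Lemma \ref{energy estimate}, but without the noise term. Pairing the equation with $\bar\xi^n_t$ in $L^2(\T^2)$, the nonlinear term vanishes because $\bar u^n = K\ast \bar\xi^n$ is divergence-free, giving
\[
\langle \bar u^n \cdot \nabla \bar\xi^n, \bar\xi^n \rangle = \tfrac{1}{2}\int_{\T^2} \bar u^n \cdot \nabla (\bar\xi^n)^2 \, dx = -\tfrac12\int_{\T^2} {\rm div}(\bar u^n)\,(\bar\xi^n)^2\,dx = 0,
\]
while the diffusion term produces $(\kappa+\nu)\langle \Delta\bar\xi^n,\bar\xi^n\rangle = -(\kappa+\nu)\|\nabla \bar\xi^n\|_{L^2}^2$. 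Thus
\[
\frac{d}{dt}\|\bar\xi^n_t\|_{L^2}^2 = -2(\kappa+\nu)\|\nabla \bar\xi^n_t\|_{L^2}^2.
\]

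Next I would invoke Poincar\'e's inequality on $\T^2 = \R^2/\Z^2$ for zero-mean functions. Since the eigenvalues of $-\Delta$ on the space of zero-average functions are $4\pi^2|k|^2$ for $k\in\Z_0^2$, the smallest one is $4\pi^2$, so
\[
\|\nabla \bar\xi^n_t\|_{L^2}^2 \,\geq\, 4\pi^2\,\|\bar\xi^n_t\|_{L^2}^2.
\]
Substituting into the energy identity yields
\[
\frac{d}{dt}\|\bar\xi^n_t\|_{L^2}^2 \,\leq\, -8\pi^2(\kappa+\nu)\,\|\bar\xi^n_t\|_{L^2}^2 \,=\, -\lambda_1\,\|\bar\xi^n_t\|_{L^2}^2.
\]

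Finally, Gr\"onwall's inequality on $[n,t]$ together with the initial condition $\bar\xi^n|_{t=n}=\xi_n$ gives the stated exponential decay
\[
\|\bar\xi^n_t\|_{L^2}^2 \,\leq\, e^{-\lambda_1(t-n)}\,\|\xi_n\|_{L^2}^2.
\]
The only subtle point, which is not really difficult, is that $\xi_n$ is random, but since \eqref{deterministic pde with initial time n} is solved pathwise (for fixed $\omega$) with deterministic initial value $\xi_n(\omega)$, the estimate holds $\P$-a.s.
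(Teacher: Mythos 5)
Your proof is correct and follows essentially the same route as the paper: the energy identity (nonlinear term vanishing by incompressibility), Poincar\'e's inequality with constant $4\pi^2$ for zero-mean functions on $\T^2$, and then solving the resulting differential inequality. No gaps.
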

	
	\begin{proof}
		By \eqref{deterministic pde with initial time n}, we use similar method as Lemma \ref{energy estimate} to get
		\begin{equation}\nonumber
			\frac{d}{dt}\, \|\bar{\xi}^n\|^2_{L^2}= -2(\kappa+\nu) \, \|\nabla \bar{\xi}^n \|^2_{L^2}.
		\end{equation}
		Poincar\'e's inequality yields $\|\bar{\xi}^n\|^2_{L^2} \leq \frac{1}{4\pi^2} \, \|\nabla\bar{\xi}^n\|^2_{L^2}$,
		thus
		\begin{equation}\nonumber
			\frac{d}{dt}\, \|\bar{\xi}^n\|^2_{L^2} \leq -8\pi^2 (\kappa+\nu) \|\bar{\xi}^n\|^2_{L^2}.
		\end{equation}
		  Solving the differential inequality gives us the desired estimate.
	\end{proof}

\begin{remark}
		In particular, if we consider \eqref{deterministic pde with initial time n} with initial time $n=0$, then it reduces to equation \eqref{PDE}, and we get the decay rate of $L^2$-norm for the solution to \eqref{PDE} as
	$$	\|\bar{\xi}_{t}\|^2_{L^2}\leq e^{-\lambda_1 t} \, \|\xi_0\|^2_{L^2}. $$	
\end{remark}

	On the basis of the above results, now we can provide
	
	\begin{proof}[Proof of Theorem \ref{thm2}] Since the proof is rather long, we divide it into the following four steps.
	
	\emph{Step 1.}
	Let $R>0$ be given as in the statement of Theorem \ref{thm2}, and denote
  $$c_1:= C_1^{1/2} \exp\Big(\frac{C_2 R^2}{2}  \Big) \big(\nu^{1+\frac{\gamma}{2}} \alpha^{-\epsilon} +\nu^{\frac{1}{2}} \|\theta\|_{\ell^{\infty}}\big)^{\zeta / 2},$$
which is sufficiently small by taking $\alpha$ big and $\|\theta\|_{\ell^{\infty}}$ small. Recall that we have assumed $\|\xi_0\|_{L^2} \le R$; then by Lemma \ref{thm1 corollary}, we have, $\P$-a.s.,
  \begin{equation}\label{proof-thm-1.1}
  \E \Big[\sup_{t\in [n,n+1]} \|\xi_t-\bar{\xi}^n_{t}\|_{H^{-1}} \big| \mathcal{F}_n\Big] \leq c_1^2 \|\xi_n\|_{L^2}.
  \end{equation}
Define the event
  $$A_n:= \Big\{\omega \in \Omega: \sup_{t\in [n,n+1]} \|\xi_t(\omega) -\bar{\xi}^n_{t}(\omega)\|_{H^{-1}}> c_1\|\xi_n(\omega)\|_{L^2}\Big\},$$
and $A_n^c$ is its complement; we want to prove
	\begin{equation}\label{prf of thm2 step1}
		\P(A_n^c) > 1-c_1,
	\end{equation}
which is an easy consequence of
	\begin{equation}\nonumber
		\P \big(A_n |\mathcal{F}_n \big)= \E \big[{\bf 1}_{A_n}|\mathcal{F}_n\big] \leq c_1.
	\end{equation}
Indeed, for any $B \in \mathcal{F}_n$, it holds
$$\aligned
\int_B \E \big[{\bf 1}_{A_n}|\mathcal{F}_n\big] \, d\P= \int_B {\bf 1}_{A_n} \, d\P  &\leq \int_{A_n \cap B}\frac{\sup_{t\in [n,n+1]} \|\xi_t -\bar{\xi}^n_{t}\|_{H^{-1}}}{c_1 \|\xi_n\|_{L^2}} \, d\P\\
&\leq c_1^{-1} \int_B \|\xi_n\|_{L^2}^{-1} \sup_{t\in [n,n+1]} \|\xi_t -\bar{\xi}^n_{t}\|_{H^{-1}} \, d\P\\
&=c_1^{-1} \int_B \E \Big[\|\xi_n\|_{L^2}^{-1} \sup_{t\in [n,n+1]} \|\xi_t -\bar{\xi}^n_{t}\|_{H^{-1}} \big| \mathcal{F}_n \Big]\, d\P.
\endaligned $$
Then by the arbitrariness of $B$ and \eqref{proof-thm-1.1}, we obtain
\begin{equation}\label{conditional expectation of IAn}
	\begin{split}
		\E \big[{\bf 1}_{A_n}|\mathcal{F}_n\big] &\leq c_1^{-1}  \E \Big[\|\xi_n\|_{L^2}^{-1} \sup_{t\in [n,n+1]} \|\xi_t -\bar{\xi}^n_{t}\|_{H^{-1}} \big| \mathcal{F}_n \Big]\\
		&=c_1^{-1} \|\xi_n\|_{L^2}^{-1} \, \E  \Big[ \sup_{t\in [n,n+1]} \|\xi_t -\bar{\xi}^n_{t}\|_{H^{-1}} \big| \mathcal{F}_n \Big]\\
		&\leq c_1^{-1} \|\xi_n\|_{L^2}^{-1} \, c_1^2 \|\xi_n\|_{L^2}=c_1.
	\end{split}
\end{equation}

\emph{Step 2.} In order to estimate the decay rate of $L^2$-norm for $\xi$, we first try to find the relationship between $\|\xi_n\|_{L^2}$ and $\|\xi_{n+1}\|_{L^2}$ for any $n\in \mathbb{N}$, then we use iteration to extend the conclusion to $\|\xi_t\|_{L^2}$ with initial value $\xi_0$ for all $ t \geq 0$.

Notice that for $t \in [n,n+1]$, Lemma \ref{decay of L2 norm} and inequality \eqref{prf of thm2 step1} yield, with probability no less than $1-c_1$,
\begin{equation}\nonumber
	\|\xi_{t}\|^2_{H^{-1}} \leq 2 \|\bar{\xi}^n_{t}\|^2_{H^{-1}} +2 \|\xi_t-\bar{\xi}^n_{t}\|^2_{H^{-1}} \leq 2 \|\xi_n\|^2_{L^2}\big(e^{-\lambda_1 (t-n)}+c_1^2\big).
\end{equation}
Besides, according to Lemma \ref{interpolation}, we have $\|\xi_{t}\|^2_{L^2} \leq \|\xi_{t}\|_{H^{-1}} \|\xi_t\|_{H^1}$. Hence, combining Lemma \ref{energy estimate} with the above two inequalities leads to
\begin{equation}\nonumber
	\frac{d}{dt}\, \|\xi_t\|^2_{L^2}  \leq -2\kappa \frac{\|\xi_t\|^4_{L^2}}{\|\xi_{t}\|^2_{H^{-1}}} \leq -\frac{\kappa \|\xi_t\|^4_{L^2}}{\|\xi_n\|^2_{L^2}\big(e^{-\lambda_1 (t-n)} +c_1^2\big)}, \quad t\in[n,n+1].
\end{equation}
Solving the differential inequality and then letting $t=n+1$, we get, on the event $A_n^c$,
	\begin{equation}\label{decay of xi}
		\|\xi_{n+1}\|^2_{L^2} \leq \frac{\|\xi_n\|^2_{L^2}}{1+\frac{\kappa}{\lambda_1 c_1^2 }\log\frac{1+c_1^2 e^{\lambda_1 }}{1+c_1^2}}=:c_2^2 \|\xi_n\|_{L^2}^2.
	\end{equation}
As the $L^2$-norm of $\xi$ is decreasing, we additionally apply \eqref{decay of xi} to further get
$$\aligned
\E \|\xi_{n+1}\|_{L^2} &=\E \big[\|\xi_{n+1}\|_{L^2} \, {\bf 1}_{A_n}\big]+\E \big[\|\xi_{n+1}\|_{L^2}\, {\bf 1}_{A^c_n}\big] \le \E \big[\|\xi_{n}\|_{L^2} \, {\bf 1}_{A_n}\big] + \E \big[c_2\|\xi_{n}\|_{L^2} \big].\\
\endaligned $$
Using the property of conditional expectation, \eqref{conditional expectation of IAn}  yields
$$\aligned
\E \|\xi_{n+1}\|_{L^2} &\leq \E \Big[\E \big[\|\xi_{n}\|_{L^2} \, {\bf 1}_{A_n}\big| \mathcal{F}_n\big] \Big] +c_2\, \E \|\xi_{n}\|_{L^2}\\
&=\E \Big[\|\xi_n\|_{L^2} \, \E \big[{\bf 1}_{A_n} \big| \mathcal{F}_n\big]\Big]+c_2\, \E \|\xi_n\|_{L^2}\\
&\leq (c_1+c_2)\, \E \|\xi_n\|_{L^2}.
\endaligned $$
Afterwards, we denote $c_0:=c_1+c_2$ for simplicity. By induction, for any $n \in \mathbb{N}$, it holds
\begin{equation}\label{iteration}
	\E \|\xi_n\|_{L^2} \leq c_0^n \|\xi_0\|_{L^2}.
\end{equation}

\emph{Step 3.} To show the enhanced dissipation property of Ornstein-Uhlenbeck flow, the constant $c_0>0$ has to be sufficiently small. We start with proving the following quantity in the definition of $c_2$ can be very large under suitable choice of parameters:
$$\frac{\kappa}{\lambda_1 c_1^2 }\log\frac{1+c_1^2 e^{\lambda_1}}{1+c_1^2}= \frac{\kappa}{\lambda_1 c_1^2} \log \bigg(\frac{c_1^2}{1+c_1^2}\big(e^{\lambda_1}-1\big)+1\bigg).$$
First, fix a sufficiently large $\nu$, then $\lambda_1=8\pi^2(\kappa+\nu)$ is also very large. Next, we let $\alpha$ be large and $\|\theta\|_{\ell^\infty}$ be small enough, and thus $c_1$ is sufficiently small by its definition. In particular, we can assume that $\frac{c_1^2}{1+c_1^2}(e^{\lambda_1}-1) \in (0,1]$. As $\log(1+x) \geq x\log2$ for $x\in (0,1]$, we have
$$\frac{\kappa}{\lambda_1 c_1^2 } \log \Big(\frac{c_1^2}{1+c_1^2}\big(e^{\lambda_1}-1\big)+1\Big) \geq \frac{\kappa\log2}{\lambda_1 c_1^2 }  \cdot \frac{c_1^2}{1+c_1^2}\big(e^{\lambda_1}-1\big) \geq \frac{\kappa\log2}{2} \cdot \frac{e^{\lambda_1} -1}{\lambda_1},$$
where in the last step we have used the fact that $c_1^2+1 \leq 2$. Since $\lambda_1$ is large, we deduce that the left-hand side is also very large; as a result, $c_2$ can be very small. Combined with the smallness of $c_1$, we conclude that $c_0= c_1 +c_2$ is also a small constant.

\emph{Step 4.} Based on the previous discussions, we can now prove the final conclusion of Theorem \ref{thm2}. Define $\lambda_0:=-\log c_0>0$, which can be assumed to be greater than $\lambda(1+p)$, where $\lambda$ and $p$ are given in the statement of Theorem \ref{thm2}. Then by \eqref{iteration}, it holds
$$\E \Big[\sup_{t\in [n,n+1]} \|\xi_t\|_{L^2}\Big]=\E \|\xi_n\|_{L^2} \leq e^{-\lambda_0 n} \|\xi_0\|_{L^2}. $$
We define the events
$$E_n:=\Big\{\omega \in \Omega: \sup_{t\in [n,n+1]} \|\xi_t(\omega)\|_{L^2}> e^{-\lambda n} \|\xi_0\|_{L^2}\Big\}, \quad n\in \mathbb{N}.$$
By Markov's inequality, it holds
$$\sum_{n\in\mathbb{N}} \P(E_n) \leq \sum_{n\in\mathbb{N}} \frac{e^{\lambda n}}{\|\xi_0\|_{L^2}}\E \Big[\sup_{t\in [n,n+1]} \|\xi_t\|_{L^2}\Big] \leq \sum_{n\in\mathbb{N}} e^{(\lambda-\lambda_0)n} < +\infty.$$
Furthermore, Borel-Cantelli's lemma implies that for $\P$-a.s. $\omega \in \Omega$, there exists $N(\omega) \in \mathbb{N}$, such that for any $n>N(\omega)$,
$$\sup_{t\in [n,n+1]} \|\xi_t\|_{L^2} \leq e^{-\lambda n} \|\xi_0\|_{L^2}.$$
For the case $0 \leq n \leq N(\omega)$, we have
$$\sup_{t\in [n,n+1]} \|\xi_t\|_{L^2} =\|\xi_n\|_{L^2}=e^{\lambda n} e^{-\lambda n} \|\xi_n\|_{L^2} \leq e^{\lambda N(\omega)} e^{-\lambda n} \|\xi_0\|_{L^2}.$$
If we let $C(\omega)=e^{\lambda (1+N(\omega))}$, it is not difficult to verify that for $\P$-a.s. $\omega\in \Omega$,
\begin{equation}\label{thm2 conclusion}
	\|\xi_t\|_{L^2} \leq C(\omega) e^{-\lambda t} \|\xi_0\|_{L^2}, \quad \forall\, t\geq 0.
\end{equation}

As for the finite $p$-th moment of $C(\omega)$, a similar proof can be found in \cite[Section 5.2]{FLD quantitative}, so we omit it here.
\end{proof}

	\section{Proof of Proposition \ref{main proposition}}
	
	We devote this section to the proof of Proposition \ref{main proposition}. We first divide the desired quantity into a summation part and two integrals. For the summation term, we follow the idea of \cite{Pappalettera} and use equation \eqref{SPDE} to further decompose it. Then we will estimate each of the decomposed terms and the two integrals separately. Finally,  in order to obtain the desired estimate, we need to make some restrictions on the parameters.
	
	\subsection{Decomposition}\label{subsec-decomp}

	We decompose the quantity we want to estimate as follows:
	\begin{equation}\label{decomposition of fn-fm}
		\begin{split}
			&\quad \  \xi_{n\delta}-\xi_{m\delta}-(\kappa+\nu)\int_{m\delta}^{n\delta} \Delta \xi_s \, ds+\int_{m\delta}^{n\delta} u_s\cdot \nabla \xi_s \, ds\\
			&=\xi_{n\delta}-\xi_{m\delta}-\delta(\kappa+\nu)\sum_{h=m}^{n-1} \Delta \xi_{h\delta}+ \delta \sum_{h=m}^{n-1} (u_{h\delta} \cdot \nabla \xi_{h\delta})+I_a+I_b,
		\end{split}
	\end{equation}
	where
	$$I_{a}:=(\kappa+\nu) \int_{m\delta}^{n\delta} \Delta\big( \xi_{[s]}-\xi_s\big) ds , \quad I_b:= \int_{m\delta}^{n\delta} \big(u_s \cdot \nabla \xi_s-u_{[s]} \cdot \nabla \xi_{[s]}\big) ds,$$
	and $[s]:=\sup_{j\in \mathbb{N}}\{j\delta:j\delta \leq s \}$. We first consider
	$$\xi_{n\delta}-\xi_{m\delta}-\delta(\kappa+\nu)\sum_{h=m}^{n-1} \Delta \xi_{h\delta}+ \delta \sum_{h=m}^{n-1} (u_{h\delta} \cdot \nabla \xi_{h\delta})  ,  \quad n>m,	$$
	which can also be written as
	$$	\xi_{(n+1)\delta}-\xi_{m\delta}-\delta(\kappa+\nu)\sum_{h=m}^{n} \Delta\xi_{h\delta}+\delta \sum_{h=m}^{n} (u_{h\delta} \cdot \nabla \xi_{h\delta}) , \quad  n\geq m.	 $$
	
	According to Definition \ref{solution of 1.1},  for every $h=0,1,\ldots,T/\delta-1$, we have
	\begin{equation} \label{decompose 1}
		\begin{split}
			\xi_{(h+1)\delta}-\xi_{h\delta}&=-\int_{h\delta}^{(h+1)\delta} u_s \cdot \nabla \xi_s \, ds- \int_{h\delta}^{(h+1)\delta} \bm{b}(s) \cdot \nabla \xi_s \, ds+ \kappa \int_{h\delta}^{(h+1)\delta} \Delta \xi_s  \,ds\\
			&=:I_1(h)+I_2(h)+I_3(h).
		\end{split}
	\end{equation}
	Furthermore, we can make the following decomposition for $I_2(h)$:
	\begin{equation}\nonumber
		\begin{split}
			I_2(h)&=-\int_{h\delta}^{(h+1)\delta} \bm{b}(s) \cdot \nabla ( \xi_s-\xi_{h\delta} ) \, ds-\int_{h\delta}^{(h+1)\delta} \bm{b}(s) \cdot \nabla \xi_{h\delta} \, ds\\
			&=:I_{21}(h)+I_{22}(h)+I_{23}(h)+I_{24}(h)+I_{25}(h),
		\end{split}
	\end{equation}
	where $I_{2i}(h), i=1,\ldots,5$ are defined as
	\begin{align*}
		I_{21}(h)&=\int_{h\delta}^{(h+1)\delta}\int_{h\delta}^{s}\bm{b}(s) \cdot \nabla(u_r \cdot \nabla \xi_r) \, drds,\\
		I_{22}(h)&=\int_{h\delta}^{(h+1)\delta}\int_{h\delta}^{s}\bm{b}(s) \cdot \nabla \big(\bm{b}(r)\cdot \nabla (\xi_r-\xi_{h\delta})\big) \, drds,\\
		I_{23}(h)&= \int_{h\delta}^{(h+1)\delta}\int_{h\delta}^{s} \bm{b}(s) \cdot \nabla \big(\bm{b}(r) \cdot \nabla \xi_{h\delta}\big)\, drds,\\
		I_{24}(h)&=-\kappa \int_{h\delta}^{(h+1)\delta}\int_{h\delta}^{s}\bm{b}(s) \cdot \nabla (\Delta \xi_r)\, drds,\\
		I_{25}(h)&=-\int_{h\delta}^{(h+1)\delta}\bm{b}(s) \cdot \nabla \xi_{h\delta} \, ds.
	\end{align*}
	By the definition of $\bm{b}$, the term $I_{23}(h)$ can be rewritten as follows:
	\begin{equation}\nonumber
		\begin{split}
			I_{23}(h)&=4\nu \sum_{k,k'\in \mathbb{Z}_0^2}\int_{h\delta}^{(h+1)\delta}\int_{h\delta}^{s} \theta_{k}\sigma_{k}\eta^{\alpha,{k}}(s) \cdot \nabla( \theta_{k'}\sigma_{k'}\eta^{\alpha,k'}(r) \cdot \nabla \xi_{h\delta}) \, drds\\
			&=4\nu \sum_{k,k'\in \mathbb{Z}_0^2} \theta_{k}\sigma_{k} \cdot \nabla( \theta_{k'}\sigma_{k'} \cdot \nabla \xi_{h\delta})  \bigg( \int_{h\delta}^{(h+1)\delta}\int_{h\delta}^{s} \eta^{\alpha,k}(s)  \eta^{\alpha,{k'}}(r)  \, drds-   \delta_{k,k'} \frac{\delta}{2}\bigg)\\
			&\quad+ 2\nu \delta \sum_{k\in \mathbb{Z}_0^2} \theta_{k}\sigma_{k} \cdot \nabla(\theta_{k}\sigma_{k} \cdot \nabla \xi_{h\delta} )\\
			&=:I_{231}(h)+I_{232}(h).
		\end{split}
	\end{equation}	
	Using the radial symmetry of $\theta\in \ell^2(\Z^2_0)$ and the expression of $\sigma_k$, it is not difficult to prove the following identity (see e.g. \cite[Lemma 2.1]{FLarxiv})
	\begin{equation*}
		\sum_{k\in \Z_0^2}\theta_k^2 (\sigma_k \otimes  \sigma_k )(x)= \frac{1}{2} \|\theta\|_{\ell^2}^2 \, Id,
	\end{equation*}
	where $Id$ is the two dimensional identity matrix; hence by ${\rm div} \sigma_{k}=0$ and $\|\theta\|_{\ell^2}^2=1$, we have
	\begin{equation}\label{citation 1}
		\begin{split}
			\sum_{k\in \mathbb{Z}_0^2}\theta_{k}\sigma_{k} \cdot \nabla(\theta_{k}\sigma_{k} \cdot \nabla \xi_{h\delta} ) ={\rm div}\Big(\sum_{k \in \mathbb{Z}_0^2} \theta_k^2 \, (\sigma_{k} \otimes \sigma_{k}) \nabla \xi_{h\delta} \Big)= \frac{1}{2}\Delta\xi_{h\delta} .
		\end{split}
	\end{equation}
	Therefore, $I_{232}(h)=\nu \delta \Delta \xi_{h\delta}$.
	As for the term $I_3(h)$, we can divide it into two terms:
	\begin{equation}\nonumber
		I_{3}(h)=\kappa \int_{h\delta}^{(h+1)\delta} \Delta(\xi_s - \xi_{h\delta}) \, ds + \kappa \delta  \Delta \xi_{h\delta}  =:I_{31}(h)+I_{32}(h).
	\end{equation}
	
	Taking the sum of \eqref{decompose 1} over $h=m,\ldots,n$, and noticing that $$I_{232}(h)+I_{32}(h)=\delta(\kappa+\nu) \Delta \xi_{h\delta} ,$$
	we can finally get
	\begin{equation}\label{decompose 2}
		\begin{split}
			&\quad \ \xi_{(n+1)\delta}-\xi_{m\delta}-\delta(\kappa+\nu)\sum_{h=m}^{n} \Delta\xi_{h\delta}+\delta \sum_{h=m}^{n} (u_{h\delta} \cdot \nabla \xi_{h\delta}) \\
			&=\sum_{h=m}^{n} \big(I_{1}(h)+I_{21}(h)+I_{22}(h)+I_{231}(h)+I_{24}(h)+I_{25}(h)+I_{31}(h)+\delta  (u_{h\delta} \cdot \nabla  \xi_{h\delta})  \big) .
		\end{split}
	\end{equation}

In the following several subsections, we will estimate each term of the above formula. For readers' convenience, we give a brief introduction here. The estimates on terms $I_{21}(h)$, $I_{22}(h)$, $I_{24}(h)$ and $I_{31}(h)$ will be given in Section \ref{subs-5.2}. As the terms $I_{231}(h)$ and $I_{25}(h)$ are more technical to deal with, we consider them in Sections \ref{subsec-231} and \ref{subsec-25}, respectively. In Section \ref{subsec-1-drift}, we treat $I_1(h)$ together with $\delta(u_{h\delta} \cdot \nabla  \xi_{h\delta})$. We mention that the two remaining intergrals $I_a$ and $I_b$ in \eqref{decomposition of fn-fm} are similar to $I_{31}(h)$ and $I_1(h)+\delta(u_{h\delta}\cdot \nabla \xi_{h\delta})$, respectively, hence we give their estimates in Section \ref{subsec-a-b} without proof. Finally, we combine all the estimates and provide in Section \ref{subs-proof-Prop-3.3} the proof of Proposition \ref{main proposition}.

	\subsection{The terms $I_{21}(h),I_{22}(h),I_{24}(h),I_{31}(h)$}\label{subs-5.2}
	
	The estimates on these four terms are collected in the next lemma.
	
	\begin{lemma}\label{prop3 1st lemma}
		Let $\gamma \in (0,\frac{1}{3})$, $\beta>3$ and $T\geq 1$, then we have the following estimates:
		$$	\aligned \mathbb{E} \bigg[\sup_{1\leq m<n \leq T/\delta-1}\Big\|\sum_{h=m}^{n}I_{21}(h)\Big\|_{H^{-\beta}}\bigg] &\lesssim  \delta \nu^{\frac{1}{2}} \alpha^{\frac{1}{2}} \, TC^{1/2}_{\theta,2-\gamma,2} \,  \|\xi_{0}\|^2_{L^2}   ,\\
		\mathbb{E} \bigg[\sup_{1\leq m<n \leq T/\delta-1}\Big\|\sum_{h=m}^{n}I_{22}(h)\Big\|_{H^{-\beta}}\bigg]  &\lesssim  \delta^{1+\gamma} \nu^{1+\frac{\gamma}{2}} \alpha^{1+\frac{\gamma}{2}}  \, T C^{\frac{1}{2}+\frac{\gamma}{4}}_{\theta,2-\gamma,4}  \, \|\xi_0\|_{L^2} \big(1+\|\xi_0\|_{L^2} \big)^{\gamma}  ,\\
		\mathbb{E} \bigg[\sup_{1\leq m<n \leq T/\delta-1}\Big\|\sum_{h=m}^{n}I_{24}(h)\Big\|_{H^{-\beta}}\bigg] &\lesssim \kappa^{\frac{1+\gamma}{2}} \delta^{\frac{1+\gamma}{2}} \nu^{\frac{1}{2}} \alpha^{\frac{1}{2}} \, T C^{1/2}_{\theta,2-\gamma,2} \,  \|\xi_{0}\|_{L^2}  ,\\
		\mathbb{E} \bigg[\sup_{1\leq m<n \leq T/\delta-1}\Big\|\sum_{h=m}^{n}I_{31}(h)\Big\|_{H^{-\beta}}\bigg] &\lesssim   \kappa \delta^{\gamma} \nu^{\frac{\gamma}{2}} \alpha^{\frac{\gamma}{2}} \, T C^{\gamma/2}_{\theta,1+\gamma,2} \,  \|\xi_{0}\|_{L^2} \big(1+\|\xi_{0}\|_{L^2} \big)^{\gamma}.
		\endaligned $$
	\end{lemma}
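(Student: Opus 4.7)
The four bounds share a common skeleton, so I would first carry out the reduction
\[
\sup_{1\le m<n\le T/\delta-1}\Big\|\sum_{h=m}^{n} I(h)\Big\|_{H^{-\beta}}\le \sum_{h=1}^{T/\delta-1}\|I(h)\|_{H^{-\beta}},
\]
reducing the problem to a per-$h$ estimate of $\mathbb{E}\|I(h)\|_{H^{-\beta}}$, with the factor $T/\delta$ from the number of subintervals absorbing one of the two $\delta$'s produced by each double time integral. Inside each $I(h)$ I would pull the $H^{-\beta}$ norm inside the integrals by Minkowski, then use duality: testing against $\phi\in H^{\beta}$ of unit norm, integration by parts transfers the transport derivatives onto $\phi$, using that $\bm{b}(s)$ (and $u_r$, for $I_{21}$) is divergence-free. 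The key auxiliary bound I would prove first is
\[
\|\bm{b}(s)\cdot\nabla\phi\|_{H^{1}}\lesssim \|\bm{b}(s)\|_{H^{2-\gamma}}\|\phi\|_{H^{\beta}},
\]
using the zero-mean trick from Section~1: since $\bm{b}(s)\cdot\nabla\phi=\operatorname{div}(\bm{b}(s)\phi)$, Poincar\'e's inequality reduces this to $\|\nabla\bm{b}\cdot\nabla\phi\|_{L^{2}}+\|\bm{b}\cdot\nabla^{2}\phi\|_{L^{2}}$, and H\"older with Lebesgue exponents $p<2/\gamma$, $p'=2p/(p-2)$ gives the claim via the 2D embedding $H^{2-\gamma}\hookrightarrow L^{\infty}$ and $\nabla H^{2-\gamma}\hookrightarrow L^{p}$.

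Once that auxiliary estimate is in hand, the three terms $I_{21}$, $I_{24}$, $I_{31}$ are essentially bookkeeping. For $I_{21}$ I would first observe that $u_{r}\cdot\nabla\xi_{r}=\operatorname{div}(u_{r}\xi_{r})$ and use the Biot--Savart bound $\|u_{r}\|_{L^{4}}\lesssim\|\xi_{r}\|_{L^{4/3}}\lesssim\|\xi_{0}\|_{L^{2}}$, so that $\|u_{r}\xi_{r}\|_{L^{4/3}}\lesssim \|\xi_{0}\|_{L^{2}}^{2}$ and hence $\|u_{r}\cdot\nabla\xi_{r}\|_{H^{-3/2}}\lesssim\|\xi_{0}\|_{L^{2}}^{2}$ via $L^{4/3}\hookrightarrow H^{-1/2}$; pairing with the $H^{3/2}$ analogue of the auxiliary estimate (obtained by a Kato--Ponce type split $\|\bm{b}\cdot\nabla\phi\|_{H^{3/2}}\lesssim\|\bm{b}\|_{L^{\infty}}\|\nabla\phi\|_{H^{3/2}}+\|\bm{b}\|_{H^{3/2}}\|\nabla\phi\|_{L^{\infty}}$ and $\|\bm{b}\|_{H^{3/2}}\le\|\bm{b}\|_{H^{2-\gamma}}$ for $\gamma<1/2$) and taking expectation via Lemma~\ref{lemma Eb} with $p=2$ yields the target. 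For $I_{24}$ I would pair $\|\Delta\xi_{r}\|_{H^{-1}}\le\|\xi_{r}\|_{H^{1}}$ with the auxiliary bound, then apply Cauchy--Schwarz in the $h$-sum and the a priori estimate $\int_{0}^{T}\|\xi_{r}\|_{H^{1}}^{2}\,dr\le\|\xi_{0}\|_{L^{2}}^{2}/(2\kappa)$. For $I_{31}=\kappa\int\Delta(\xi_{s}-\xi_{h\delta})\,ds$ I would exploit $\beta>3$ to gain two free derivatives, reducing to $\|\xi_{s}-\xi_{h\delta}\|_{H^{-\gamma}}$, and then interpolate
\[
\|\xi_{s}-\xi_{h\delta}\|_{H^{-\gamma}}\le\|\xi_{s}-\xi_{h\delta}\|_{L^{2}}^{1-\gamma}\|\xi_{s}-\xi_{h\delta}\|_{H^{-1}}^{\gamma},
\]
bounding the first factor by $2\|\xi_{0}\|_{L^{2}}$ and the second by Lemma~\ref{E H^-1} with $p=2$ and Jensen; this interpolation is precisely what produces the characteristic $\gamma$-powers on $\delta$, $(\nu\alpha)^{1/2}$ and $C_{\theta,1+\gamma,2}^{1/2}$ appearing in the target.

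The main obstacle is $I_{22}$: it contains two factors of $\bm{b}$ and a time increment $\xi_{r}-\xi_{h\delta}$, so both the Sobolev product rule and the moment counting have to be managed carefully. After two integrations by parts, the dual pairing becomes $\langle\xi_{r}-\xi_{h\delta},\bm{b}(r)\cdot\nabla(\bm{b}(s)\cdot\nabla\phi)\rangle$, which I would estimate by the $H^{-\gamma}\times H^{\gamma}$ duality. Two successive applications of Lemma~\ref{HCH} give
\[
\|\bm{b}(r)\cdot\nabla(\bm{b}(s)\cdot\nabla\phi)\|_{H^{\gamma}}\lesssim\|\bm{b}(r)\|_{H^{2-\gamma}}\|\bm{b}(s)\|_{H^{2-\gamma}}\|\phi\|_{H^{\beta}};
\]
the embedding $H^{2-\gamma}\hookrightarrow C^{\gamma+\epsilon}$ needed at this step requires $1+\gamma+2\epsilon<2-\gamma$, which is exactly where the hypothesis $\gamma<1/3$ is consumed. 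The $\xi$-factor is interpolated as above between $L^{2}$ and $H^{-1}$, with Lemma~\ref{E H^-1} at $p=2$ controlling the $H^{-1}$ part, and the expectation of the triple product $\|\bm{b}(r)\|_{H^{2-\gamma}}\|\bm{b}(s)\|_{H^{2-\gamma}}\|\xi_{r}-\xi_{h\delta}\|_{H^{-1}}^{\gamma}$ is handled by a three-way H\"older inequality with exponents $(4,4,2)$; Remark~\ref{Jensen C} then converts the resulting factor $C_{\theta,2-\gamma,2}^{\gamma/2}$ into $C_{\theta,2-\gamma,4}^{\gamma/4}$, which combines with the $C_{\theta,2-\gamma,4}^{1/2}$ coming from the two $4$th-moments of $\bm{b}$ to produce exactly $C_{\theta,2-\gamma,4}^{1/2+\gamma/4}$. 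The per-$h$ bound carries a factor $\delta^{2+\gamma}$, which after summing over $T/\delta$ subintervals yields $T\delta^{1+\gamma}$, matching the target. Keeping every intermediate Sobolev index pinned at $2-\gamma$ (rather than $2-\gamma+\epsilon$) is the delicate bookkeeping point, and it is enabled precisely by the divergence-free/zero-mean trick and by the restriction $\gamma<1/3$.
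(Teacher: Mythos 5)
Your proposal is correct and follows essentially the same route as the paper: reduce the supremum to a sum over $h$, use duality and the divergence-free structure to transfer derivatives onto the test function, control $\|\nabla(\bm{b}(s)\cdot\nabla\phi)\|$ by $\|\bm{b}(s)\|_{H^{2-\gamma}}\|\phi\|_{H^{\beta}}$ via the zero-mean/Poincar\'e trick, interpolate the increments $\xi_r-\xi_{h\delta}$ between $H^{-1}$ and $L^2$, and close with Lemmas \ref{lemma Eb} and \ref{E H^-1}. Your intermediate Sobolev bookkeeping differs in a few places (the $L^{4/3}\hookrightarrow H^{-1/2}$ route for $I_{21}$, and the direct $H^{-1}$--$H^{1}$ pairing for $I_{24}$, which in fact gives a bound of order $\kappa^{1/2}\delta T^{1/2}$ that dominates the stated $\kappa^{\frac{1+\gamma}{2}}\delta^{\frac{1+\gamma}{2}}T$ only up to a $\kappa$-dependent constant), but these variations are cosmetic and yield the claimed estimates.
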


	\begin{proof}
		First, we consider the term
		$$	I_{21}(h)=\int_{h\delta}^{(h+1)\delta}\int_{h\delta}^{s}\bm{b}(s) \cdot \nabla(u_r \cdot \nabla \xi_r) \, drds.$$
		Taking a test function $\phi \in H^{\beta}(\T^2)$ and integrating by parts, it holds
		$$\aligned
		\big|\big\langle\bm{b}(s) \cdot \nabla(u_r \cdot \nabla \xi_r) ,\phi \big\rangle\big| & =\big|\big\langle u_r \cdot \nabla(\bm{b}(s) \cdot \nabla \phi) , \xi_r \big\rangle \big|\\
		&\leq \|u_r \cdot \nabla(\bm{b}(s) \cdot \nabla \phi) \|_{L^2} \|\xi_r\|_{L^2} \\
		&\le \|u_r\|_{H^{1-\gamma}} \|\nabla(\bm{b}(s)\cdot\nabla\phi)\|_{H^{\gamma}}\|\xi_0\|_{L^2},
		\endaligned $$
		where the last step follows from Lemma \ref{HHH}; again by Lemma \ref{HHH}, for $\gamma \in (0,\frac{1}{2})$, we have
		\begin{equation}\label{decompose H^g}
			\begin{split}
				\|\nabla(\bm{b}(s)\cdot\nabla\phi)\|_{H^{\gamma}} & \leq \|\nabla \bm{b}(s) \cdot \nabla \phi \|_{H^{\gamma}} +\|\bm{b}(s)\cdot \nabla^2 \phi\|_{H^{\gamma}}\\
				&\lesssim \|\nabla\bm{b}(s)\|_{H^{1-\gamma}} \|\nabla\phi\|_{H^{2\gamma}}+\|\bm{b}(s)\|_{H^{1-\gamma}} \|\nabla^2\phi\|_{H^{2\gamma}}\\
				&\lesssim \|\bm{b}(s)\|_{H^{2-\gamma}} \|\phi\|_{H^{1+2\gamma}} + \|\bm{b}(s)\|_{H^{1-\gamma}}\|\phi\|_{H^{2+2\gamma}}\\
				&\lesssim \|\bm{b}(s)\|_{H^{2-\gamma}} \|\phi\|_{H^{2+2\gamma}}.
			\end{split}
		\end{equation}
		Combining the above two estimates, we can easily get for $\beta>3$,
		$$\big\|\bm{b}(s) \cdot \nabla(u_r \cdot \nabla \xi_r)\big\|_{H^{-\beta}} \lesssim \|u_r\|_{H^{1-\gamma}} \|\bm{b}(s)\|_{H^{2-\gamma}} \|\xi_0\|_{L^2}  \lesssim \|\xi_0\|^2_{L^2} \, \|\bm{b}(s)\|_{H^{2-\gamma}} .$$
		Furthermore,
		\begin{equation}\nonumber
			\begin{split}
				\big\|I_{21}(h)\big\|_{H^{-\beta}}  \lesssim  \|\xi_0\|^2_{L^2}  \int_{h\delta}^{(h+1)\delta} \int_{h\delta}^s \|\bm{b}(s)\|_{H^{2-\gamma}}\, drds \lesssim  \delta \,  \|\xi_0\|^2_{L^{2}}   \int_{h\delta}^{(h+1)\delta} \|\bm{b}(s)\|_{H^{2-\gamma}} \, ds .
			\end{split}
		\end{equation}	
		Taking supremum and then expectation, Lemma \ref{lemma Eb} yields
		\begin{equation}\nonumber
			\begin{split}
				\mathbb{E} \bigg[\sup_{1\leq m<n \leq T/\delta-1}\Big\|\sum_{h=m}^{n}I_{21}(h)\Big\|_{H^{-\beta}}\bigg] &\leq \sum_{h=1}^{T/\delta-1} \mathbb{E}\Big[\big\|I_{21}(h)\big\|_{H^{-\beta}}\Big]\\
				&\lesssim  \delta \, \|\xi_0\|^2_{L^{2}}    \int_{0}^{T} \Big(\mathbb{E}  \big[ \|\bm{b}(s)\|^2_{H^{2-\gamma}} \big]\Big)^{\frac{1}{2}}ds \\
				&\lesssim \delta \nu^{\frac{1}{2}} \alpha^{\frac{1}{2}} \, T C^{1/2}_{\theta,2-\gamma,2} \,  \|\xi_{0}\|^2_{L^2}   .
			\end{split}
		\end{equation}
		
		Let us turn to the term
		$$	I_{22}(h)=\int_{h\delta}^{(h+1)\delta}\int_{h\delta}^{s}\bm{b}(s) \cdot \nabla \big(\bm{b}(r)\cdot \nabla (\xi_r-\xi_{h\delta})\big) \, drds.$$
		For any test function $\phi \in H^{\beta}(\T^2)$ with $\beta>3$, it holds
		$$\big|\big\langle \bm{b}(s) \cdot \nabla \big(\bm{b}(r)\cdot \nabla (\xi_r-\xi_{h\delta})\big), \phi \big \rangle\big|=\big|\big\langle \bm{b}(r) \cdot \nabla(\bm{b}(s) \cdot \nabla \phi) , \xi_r-\xi_{h\delta} \big\rangle \big|.$$
		By Lemma \ref{HCH} and Lemma \ref{interpolation}, for $\gamma \in (0,\frac{1}{2})$, we have
		\begin{equation}\nonumber
			\begin{split}
				&\quad \  \big|\big\langle \bm{b}(r) \cdot \nabla(\bm{b}(s) \cdot \nabla \phi) , \xi_r-\xi_{h\delta} \big\rangle \big|\\
				&\leq \|\bm{b}(r) \cdot \nabla(\bm{b}(s) \cdot \nabla \phi)\|_{H^{\gamma}} \|\xi_r-\xi_{h\delta}\|_{H^{-\gamma}}  \\
				&\lesssim  \|\bm{b}(r)\|_{C^{2\gamma}}  \|\nabla(\bm{b}(s)\cdot \nabla \phi)\|_{H^{\gamma}} \|\xi_r-\xi_{h\delta}\|_{H^{-\gamma}}  \\
				&\lesssim   \|\bm{b}(r)\|_{H^{1+2\gamma}}  \|\nabla(\bm{b}(s)\cdot \nabla \phi)\|_{H^{\gamma}} \|\xi_r-\xi_{h\delta}\|^{\gamma}_{H^{-1}} \|\xi_r-\xi_{h\delta}\|^{1-\gamma}_{L^{2}}  .
			\end{split}
		\end{equation}
		Then for $\gamma \in (0,\frac{1}{3})$, we use \eqref{decompose H^g} to get
		$$\big\|\bm{b}(s) \cdot \nabla \big(\bm{b}(r)\cdot \nabla (\xi_r-\xi_{h\delta})\big)\big\|_{H^{-\beta}} \lesssim \|\bm{b}(r)\|_{H^{2-\gamma}}  \|\bm{b}(s)\|_{H^{2-\gamma}} \|\xi_r-\xi_{h\delta}\|^{\gamma}_{H^{-1}} \|\xi_r-\xi_{h\delta}\|^{1-\gamma}_{L^{2}}. $$
		Hence we have
		\begin{equation}\nonumber
			\begin{split}
				\big\|I_{22}(h)\big\|_{H^{-\beta}}  &\lesssim  \int_{h\delta}^{(h+1)\delta} \int_{h\delta}^{s} \|\bm{b}(r)\|_{H^{2-\gamma}}  \|\bm{b}(s)\|_{H^{2-\gamma}} \|\xi_r-\xi_{h\delta}\|^{\gamma}_{H^{-1}} \|\xi_r-\xi_{h\delta}\|^{1-\gamma}_{L^{2}} \, drds \\
				&\lesssim  \|\xi_0\|^{1-\gamma}_{L^2}  \int_{h\delta}^{(h+1)\delta} \|\bm{b}(r)\|_{H^{2-\gamma}} \|\xi_{r}-\xi_{h\delta}\|^{\gamma}_{H^{-1}}  \, dr \int_{h\delta}^{(h+1)\delta} \|\bm{b}(s)\|_{H^{2-\gamma}} \, ds.
			\end{split}
		\end{equation}
		We take expectation on the above formula and obtain
		\begin{equation}\nonumber
			\begin{split}
				\mathbb{E}\Big[\big\|I_{22}(h)\big\|_{H^{-\beta}} \Big] & \lesssim  \|\xi_0\|^{1-\gamma}_{L^2} \, \bigg[\mathbb{E}\Big( \int_{h\delta}^{(h+1)\delta} \|\bm{b}(r)\|_{H^{2-\gamma}}  \|\xi_{r}-\xi_{h\delta}\|^{\gamma}_{H^{-1}} \, dr \Big)^2\bigg]^{\frac{1}{2}}\\
				&\quad \times \bigg[\mathbb{E}\Big(\int_{h\delta}^{(h+1)\delta} \|\bm{b}(s)\|_{H^{2-\gamma}} \, ds\Big)^2\bigg]^{\frac{1}{2}}.
			\end{split}
		\end{equation}
		Now we use H\"older's inequality to further deal with the two terms respectively. By Lemma \ref{lemma Eb} and Lemma \ref{E H^-1},
		\begin{equation}\label{I22 E1}
			\begin{split}
				&\quad \ \mathbb{E}\bigg( \int_{h\delta}^{(h+1)\delta} \|\bm{b}(r)\|_{H^{2-\gamma}} \|\xi_{r}-\xi_{h\delta}\|^{\gamma}_{H^{-1}}  \, dr \bigg)^2\\
				&\leq \mathbb{E} \bigg[\delta \int_{h\delta}^{(h+1)\delta} \|\bm{b}(r)\|^2_{H^{2-\gamma}} \|\xi_{r}-\xi_{h\delta}\|^{2\gamma}_{H^{-1}} \ dr \bigg]\\
				&\leq \delta \int_{h\delta}^{(h+1)\delta} \Big(\mathbb{E} \big[ \|\bm{b}(r)\|^4_{H^{2-\gamma}}  \big]\Big)^\frac{1}{2}  \Big(\mathbb{E} \big[ \|\xi_{r}-\xi_{h\delta}\|^{4\gamma}_{H^{-1}} \big] \Big)^\frac{1}{2} dr\\
				&\lesssim \delta^{2(1+\gamma)} \nu^{1+\gamma} \alpha^{1+\gamma} \, C^{1/2}_{\theta,2-\gamma,4} \, C^{\gamma}_{\theta,1+\gamma,2} \, \|\xi_0\|^{2\gamma}_{L^2} \big(1+\|\xi_0\|_{L^2} \big)^{2\gamma} ,
			\end{split}
		\end{equation}
		to get the last line, we have used $\mathbb{E} \big[ \|\xi_{r}-\xi_{h\delta}\|^{4\gamma}_{H^{-1}} \big] \leq \Big(\mathbb{E} \big[ \|\xi_{r}-\xi_{h\delta}\|^{2}_{H^{-1}} \big] \Big)^{2\gamma}$. Besides, for the second term, Lemma \ref{lemma Eb} yields		
		\begin{equation}\label{I22 E2}
			\mathbb{E}\bigg(\int_{h\delta}^{(h+1)\delta} \|\bm{b}(s)\|_{H^{2-\gamma}} \, ds\bigg)^2 \leq \, \delta \int_{h\delta}^{(h+1)\delta} \mathbb{E} \big[\|\bm{b}(s)\|^2_{H^{2-\gamma}} \big] ds \,  \lesssim \, \delta^2 \nu \alpha \, C_{\theta,2-\gamma,2}.
		\end{equation}
		Hence we can combine \eqref{I22 E1} and \eqref{I22 E2} to get
		\begin{equation}\nonumber
			\mathbb{E}\Big[\big\|I_{22}(h)\big\|_{H^{-\beta}} \Big] \lesssim \delta^{2+\gamma} \nu^{1+\frac{\gamma}{2}} \alpha^{1+\frac{\gamma}{2}}   \, C^{1/4}_{\theta,2-\gamma,4} \, C^{\gamma/2}_{\theta,1+\gamma,2} \,  C^{1/2}_{\theta,2-\gamma,2} \,   \|\xi_0\|_{L^2} \big(1+\|\xi_0\|_{L^2} \big)^{\gamma} .
		\end{equation}
		Taking supremum, Remark \ref{Jensen C} yields
		\begin{equation}\nonumber
			\begin{split}
				\mathbb{E} \bigg[\sup_{1\leq m<n \leq T/\delta-1}\Big\|\sum_{h=m}^{n}I_{22}(h)\Big\|_{H^{-\beta}}\bigg] &\leq \sum_{h=1}^{T/\delta-1} \mathbb{E}\Big[\big\|I_{22}(h)\big\|_{H^{-\beta}} \Big]\\
				&\lesssim  \delta^{1+\gamma} \nu^{1+\frac{\gamma}{2}} \alpha^{1+\frac{\gamma}{2}}  \, T C^{\frac{1}{2}+\frac{\gamma}{4}}_{\theta,2-\gamma,4}  \, \|\xi_0\|_{L^2} \big(1+\|\xi_0\|_{L^2} \big)^{\gamma} .
			\end{split}
		\end{equation}
		
		As for the term
		$$	I_{24}(h)=-\kappa \int_{h\delta}^{(h+1)\delta}\int_{h\delta}^{s}\bm{b}(s) \cdot \nabla (\Delta \xi_r)\, drds,$$
		notice that for every test function $\phi \in H^{\beta}(\T^2)$, it holds
		$\big|\langle \bm{b}(s) \cdot \nabla (\Delta \xi_r), \phi \rangle\big|=\big|\langle \Delta(\bm{b}(s) \cdot \nabla \phi) , \xi_r \rangle \big| ;$
		and by Lemma \ref{interpolation}, for $\gamma \in (0,1)$, we have
		\begin{equation}\nonumber
			\big|\langle \Delta(\bm{b}(s) \cdot \nabla \phi) , \xi_r \rangle \big| \leq \| \Delta(\bm{b}(s) \cdot \nabla \phi) \|_{H^{\gamma-1}} \| \xi_r\|_{H^{1-\gamma}}  \lesssim \| \nabla(\bm{b}(s) \cdot \nabla \phi) \|_{H^{\gamma}} \| \xi_r \|^{1-\gamma}_{H^1} \| \xi_0 \|^{\gamma}_{L^2}.
		\end{equation}	
		Then we can use \eqref{decompose H^g} to obtain for $\beta>3$ and $\gamma \in (0,\frac{1}{2})$,
		$$\big\|\bm{b}(s) \cdot \nabla (\Delta \xi_r)\big\|_{H^{-\beta}} \lesssim \|\bm{b}(s)\|_{H^{2-\gamma}} \| \xi_r \|^{1-\gamma}_{H^1} \| \xi_0 \|^{\gamma}_{L^2}.$$
		Furthermore, H\"older's inequality and \eqref{priori estimates 1} yield
		\begin{equation}\nonumber
			\begin{split}
				\big\|I_{24}(h)\big\|_{H^{-\beta}} &\lesssim  \kappa \, \| \xi_0 \|^{\gamma}_{L^2} \int_{h\delta}^{(h+1)\delta}  \| \xi_r \|^{1-\gamma}_{H^1}  \,  dr  \int_{h\delta}^{(h+1)\delta}  \| \bm{b}(s)\|_{H^{2-\gamma}}\, ds\\
				&\leq \kappa \,  \| \xi_0 \|^{\gamma}_{L^2} \, \delta^{\frac{1+\gamma}{2}} \Big( \int_{h\delta}^{(h+1)\delta}  \|\xi_r\|^{2}_{H^1} \, dr \Big)^{\frac{1-\gamma}{2}}   \int_{h\delta}^{(h+1)\delta}  \| \bm{b}(s)\|_{H^{2-\gamma}} \,  ds\\
				&\lesssim \kappa^{\frac{1+\gamma}{2}} \delta^{\frac{1+\gamma}{2}} \,  \|\xi_{0}\|_{L^2}  \int_{h\delta}^{(h+1)\delta}  \| \bm{b}(s)\|_{H^{2-\gamma}} \, ds.
			\end{split}
		\end{equation}	
		By Lemma \ref{lemma Eb}, we take expectation and deduce
		\begin{equation}\nonumber
			\begin{split}
				\mathbb{E} \bigg[\sup_{1\leq m<n \leq T/\delta-1}\Big\|\sum_{h=m}^{n}I_{24}(h)\Big\|_{H^{-\beta}}\bigg] &\leq \sum_{h=1}^{T/\delta-1} \mathbb{E}\Big[\big\|I_{24}(h)\big\|_{H^{-\beta}}\Big]\\
				&\lesssim \kappa^{\frac{1+\gamma}{2}} \delta^{\frac{1+\gamma}{2}}  \,  \|\xi_{0}\|_{L^2}  \int_{0}^{T} \Big(\mathbb{E} \big[ \| \bm{b}(s)\|^2_{H^{2-\gamma}} \big]\Big)^{\frac{1}{2}} \, ds \\
				&\lesssim \kappa^{\frac{1+\gamma}{2}} \delta^{\frac{1+\gamma}{2}} \nu^{\frac{1}{2}} \alpha^{\frac{1}{2}} \, T C^{1/2}_{\theta,2-\gamma,2} \,  \|\xi_{0}\|_{L^2} .
			\end{split}
		\end{equation}
		
		Finally, let us estimate the term
		$$	\big\|I_{31}(h)\big\|_{H^{-\beta}} \le \kappa \int_{h\delta}^{(h+1)\delta} \big\| \Delta(\xi_s - \xi_{h\delta}) \big\|_{H^{-\beta}} \, ds \le \kappa \int_{h\delta}^{(h+1)\delta} \| \xi_s - \xi_{h\delta}\|_{H^{-\gamma}} \, ds, $$
		for any $\gamma \in (0,1)$. Then Lemma \ref{interpolation} yields
		\begin{equation}\nonumber
			\begin{split}
				\big\|I_{31}(h)\big\|_{H^{-\beta}} &\leq \kappa \,  \int_{h\delta}^{(h+1)\delta} \|\xi_s-\xi_{h\delta}\|^{\gamma}_{H^{-1}} \|\xi_s-\xi_{h\delta}\|^{1-\gamma}_{L^{2}} \, ds\\
				&\lesssim \kappa \, \|\xi_{0}\|_{L^2}^{1-\gamma} \int_{h\delta}^{(h+1)\delta} \|\xi_s-\xi_{h\delta}\|^{\gamma}_{H^{-1}}\, ds.
			\end{split}
		\end{equation}
		By Lemma \ref{E H^-1}, we arrive at
		\begin{equation}\nonumber
			\begin{split}
				\mathbb{E} \bigg[\sup_{1\leq m<n \leq T/\delta-1}\Big\|\sum_{h=m}^{n}I_{31}(h)\Big\|_{H^{-\beta}}\bigg] &\leq \sum_{h=1}^{T/\delta-1} \mathbb{E}\Big[\big\|I_{31}(h)\big\|_{H^{-\beta}}\Big]\\
				&\lesssim \kappa \,  \|\xi_{0}\|_{L^2}^{1-\gamma} \sum_{h=1}^{T/\delta-1} \int_{h\delta}^{(h+1)\delta} \Big(\mathbb{E} \big[\|\xi_{(h+1)\delta}-\xi_{h\delta}\|^{2}_{H^{-1}}\big]\Big)^{\frac{\gamma}{2}} \, ds\\
				&\lesssim \kappa \delta^{\gamma} \nu^{\frac{\gamma}{2}} \alpha^{\frac{\gamma}{2}} \, T C^{\gamma/2}_{\theta,1+\gamma,2} \,  \|\xi_{0}\|_{L^2} \big(1+\|\xi_{0}\|_{L^2} \big)^{\gamma} .
			\end{split}
		\end{equation}
	\end{proof}

	\subsection{The term $I_{231}(h)$}\label{subsec-231}
	
	In this section, we follow \cite{Pappalettera} and use Nakao's method (cf. \cite{Nakao}) to estimate the term
	\begin{equation}\nonumber
		I_{231}(h)=4\nu \sum_{k,k'\in \mathbb{Z}_0^2} \theta_{k}\sigma_{k} \cdot \nabla( \theta_{k'}\sigma_{k'} \cdot \nabla \xi_{h\delta})  \bigg( \int_{h\delta}^{(h+1)\delta}\int_{h\delta}^{s} \eta^{\alpha,k}(s) \, \eta^{\alpha,{k'}}(r)  \, drds-   \delta_{k,k'} \frac{\delta}{2}\bigg).
	\end{equation}
	
	\begin{lemma}\label{nakao}
		Let $\delta\in(0,1)$ satisfy $\delta^4\alpha^3 \lesssim 1$, then the following inequality holds for any $T\geq 1$:
		$$ \mathbb{E}\bigg[\sup_{1\leq m<n \leq T/\delta-1} \Big\| \sum_{h=m}^{n} I_{231}(h)\Big\|_{H^{-\beta}}\bigg]  \lesssim  \nu \delta^{-1} \alpha^{-1} TD_{\theta,\gamma} \,  \|\xi_{0}\|_{L^{2}} , $$	
		where $D_{\theta,\gamma}:=\big(\sum_{k\in \mathbb{Z}_0^2} |\theta_{k}| \, |k|^{2-\gamma}\big)^2$ is a finite constant depending on $\theta\in \ell^2(\Z^2_0)$ and $\gamma$.
	\end{lemma}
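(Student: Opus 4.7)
The plan is to follow Nakao's summation-by-parts technique as in \cite{Nakao,Pappalettera}, decomposing
$$c_{k,k'}(h):=\int_{h\delta}^{(h+1)\delta}\int_{h\delta}^{s}\eta^{\alpha,k}(s)\eta^{\alpha,k'}(r)\,drds-\delta_{k,k'}\delta/2$$
by exploiting the OU SDE. Using $d\eta^{\alpha,k}=-\alpha\eta^{\alpha,k}\,dt+\alpha\,dW^k$, one rewrites $\int_{h\delta}^s\eta^{\alpha,k'}(r)\,dr=(W^{k'}_s-W^{k'}_{h\delta})-\alpha^{-1}(\eta^{\alpha,k'}(s)-\eta^{\alpha,k'}(h\delta))$; substitution yields a decomposition of $c_{k,k'}(h)$ into three parts: a stochastic-integral piece $\int\eta^{\alpha,k}(s)(W^{k'}_s-W^{k'}_{h\delta})ds$, a boundary piece $\alpha^{-1}\eta^{\alpha,k'}(h\delta)\int\eta^{\alpha,k}(s)ds$, and a correction $-\alpha^{-1}\int\eta^{\alpha,k}(s)\eta^{\alpha,k'}(s)ds$. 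The crucial cancellation occurs on the diagonal $k=k'$: by stationarity $\E(\eta^{\alpha,k}(s))^2=\alpha/2$, so the mean of the correction exactly equals $-\delta/2$, cancelling the $\delta_{k,k'}\delta/2$ subtraction and leaving an $O(\alpha^{-1})$ deterministic drift; off the diagonal the correction has mean zero by independence of $\eta^{\alpha,k}$ and $\eta^{\alpha,k'}$.

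The next step is to dualize against a test function $\phi\in H^{\beta}(\T^2)$ with $\beta>3$ and exploit the divergence-free property of $\sigma_k$. Two integrations by parts give
$$\langle I_{231}(h),\phi\rangle=4\nu\sum_{k,k'\in\Z_0^2}\theta_k\theta_{k'}\,c_{k,k'}(h)\,\bigl\langle\xi_{h\delta},\,\sigma_{k'}\cdot\nabla(\sigma_k\cdot\nabla\phi)\bigr\rangle,$$
and an argument analogous to \eqref{decompose H^g} yields $\|\sigma_{k'}\cdot\nabla(\sigma_k\cdot\nabla\phi)\|_{L^2}\lesssim|k|^{2-\gamma}|k'|^{2-\gamma}\|\phi\|_{H^{\beta}}$. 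Combined with $\|\xi_{h\delta}\|_{L^2}\le\|\xi_0\|_{L^2}$, summing over $(k,k')$ with weights $|\theta_k||\theta_{k'}||k|^{2-\gamma}|k'|^{2-\gamma}$ produces the constant $D_{\theta,\gamma}=(\sum_k|\theta_k||k|^{2-\gamma})^2$.

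The final step is to bound $\sum_{h=m}^{n} c_{k,k'}(h)$ in $L^{1}(\Omega)$. The deterministic drift, nonzero only on the diagonal, sums to $|n-m|\cdot O(\alpha^{-1})\le T/(\delta\alpha)$. For the centered fluctuations, the pieces of $c_{k,k'}(h)$ have $L^2$ norm of order $\delta$ (computed using Lemma \ref{lemma Eb} and the OU covariance $\E[\eta^{\alpha,k}(s)\eta^{\alpha,k}(t)]=(\alpha/2)e^{-\alpha|s-t|}$); since OU decorrelates on the scale $1/\alpha\le\delta$, the centered sum over $h$ has $L^2$ norm at most $\sqrt{T\delta}$. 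Under the hypothesis $\delta^4\alpha^3\lesssim 1$ one has $\delta^3\alpha^2\lesssim 1$, which gives $\sqrt{T\delta}\lesssim T/(\delta\alpha)$ for $T\ge 1$, so the drift dominates. Taking supremum over $m<n$ then introduces no extra loss via a standard maximal inequality for the running partial sums.

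The main obstacle is precisely this final moment balance. Since $\E|c_{k,k'}(h)|\sim\delta$ per single $h$, which exceeds $\alpha^{-1}$ whenever $\delta\alpha\gtrsim 1$, a naive triangle inequality across $h$ is too crude. The gain arises from two sources that must be used together: the $\delta/2$ subtraction, which removes the leading deterministic $\delta/2$ term from the diagonal iterated integral and leaves only an $O(\alpha^{-1})$ drift; and the weak correlation of $c_{k,k'}(h)$ across disjoint intervals $[h\delta,(h+1)\delta]$, which prevents the fluctuations from accumulating linearly. The constraint $\delta^4\alpha^3\lesssim 1$ is precisely what ensures these two contributions combine to the target scaling $\nu\delta^{-1}\alpha^{-1}T D_{\theta,\gamma}\|\xi_0\|_{L^2}$.
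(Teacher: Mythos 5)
Your overall strategy is the right one and matches the paper's in spirit: split the iterated OU integrals $c_{k,k'}(h)$ into a drift that is $O(\alpha^{-1})$ per interval after the $\delta_{k,k'}\delta/2$ subtraction (summed by the triangle inequality to give $T\delta^{-1}\alpha^{-1}$) and a fluctuation that gains a square root over the $T/\delta$ intervals, with the constant $D_{\theta,\gamma}$ coming from $\|\theta_k\sigma_k\|_{H^{2-\gamma}}\sim|\theta_k||k|^{2-\gamma}$ exactly as in the paper's estimate \eqref{Mn}. However, there is a genuine gap in how you sum the fluctuations and take the supremum over $m<n$. You center $c_{k,k'}(h)$ at its \emph{unconditional} mean; the resulting increments are not martingale differences with respect to $\{\mathcal{F}_{h\delta}\}$, because $\E[c_{k,k'}(h)\,|\,\mathcal{F}_{h\delta}]$ genuinely depends on $\eta^{\alpha,k}(h\delta)\eta^{\alpha,k'}(h\delta)$ (see \eqref{conditional expectation of ckk}). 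Consequently neither the orthogonality giving $\|\sum_h(\cdot)\|_{L^2(\Omega)}\lesssim\sqrt{T/\delta}\cdot(\text{per-term norm})$ nor any ``standard maximal inequality'' for the running partial sums is available without further work: you would have to estimate the cross-covariances of the centered $c_{k,k'}(h)$ across distinct $h$ and then invoke a non-martingale maximal inequality, neither of which you do. The paper avoids this by centering at the \emph{conditional} expectation: $M_n$ built from $c_{k,k'}(h)-\E[c_{k,k'}(h)|\mathcal{F}_{h\delta}]$ is a genuine $H^{-\beta}$-valued discrete martingale, so Doob's inequality controls the supremum, and the leftover $\mathcal{F}_{h\delta}$-measurable piece $\E[c_{k,k'}(h)|\mathcal{F}_{h\delta}]-\delta_{k,k'}\delta/2$, which is $O(\alpha^{-1})$ in $L^2(\Omega)$, goes into the remainder $R_n$ summed crudely over all $h$.

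Two smaller points. First, your accounting of the diagonal cancellation is off: the correction $-\alpha^{-1}\int\eta^{\alpha,k}(s)^2\,ds$ has mean $-\delta/2$, which does not ``cancel'' the subtracted $\delta/2$ — rather, the stochastic-integral piece $\int\eta^{\alpha,k}(s)(W^k_s-W^k_{h\delta})\,ds$ is \emph{not} mean-zero on the diagonal (its mean is $\delta-\alpha^{-1}(1-e^{-\alpha\delta})$), and it is the sum of all three means, $\delta/2-\tfrac{1}{2\alpha}(1-e^{-\alpha\delta})$, that leaves an $O(\alpha^{-1})$ drift after subtracting $\delta/2$. Second, your claimed per-interval $L^2$ size $O(\delta)$ for the centered fluctuation is actually sharper than the paper's bound $\E[c_{k,k'}(h)^2]\lesssim\delta^3\alpha+\delta^2$ and is correct (it follows from the Wick formula for the second Gaussian chaos), but you would need to justify it, since the paper's cruder Cauchy--Schwarz route gives $\delta^{3}\alpha$ and it is precisely the hypothesis $\delta^4\alpha^3\lesssim1$ that makes the cruder bound suffice.
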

	
	\begin{proof}
		For convenience, we define
		$$	c_{k,k'}(h):=\int_{h\delta}^{(h+1)\delta}\int_{h\delta}^{s} \eta^{\alpha,k}(s) \, \eta^{\alpha,{k'}}(r)  \, drds.$$
		By \cite[Lemma 5.2]{Pappalettera}, the conditional expectation of $c_{k,k'}(h)$ with respect to $\mathcal{F}_{h\delta}$ is
		\begin{equation}\label{conditional expectation of ckk}
			\mathbb{E}\big[c_{k,k'}(h)|\mathcal{F}_{h\delta}\big]=\eta^{\alpha,k}(h\delta) \, \eta^{\alpha,{k'}}(h\delta) \, \frac{(1-e^{-\alpha\delta})^2}{2\alpha^{2}}+ \delta_{k,k'}\bigg[\frac{\delta}{2}+ \frac1\alpha \Big(e^{-\alpha\delta}-1 +\frac{1}{4}(1-e^{-2\alpha\delta}) \Big)\bigg].
		\end{equation}
		
		Now we define two processes as follows:
		$$\aligned	
		M_n&= \sum_{h=1}^{n-1}\sum_{k,k'\in \mathbb{Z}_0^2} \theta_{k}\sigma_{k} \cdot \nabla( \theta_{k'}\sigma_{k'} \cdot \nabla \xi_{h\delta})  \Big( c_{k,k'}(h)- \mathbb{E}\big[c_{k,k'}(h)|\mathcal{F}_{h\delta}\big]\Big), \\
		R_n& = \sum_{h=1}^{n-1}\sum_{k,k'\in \mathbb{Z}_0^2}\theta_{k}\sigma_{k} \cdot \nabla( \theta_{k'}\sigma_{k'} \cdot \nabla \xi_{h\delta})  \Big( \mathbb{E}[c_{k,k'}(h)|\mathcal{F}_{h\delta}]- \delta_{k,k'} \frac{\delta}{2} \Big).
		\endaligned $$
		Notice that $\left\{M_n\right\}_{n=1,\ldots,T/\delta}$ is a $H^{-\beta}$-valued discrete martingale with respect to $\left\{ \mathcal{F}_{n\delta}\right\}_{n=1,\ldots,T/\delta}$, hence by Doob's maximal inequality,
		\begin{equation} \label{supM_n}
			\begin{split}
				&\quad \ \mathbb{E}\bigg[\sup_{1< n \leq T/\delta}\big\|M_n\big\|_{H^{-\beta}}^2\bigg]  \lesssim \mathbb{E}\Big[\big\|M_{T/\delta}\big\|_{H^{-\beta}}^2\Big] \\
				&\leq \sum_{h=1}^{T/\delta-1}\mathbb{E}\, \bigg\|\sum_{k,k'\in \mathbb{Z}_0^2}\theta_{k}\sigma_{k} \cdot \nabla( \theta_{k'}\sigma_{k'} \cdot \nabla \xi_{h\delta})   \Big( c_{k,k'}(h)- \mathbb{E}\big[c_{k,k'}(h)|\mathcal{F}_{h\delta}\big]\Big)\bigg\|_{H^{-\beta}}^2 \\
				&\leq \sum_{h=1}^{T/\delta-1}\mathbb{E} \bigg(\sum_{k,k'\in \mathbb{Z}_0^2} \big\|\theta_{k}\sigma_{k} \cdot \nabla( \theta_{k'}\sigma_{k'} \cdot \nabla \xi_{h\delta}) \big\|_{H^{-\beta}} \Big|c_{k,k'}(h)- \mathbb{E}\big[c_{k,k'}(h)|\mathcal{F}_{h\delta}\big]\Big| \bigg)^2.
			\end{split}
		\end{equation}
		
		We first give the following estimate. For any test function $\phi \in H^{\beta}(\T^2)$, it holds
		$$\big|\big\langle \theta_{k}\sigma_{k} \cdot \nabla( \theta_{k'}\sigma_{k'} \cdot \nabla \xi_{h\delta}) , \phi \big\rangle\big|=\big|\big\langle \theta_{k'}\sigma_{k'} \cdot \nabla(\theta_{k}\sigma_{k} \cdot \nabla \phi),\xi_{h\delta}\big\rangle \big| ;$$
		by Lemma \ref{HHH}, for $\gamma \in (0,\frac{1}{2})$, we use \eqref{decompose H^g} to obtain
		\begin{equation}\nonumber
			\begin{split}
				\big|\big\langle \theta_{k'}\sigma_{k'} \cdot \nabla(\theta_{k}\sigma_{k} \cdot \nabla \phi),\xi_{h\delta}\big\rangle \big| & \lesssim \| \nabla (\theta_k \sigma_{k} \cdot \nabla \phi )\|_{L^{2}} \|\theta_{k'} \sigma_{k'} \xi_{h\delta} \|_{L^{2}} \\
				&\lesssim \|\theta_{k}\sigma_k\|_{H^{2-\gamma}} \|\phi\|_{H^{2+2\gamma}} \|\theta_{k'} \sigma_{k'}\|_{L^{\infty}} \|\xi_{h\delta}\|_{L^{2}}\\
				& \lesssim \|\theta_{k}\sigma_k\|_{H^{2-\gamma}} \|\phi\|_{H^{2+2\gamma}} \|\theta_{k'} \sigma_{k'}\|_{H^{2-\gamma}} \|\xi_{0}\|_{L^{2}}.
			\end{split}
		\end{equation}
		Combining the above two results, we get for $\beta>3$,
		\begin{equation}\label{Mn}
			\big\| \theta_{k}\sigma_{k} \cdot \nabla( \theta_{k'}\sigma_{k'} \cdot \nabla \xi_{h\delta}) \big\|_{H^{-\beta}} \lesssim  \|\theta_{k}\sigma_k\|_{H^{2-\gamma}}  \|\theta_{k'} \sigma_{k'}\|_{H^{2-\gamma}} \|\xi_{0}\|_{L^{2}},
		\end{equation}
		hence,
		\begin{equation}\nonumber
			\begin{split}
				&\quad \  \mathbb{E} \bigg(\sum_{k,k'\in \mathbb{Z}_0^2} \big\|\theta_{k}\sigma_{k} \cdot \nabla( \theta_{k'}\sigma_{k'} \cdot \nabla \xi_{h\delta})\big\|_{H^{-\beta}} \Big|c_{k,k'}(h)- \mathbb{E}\big[c_{k,k'}(h)|\mathcal{F}_{h\delta}\big]\Big| \bigg)^2\\
				&\lesssim  \|\xi_{0}\|^2_{L^{2}} \, \mathbb{E}\bigg(\sum_{k,k'\in \mathbb{Z}_0^2}  \|\theta_{k}\sigma_k\|_{H^{2-\gamma}}  \|\theta_{k'} \sigma_{k'}\|_{H^{2-\gamma}} \, \Big|c_{k,k'}(h)- \mathbb{E}\big[c_{k,k'}(h)|\mathcal{F}_{h\delta}\big]\Big|\bigg)^2.
			\end{split}
		\end{equation}
		We regard each term of $\|\theta_{k}\sigma_k\|_{H^{2-\gamma}}  \|\theta_{k'} \sigma_{k'}\|_{H^{2-\gamma}}$ as the product of their square roots; then the Cauchy-Schwartz inequality and the projective property of conditional expectation yield
		\begin{equation}\nonumber
			\begin{split}
				&\quad \mathbb{E}\bigg(\sum_{k,k'\in \mathbb{Z}_0^2}  \|\theta_{k}\sigma_k\|_{H^{2-\gamma}}  \|\theta_{k'} \sigma_{k'}\|_{H^{2-\gamma}} \, \Big|c_{k,k'}(h)- \mathbb{E}\big[c_{k,k'}(h)|\mathcal{F}_{h\delta}\big]\Big|\bigg)^2 \\
				&\leq \Big(\sum_{k,k'\in \mathbb{Z}_0^2} \|\theta_{k}\sigma_k\|_{H^{2-\gamma}}  \|\theta_{k'} \sigma_{k'}\|_{H^{2-\gamma}} \Big) \\
				&\quad \times \bigg(\sum_{k,k'\in \mathbb{Z}_0^2}\|\theta_{k}\sigma_k\|_{H^{2-\gamma}}  \|\theta_{k'} \sigma_{k'}\|_{H^{2-\gamma}} \mathbb{E}\Big|c_{k,k'}(h)- \mathbb{E}\big[c_{k,k'}(h)|\mathcal{F}_{h\delta}\big]\Big| ^2\bigg)\\
				&\leq \Big(\sum_{k \in \mathbb{Z}_0^2} \|\theta_{k}\sigma_k\|_{H^{2-\gamma}}  \Big)^2 \bigg(\sum_{k,k'\in \mathbb{Z}_0^2}\|\theta_{k}\sigma_k\|_{H^{2-\gamma}}  \|\theta_{k'} \sigma_{k'}\|_{H^{2-\gamma}} \, \mathbb{E}\Big[c_{k,k'}(h)^2\Big] \bigg).
			\end{split}
		\end{equation}
		Notice that the following formula holds:
		\begin{equation}\nonumber
			\begin{split}
				\mathbb{E}\Big[c_{k,k'}(h)^2\Big]&=\mathbb{E}\bigg[\bigg( \int_{h\delta}^{(h+1)\delta}\int_{h\delta}^{s} \eta^{\alpha,k}(s) \, \eta^{\alpha,{k'}}(r) \, drds\bigg)^2 \, \bigg]\\
				&=\mathbb{E}\bigg[\bigg(\int_{h\delta}^{(h+1)\delta}\eta^{\alpha,k}(s) \Big(W_s^{k'}-W_{h\delta}^{k'}- \frac1\alpha \big(\eta^{\alpha,k'}(s)-\eta^{\alpha,k'}(h\delta)\big)\Big)ds\bigg)^2\, \bigg]\\
				&\leq \mathbb{E}\bigg[\delta  \int_{h\delta}^{(h+1)\delta}\big|\eta^{\alpha,k}(s)\big|^2 \Big(W_s^{k'}-W_{h\delta}^{k'}- \frac1\alpha \big(\eta^{\alpha,k'}(s)-\eta^{\alpha,k'}(h\delta)\big)\Big)^2 ds \bigg].
			\end{split}
		\end{equation}
		Again by Cauchy's inequality, we have
		\begin{equation}\nonumber
			\begin{split}
				\mathbb{E}\Big[c_{k,k'}(h)^2\Big]&\leq \delta \int_{h\delta}^{(h+1)\delta} \Big(\mathbb{E}\big[ |\eta^{\alpha,k}(s)|^4\big]\Big)^{\frac{1}{2}} \bigg(\mathbb{E} \Big[ \Big(W_s^{k'}-W_{h\delta}^{k'}- \frac1\alpha \big(\eta^{\alpha,k'}(s)-\eta^{\alpha,k'}(h\delta)\big)\Big)^4\Big]\bigg)^{\frac{1}{2}} ds\\
				&\lesssim \delta \int_{h\delta}^{(h+1)\delta} \alpha \bigg(\mathbb{E} \Big[\big|W_{(h+1)\delta}^{k'}-W_{h\delta}^{k'}\big|^4\Big] + \frac1{\alpha^{4}} \mathbb{E} \Big[\big| \eta^{\alpha,k'}(s)\big|^4 \Big]\bigg)^{\frac{1}{2}} ds\\
				&\lesssim \delta^3 \alpha +\delta^2 .
			\end{split}
		\end{equation}
		Summarizing the above estimates yields
		$$\aligned
		&\quad \ \mathbb{E} \bigg(\sum_{k,k'\in \mathbb{Z}_0^2} \big\|\theta_{k}\sigma_{k} \cdot \nabla( \theta_{k'}\sigma_{k'} \cdot \nabla \xi_{h\delta})\big\|_{H^{-\beta}} \Big|c_{k,k'}(h)- \mathbb{E}\big[c_{k,k'}(h)|\mathcal{F}_{h\delta}\big] \Big| \bigg)^2 \\
		&\lesssim \|\xi_0 \|_{L^2}^2 \Big(\sum_{k \in \mathbb{Z}_0^2} \|\theta_{k}\sigma_k\|_{H^{2-\gamma}} \Big)^2 \bigg(\sum_{k,k'\in \mathbb{Z}_0^2}\|\theta_{k}\sigma_k\|_{H^{2-\gamma}}  \|\theta_{k'} \sigma_{k'}\|_{H^{2-\gamma}} \, (\delta^3 \alpha +\delta^2 ) \bigg) \\
		&= \|\xi_0 \|_{L^2}^2 \Big(\sum_{k \in \mathbb{Z}_0^2} \|\theta_{k}\sigma_k\|_{H^{2-\gamma}} \Big)^4 (\delta^3 \alpha +\delta^2 ) .
		\endaligned $$
		Substituting this estimate into \eqref{supM_n}, we deduce
		\begin{equation}\nonumber
			\begin{split}
				\mathbb{E} \bigg[\sup_{1<n \leq T/\delta}\big\|M_n\big\|_{H^{-\beta}}\bigg]  &\leq \mathbb{E} \bigg[\sup_{1 <n \leq T/\delta}\big\|M_n\big\|_{H^{-\beta}}^2\bigg]^{\frac{1}{2}}\\
				&\lesssim  T^{\frac{1}{2}} \|\xi_{0}\|_{L^{2}} \Big(\sum_{k\in \mathbb{Z}_0^2} \|\theta_{k}\sigma_k\|_{H^{2-\gamma}}\Big)^2 \big(\delta \alpha^{\frac12} + \delta^{\frac12} \big)\\
				&\lesssim \delta \alpha^{\frac{1}{2}} T^{\frac{1}{2}}\|\xi_{0}\|_{L^{2}} \Big(\sum_{k\in \mathbb{Z}_0^2} |\theta_{k}| \, |k|^{2-\gamma}\Big)^2\\
				&= \delta \alpha^{\frac{1}{2}} T^{\frac{1}{2}} D_{\theta,\gamma} \, \|\xi_{0}\|_{L^{2}} \,  .
			\end{split}
		\end{equation}
		
		Now let us turn to the term $R_n$, notice that \eqref{Mn} yields
		\begin{equation}\nonumber
			\begin{split}
				\big\|R_n\big\|_{H^{-\beta}}&\leq \sum_{h=1}^{n-1} \sum_{k,k'\in \mathbb{Z}_0^2} \big\| \theta_{k}\sigma_{k} \cdot \nabla( \theta_{k'}\sigma_{k'} \cdot \nabla \xi_{h\delta})\big\|_{H^{-\beta}} \, \Big| \mathbb{E}\big[c_{k,k'}(h)|\mathcal{F}_{h\delta}\big]- \delta_{k,k'} \frac{\delta}{2} \Big|\\
				&\lesssim   \|\xi_{0}\|_{L^{2}}\sum_{h=1}^{n-1} \sum_{k,k'\in \mathbb{Z}_0^2}  \|\theta_{k}\sigma_k\|_{H^{2-\gamma}}  \|\theta_{k'} \sigma_{k'}\|_{H^{2-\gamma}} \, \Big| \mathbb{E}\big[c_{k,k'}(h)|\mathcal{F}_{h\delta}\big]- \delta_{k,k'} \frac{\delta}{2} \Big|.
			\end{split}
		\end{equation}
		We use the same method as the term $M_n$ to further deal with the above formula as follows:
		\begin{equation}\nonumber
			\begin{split}
				&\quad \ \sum_{k,k'\in \mathbb{Z}_0^2}  \|\theta_{k}\sigma_k\|_{H^{2-\gamma}}  \|\theta_{k'} \sigma_{k'}\|_{H^{2-\gamma}} \, \Big| \mathbb{E}\big[c_{k,k'}(h)|\mathcal{F}_{h\delta}\big]- \delta_{k,k'} \frac{\delta}{2} \Big|\\
				&=\sum_{k,k'\in \mathbb{Z}_0^2}  \|\theta_{k}\sigma_k\|^{1/2}_{H^{2-\gamma}}  \|\theta_{k'} \sigma_{k'}\|^{1/2}_{H^{2-\gamma}}  \|\theta_{k}\sigma_k\|^{1/2}_{H^{2-\gamma}}  \|\theta_{k'} \sigma_{k'}\|^{1/2}_{H^{2-\gamma}} \, \Big| \mathbb{E}\big[c_{k,k'}(h)|\mathcal{F}_{h\delta}\big]- \delta_{k,k'} \frac{\delta}{2} \Big|\\
				&\leq \Big(\sum_{k\in \mathbb{Z}_0^2}  \|\theta_{k}\sigma_k\|_{H^{2-\gamma}} \Big) \bigg(\sum_{k,k'\in \mathbb{Z}_0^2} \|\theta_{k}\sigma_k\|_{H^{2-\gamma}}  \|\theta_{k'} \sigma_{k'}\|_{H^{2-\gamma}} \,  \Big| \mathbb{E}\big[c_{k,k'}(h)|\mathcal{F}_{h\delta}\big]- \delta_{k,k'} \frac{\delta}{2} \Big|^2\, \bigg)^{\frac{1}{2}}.
			\end{split}
		\end{equation}
		By \eqref{conditional expectation of ckk}, we can easily get
		\begin{equation}\nonumber
			\begin{split}
				&\quad \ \mathbb{E} \bigg[ \Big| \mathbb{E}\big[c_{k,k'}(h)|\mathcal{F}_{h\delta}\big]-\delta_{k,k'} \frac{\delta}{2} \Big|^2  \bigg] \\
				&= \mathbb{E} \bigg[ \Big| \eta^{\alpha,k}(h\delta) \eta^{\alpha,{k'}}(h\delta) \frac{(1-e^{-\alpha\delta})^2}{2\alpha^{2}}+ \delta_{k,k'} \frac1\alpha \big(e^{-\alpha\delta}-1 +\frac{1}{4}(1-e^{-2\alpha\delta})\big) \Big|^2  \bigg]\\
				&\lesssim \alpha^{-2}.
			\end{split}
		\end{equation}
		Combining the above results, we take supremum and then expectation on $\big\|R_n\big\|_{H^{-\beta}}$ to get
		\begin{equation}\nonumber
			\begin{split}
				\mathbb{E} \bigg[\sup_{1 <n \leq T/\delta}\big\|R_n\big\|_{H^{-\beta}}\bigg]
				& \lesssim \alpha^{-1} \|\xi_{0}\|_{L^{2}} \sum_{h=1}^{T/\delta-1} \Big(\sum_{k\in \mathbb{Z}_0^2}  \|\theta_{k}\sigma_k\|_{H^{2-\gamma}} \Big)^2 \\
				&\lesssim  \delta^{-1} \alpha^{-1} \, T D_{\theta,\gamma} \, \|\xi_{0}\|_{L^{2}}.
			\end{split}
		\end{equation}
		
		Taking our assumptions on the parameters into consideration, and noticing that
		$$\sum_{h=m}^{n}I_{231}(h)=4\nu \big( M_{n+1}+R_{n+1}-M_m-R_m \big) ,$$
		we complete the proof of Lemma \ref{nakao}.
	\end{proof}

	\subsection{The term $I_{25}(h)$}\label{subsec-25}

	In this section, we focus on the term
	$$	I_{25}(h)=-\int_{h\delta}^{(h+1)\delta}\bm{b}(s) \cdot \nabla \xi_{h\delta} \, ds =-\Big( \int_{h\delta}^{(h+1)\delta}\bm{b}(s)  \, ds\Big) \cdot \nabla \xi_{h\delta} .$$
	
	According to the definition of $\bm{b}(s)$, it holds
	$$\int_{h\delta}^{(h+1)\delta}\bm{b}(s)\, ds=2\sqrt{\nu}\sum_{k\in \mathbb{Z}_0^2}\theta_k\sigma_k\big(W_{(h+1)\delta}^k-W_{h\delta}^k\big)-2\sqrt{\nu}\alpha^{-1} \sum_{k\in \mathbb{Z}_0^2}\theta_k\sigma_k \big(\eta^{\alpha,k}((h+1)\delta)-\eta^{\alpha,k}(h\delta)\big),$$
	then we can further decompose $I_{25}(h)$ as follows:
	\begin{equation}\nonumber
		\begin{split}
			I_{25}(h)&=-2\sqrt{\nu} \sum_{k\in \mathbb{Z}_0^2} \int_{h\delta}^{(h+1)\delta} \theta_{k} \sigma_{k} \cdot \nabla \xi_{h\delta}  \, dW^k_s\\
			&\quad+2\sqrt{\nu} \alpha^{-1}\sum_{k\in \mathbb{Z}_0^2}\big(\theta_k\sigma_k\cdot \nabla \xi_{h\delta}\big) \big(\eta^{\alpha,k}((h+1)\delta)-\eta^{\alpha,k}(h\delta)\big)\\
			&=:I_{251}(h)+I_{252}(h).
		\end{split}
	\end{equation}
	
	The following lemma gives the result for the term $I_{252}(h)$, as for the term $I_{251}(h)$, we will separately discuss it after the proof of Lemma \ref{I25}.
	
	\begin{lemma}\label{I25}
		Let $\gamma \in (0,\frac{1}{2})$, $\beta>3$ and $T\geq 1$, then we have
		\begin{equation}\nonumber
			\begin{split}
				&\quad	\mathbb{E}\bigg[\sup_{1\leq m<n \leq T/\delta-1}\Big\| \sum_{h=m}^{n}I_{252}(h) \Big\|_{H^{-\beta}}\bigg]\\
				&\lesssim \nu^{\frac{2+3\gamma}{2(1+\gamma)}} \big( \delta^{-\frac{\gamma}{2(1+\gamma)}}  \alpha^{-\frac{\gamma}{2(1+\gamma)}} +  \delta^{-\frac{\gamma}{1+\gamma}} \alpha^{-\frac{\gamma}{1+\gamma}}  \big)\, T C^{\frac{2+3\gamma}{4(1+\gamma)}}_{\theta,2-\gamma,4} \,  \|\xi_0\|_{L^{2}} \big(1+\|\xi_0\|_{L^{2}}\big)^2 \\
				&\quad+\nu^{\frac{1}{2}} \alpha^{-\frac{1}{2}}  \, C^{1/2}_{\theta,2-\gamma,2} \log^{\frac{1}{2}}(1+\alpha T)  \, \|\xi_0\|_{L^{2}}.
			\end{split}
		\end{equation}
	\end{lemma}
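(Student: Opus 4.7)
The plan is to perform Abel (summation by parts) on $\sum_{h=m}^n I_{252}(h)$, using the identity $\bm b(t)=2\sqrt\nu\sum_k\theta_k\sigma_k\eta^{\alpha,k}(t)$ to recognize the OU increments as increments of $\bm b$ itself. This yields the clean decomposition
\begin{equation*}
\sum_{h=m}^n I_{252}(h)=\alpha^{-1}\big[\bm b((n+1)\delta)\cdot\nabla\xi_{n\delta}-\bm b(m\delta)\cdot\nabla\xi_{m\delta}\big]-\alpha^{-1}\sum_{h=m+1}^{n}\bm b(h\delta)\cdot\nabla(\xi_{h\delta}-\xi_{(h-1)\delta}),
\end{equation*}
so the problem splits into two boundary terms and a "discrete integration-by-parts residue."

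For the boundary terms, integration by parts against $\phi\in H^\beta$ combined with the divergence-free property of $\bm b$ and the Sobolev embedding $H^{1+\gamma}\hookrightarrow L^\infty$ gives $\|\bm b(t)\cdot\nabla\xi_s\|_{H^{-\beta}}\lesssim\|\xi_0\|_{L^2}\|\bm b(t)\|_{H^{2-\gamma}}$. Taking the supremum in $t$ and applying Lemma \ref{Eb sup lemma} with $(p,\tau)=(2,2-\gamma)$ produces exactly the logarithmic contribution $\nu^{1/2}\alpha^{-1/2}C^{1/2}_{\theta,2-\gamma,2}\log^{1/2}(1+\alpha T)\|\xi_0\|_{L^2}$ after multiplying by $\alpha^{-1}$.

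For the residue sum I would drop the supremum over $(m,n)$ in favour of the unsigned total $\alpha^{-1}\sum_{h=1}^{T/\delta-1}\mathbb E\|\bm b(h\delta)\cdot\nabla(\xi_{h\delta}-\xi_{(h-1)\delta})\|_{H^{-\beta}}$. Pairing with $\phi\in H^\beta$, using $\bm b\cdot\nabla\phi=\nabla\cdot(\bm b\phi)$ (zero-mean) together with Poincar\'e and the product estimate already displayed in \eqref{decompose H^g}, one gets, for $\gamma\in(0,1/2)$ and $\beta>3$,
\begin{equation*}
\|\bm b(h\delta)\cdot\nabla(\xi_{h\delta}-\xi_{(h-1)\delta})\|_{H^{-\beta}}\lesssim\|\bm b(h\delta)\|_{H^{2-\gamma}}\,\|\xi_{h\delta}-\xi_{(h-1)\delta}\|_{H^{-1-\gamma}}.
\end{equation*}
The crucial step is to interpolate $\|\cdot\|_{H^{-1-\gamma}}\le\|\cdot\|_{H^{-1}}^{1/(1+\gamma)}\|\cdot\|_{H^{-2-\gamma}}^{\gamma/(1+\gamma)}$ (Lemma \ref{interpolation}, the exponents being forced by $-1-\gamma=\frac{1}{1+\gamma}(-1)+\frac{\gamma}{1+\gamma}(-2-\gamma)$), and then to apply H\"older with the three conjugate exponents $(2,\,2(1+\gamma),\,2(1+\gamma)/\gamma)$. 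The three factors are controlled respectively by Lemma \ref{lemma Eb} at $p=2$, Lemma \ref{E H^-1} at $p=2$, and Lemma \ref{E H2-g} at $p=2$; the combined $\nu$-power collapses to $\tfrac12+\tfrac1{2(1+\gamma)}+\tfrac{\gamma}{1+\gamma}=\tfrac{2+3\gamma}{2(1+\gamma)}$, while the $\delta$- and $\alpha$-powers assemble into $\delta^{1/(1+\gamma)}\alpha^{(2+\gamma)/(2(1+\gamma))}(\delta+\alpha^{-1})^{\gamma/(2(1+\gamma))}$.

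Summing the $T/\delta$ uniform estimates and multiplying by the outside $\alpha^{-1}$ shifts exponents to $T\delta^{-\gamma/(1+\gamma)}\alpha^{-\gamma/(2(1+\gamma))}(\delta+\alpha^{-1})^{\gamma/(2(1+\gamma))}$, and splitting $(\delta+\alpha^{-1})^{\gamma/(2(1+\gamma))}$ produces the two advertised summands $\delta^{-\gamma/(2(1+\gamma))}\alpha^{-\gamma/(2(1+\gamma))}$ and $\delta^{-\gamma/(1+\gamma)}\alpha^{-\gamma/(1+\gamma)}$. The $C$-constants $C^{1/2}_{\theta,2-\gamma,2}C^{1/(2(1+\gamma))}_{\theta,1+\gamma,2}C^{\gamma/(2(1+\gamma))}_{\theta,1+\gamma,4}$ produced by the three moment estimates are then bounded by the single constant $C_{\theta,2-\gamma,4}^{(2+3\gamma)/(4(1+\gamma))}$ using Remark \ref{Jensen C} and monotonicity in $\tau$, and the $\|\xi_0\|_{L^2}$ polynomials (each factor is $\le\|\xi_0\|_{L^2}\cdot(1+\|\xi_0\|_{L^2}^2)^{(1+2\gamma)/(2(1+\gamma))}$) are absorbed into $\|\xi_0\|_{L^2}(1+\|\xi_0\|_{L^2})^2$. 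The only genuinely delicate point is locating the interpolation index $-1-\gamma$ (rather than the more obvious $-\gamma$) so that the three available moment estimates align simultaneously; once this is chosen, everything else is bookkeeping on exponents.
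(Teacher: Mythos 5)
Your proposal is correct and follows essentially the same route as the paper: Abel summation to isolate the two boundary terms (controlled via Lemma \ref{Eb sup lemma}, producing the logarithmic contribution) and the residue sum, which is bounded by $\|\bm{b}(h\delta)\|_{H^{2-\gamma}}\|\xi_{h\delta}-\xi_{(h-1)\delta}\|_{H^{-1-\gamma}}$ and then handled by interpolating $H^{-1-\gamma}$ between $H^{-1}$ and $H^{-2-\gamma}$ so that Lemmas \ref{E H^-1} and \ref{E H2-g} apply. The only cosmetic difference is that you use a single three-exponent H\"older inequality where the paper iterates Cauchy--Schwarz and Jensen; the resulting exponents in $\nu$, $\delta$, $\alpha$ and the constant $C_{\theta,2-\gamma,4}^{(2+3\gamma)/(4(1+\gamma))}$ coincide.
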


	\begin{proof}
		We first reformulate the sum as follows:
		\begin{equation}\nonumber
			\begin{split}
				\sum_{h=m}^{n}I_{252}(h)&= \alpha^{-1} \sum_{h=m}^{n} \big(\bm{b}((h+1)\delta) -\bm{b}(h \delta) \big)\cdot \nabla\xi_{h\delta} \\
				&=-\alpha^{-1} \bigg[\sum_{h=m+1}^{n}  \bm{b}(h\delta) \cdot \nabla (\xi_{h\delta}-\xi_{(h-1)\delta} )+  \bm{b}(m\delta)\cdot\nabla \xi_{m\delta} - \bm{b} ((n+1)\delta )\cdot \nabla \xi_{n\delta} \bigg].
			\end{split}
		\end{equation}
		We will estimate each term of the above formula respectively. For the first term, notice that for any test function $\phi \in H^{\beta}(\T^2)$, it holds
		$$\big| \big\langle\bm{b}(h\delta) \cdot \nabla(\xi_{h\delta}-\xi_{(h-1)\delta}),\phi \big\rangle \big|= \big| \big\langle \bm{b}(h\delta) \cdot \nabla \phi ,\xi_{h\delta}-\xi_{(h-1)\delta} \big\rangle \big| ;$$
		meanwhile,	\eqref{decompose H^g} yields
		\begin{equation*}
			\begin{split}
				\big| \big\langle \bm{b}(h\delta) \cdot \nabla \phi ,\xi_{h\delta}-\xi_{(h-1)\delta} \big\rangle \big|
				&\lesssim  \|\nabla(\bm{b}(h\delta) \cdot \nabla \phi )\|_{H^{\gamma}} \|\xi_{h\delta}-\xi_{(h-1)\delta}\|_{H^{-1-\gamma}}\\
				&\lesssim \|\bm{b}(h\delta)\|_{H^{2-\gamma}} \|\phi\|_{H^{2+2\gamma}} \|\xi_{h\delta}-\xi_{(h-1)\delta}\|_{H^{-1-\gamma}}.
			\end{split}
		\end{equation*}
		Hence we can further get for $\beta>3$ and $\gamma \in (0,\frac{1}{2})$,
		$$\big\|\bm{b}(h\delta) \cdot \nabla (\xi_{h\delta}-\xi_{(h-1)\delta} ) \big\|_{H^{-\beta}} \lesssim \|\bm{b}(h\delta)\|_{H^{2-\gamma}}  \|\xi_{h\delta}-\xi_{(h-1)\delta}\|_{H^{-1-\gamma}}. $$
		As for the second term, we can use Sobolev embedding theorem to get for  $\phi \in H^{\beta}(\T^2)$,
		\begin{equation*}
			\begin{split}
				\big|\langle \bm{b}(m\delta) \cdot \nabla  \xi_{m\delta}, \phi \rangle\big|=\big|\langle \bm{b}(m\delta) \cdot \nabla \phi, \xi_{m\delta} \rangle\big| &\lesssim \|\bm{b}(m\delta)\|_{L^{2}} \|\nabla\phi\|_{L^{\infty}}  \|\xi_{m\delta}\|_{L^{2}} \\
				&\lesssim  \|\bm{b}(m\delta)\|_{H^{2-\gamma}} \|\phi\|_{H^{2+\gamma}}  \|\xi_{0}\|_{L^{2}} .
			\end{split}
		\end{equation*}
		Hence $\big\|\bm{b}(m\delta) \cdot \nabla  \xi_{m\delta} \big\|_{H^{-\beta}} \lesssim \|\bm{b}(m\delta)\|_{H^{2-\gamma}}  \|\xi_{0}\|_{L^{2}}$ for any $\beta>3$ and $\gamma \in (0,1)$.
		
		Besides, the third term can be estimated in the same way as the second one and therefore we can get the similar result. Summarizing the above estimates, we obtain
		\begin{equation}\nonumber
			\begin{split}
				\Big\| \sum_{h=m}^{n}I_{252}(h) \Big\|_{H^{-\beta}} &\lesssim \alpha^{-1} \bigg( \sum_{h=m+1}^{n} \|\bm{b}(h\delta)\|_{H^{2-\gamma}}  \|\xi_{h\delta}-\xi_{(h-1)\delta}\|_{H^{-1-\gamma}} \\
				&\quad+ \|\bm{b}(m\delta)\|_{H^{2-\gamma}} \|\xi_0\|_{L^{2}} + \|\bm{b}((n+1)\delta)\|_{H^{2-\gamma}} \|\xi_0\|_{L^{2}}   \bigg).
			\end{split}
		\end{equation}
		Then we take supremum and then expectation to further get
		\begin{equation}\label{sup I_252}
			\begin{split}
				&\quad \ \mathbb{E}\bigg[\sup_{1\leq m<n \leq T/\delta-1}\Big\| \sum_{h=m}^{n}I_{252}(h) \Big\|_{H^{-\beta}}\bigg]\\
				&\lesssim \alpha^{-1}  \sum_{h=1}^{T/\delta-1} \mathbb{E} \Big[ \|\bm{b}(h\delta)\|_{H^{2-\gamma}}  \|\xi_{h\delta}-\xi_{(h-1)\delta}\|_{H^{-1-\gamma}}\Big]+ \alpha^{-1}  \|\xi_0\|_{L^{2}} \,  \mathbb{E}\Big[\sup_{1 \leq m<T/\delta-1} \|\bm{b}(m\delta)\|_{H^{2-\gamma}} \Big].
			\end{split}
		\end{equation}
		By the Cauchy-Schwarz inequality, we obtain
		\begin{equation}\nonumber
			\mathbb{E} \Big[ \|\bm{b}(h\delta)\|_{H^{2-\gamma}}  \|\xi_{h\delta}-\xi_{(h-1)\delta}\|_{H^{-1-\gamma}}\Big]\leq \Big(\mathbb{E} \big[\|\bm{b}(h\delta)\|^2_{H^{2-\gamma}} \big]\Big)^{\frac{1}{2}} \Big(\mathbb{E}\big[ \|\xi_{h\delta}-\xi_{(h-1)\delta}\|^2_{H^{-1-\gamma}}\big]\Big)^{\frac{1}{2}}.
		\end{equation}
		Considering the latter expectation, for $\gamma \in (0,1)$, Lemma \ref{interpolation} yields
		\begin{equation}\nonumber
			\begin{split}
				\mathbb{E}\Big[ \|\xi_{h\delta}-\xi_{(h-1)\delta}\|^2_{H^{-1-\gamma}} \Big]
				&\leq \mathbb{E}\Big[\|\xi_{h\delta}-\xi_{(h-1)\delta}\|^{\frac{2}{1+\gamma}}_{H^{-1}} \, \|\xi_{h\delta}-\xi_{(h-1)\delta}\|^{\frac{2\gamma}{1+\gamma}}_{H^{-2-\gamma}} \Big] \\
				&\leq  \bigg(\mathbb{E}\Big[\|\xi_{h\delta}-\xi_{(h-1)\delta}\|^{\frac{4}{1+\gamma}}_{H^{-1}}  \Big] \bigg)^{\frac{1}{2}} \bigg(\mathbb{E}\Big[\|\xi_{h\delta}-\xi_{(h-1)\delta}\|^{\frac{4\gamma}{1+\gamma}}_{H^{-2-\gamma}}  \Big] \bigg)^{\frac{1}{2}};
			\end{split}
		\end{equation}
		moreover, to apply Lemma \ref{E H2-g}, we need to further estimate the second expectation of the last line as follows:
		$$\mathbb{E}\Big[\|\xi_{h\delta}-\xi_{(h-1)\delta}\|^{\frac{4\gamma}{1+\gamma}}_{H^{-2-\gamma}}  \Big] \leq \mathbb{E}\Big[\|\xi_{h\delta}-\xi_{(h-1)\delta}\|^{2}_{H^{-2-\gamma}}  \Big]^{\frac{2\gamma}{1+\gamma}}.$$
		Combining the above results together, Lemma \ref{E H^-1}, Lemma \ref{E H2-g}  and Remark \ref{Jensen C} yield
		\begin{equation}\label{I252 Eb sum}
			\begin{split}
				&\quad \ \mathbb{E} \Big[\|\bm{b}(h\delta)\|_{H^{2-\gamma}}  \|\xi_{h\delta}-\xi_{(h-1)\delta}\|_{H^{-1-\gamma}}\Big] \\
				&\lesssim \nu^{\frac{2+3\gamma}{2(1+\gamma)}} \Big( \delta^{\frac{2+\gamma}{2(1+\gamma)}}  \alpha^{\frac{2+\gamma}{2(1+\gamma)}} +\delta^{\frac{1}{1+\gamma}} \alpha^{\frac{1}{1+\gamma}}  \Big)\, C^{\frac{2+3\gamma}{4(1+\gamma)}}_{\theta,2-\gamma,4} \, \|\xi_0\|_{L^{2}} \big(1+\|\xi_0\|_{L^2}\big)^2.
			\end{split}
		\end{equation}
		In addition, according to Lemma \ref{Eb sup lemma}, we have
		\begin{equation}\label{I252 sup Eb}
			\mathbb{E}\Big[\sup_{1 \leq m<T/\delta-1} \|\bm{b}(m\delta)\|_{H^{2-\gamma}} \Big] \leq \mathbb{E}\Big[\sup_{1 \leq m<T/\delta-1} \|\bm{b}(m\delta)\|^2_{H^{2-\gamma}} \Big]^{\frac{1}{2}} \lesssim \nu^{\frac{1}{2}} \alpha^{\frac{1}{2}} \, C^{1/2}_{\theta,2-\gamma,2}  \log^{\frac{1}{2}}(1+\alpha T)  .
		\end{equation}
		Inserting \eqref{I252 Eb sum} and \eqref{I252 sup Eb} into \eqref{sup I_252}, we complete the proof.
	\end{proof}
	
	Now let us deal with the term
	\begin{equation}\nonumber
		\sum_{h=m}^{n-1} I_{251}(h)=-2\sqrt{\nu} \sum_{k \in \mathbb{Z}_0^2} \int_{m\delta}^{n\delta}  \theta_{k} \sigma_{k} \cdot \nabla \xi_{[s]} \, dW^k_s,
	\end{equation}
	where $[s]$ is defined as at the beginning of Section 5.1. By the Burkholder-Davis-Gundy's inequality, we have
	\begin{equation}\label{I252 BDG}
		\mathbb{E}\bigg[\Big\|2\sqrt{\nu}\sum_{k \in \mathbb{Z}_0^2}\int_{t_1}^{t_2}  \theta_{k} \sigma_{k} \cdot \nabla \xi_{[s]}   \, dW^k_s\Big\|_{H^{-\beta}}^4\bigg] \lesssim \nu^2 \, \mathbb{E} \bigg[\Big(\sum_{k \in \mathbb{Z}_0^2} \int_{t_1}^{t_2} \big\| \theta_{k}\sigma_{k} \cdot \nabla \xi_{[s]} \big\|_{H^{-\beta}}^2 \, ds \Big)^2\bigg].
	\end{equation}
	Let $e_k(x)=e^{2\pi i k \cdot x}$ and recall the definition of $\sigma_k$, then we have
	$$\sum_{k \in \mathbb{Z}_0^2} \big\| \theta_{k}\sigma_{k} \cdot \nabla \xi_{[s]} \big\|_{H^{-\beta}}^2    \leq \|\theta\|^2_{\ell^{\infty}} \sum_{k \in \mathbb{Z}_0^2} \big\|\sigma_k \, \xi_{[s]}\big\|_{H^{-\beta+1}}^2 \leq \|\theta\|^2_{\ell^{\infty}} \sum_{k \in \mathbb{Z}_0^2} \big\|e_k \, \xi_{[s]}\big\|_{H^{-\beta+1}}^2 ;$$
	furthermore, for $\beta>3$, it holds
	$$\sum_{k \in \mathbb{Z}_0^2} \big\|e_k \, \xi_{[s]}\big\|_{H^{-\beta+1}}^2 \lesssim \sum_{k \in \mathbb{Z}_0^2} \sum_{l \in \mathbb{Z}_0^2} \frac{1}{|l|^{2(\beta-1)}} \big|\big\langle \xi_{[s]}, e_{l-k} \big\rangle\big|^2=\big\|\xi_{[s]}\big\|^2_{L^2} \sum_{l \in \mathbb{Z}_0^2} \frac{1}{|l|^{2(\beta-1)}}.$$
	Combing the above two estimates, we obtain
	$$\sum_{k \in \mathbb{Z}_0^2} \big\| \theta_{k}\sigma_{k} \cdot \nabla \xi_{[s]} \big\|_{H^{-\beta}}^2  \lesssim \|\theta\|^2_{\ell^{\infty}} \|\xi_0\|^2_{L^2}  \sum_{l \in \mathbb{Z}_0^2} \frac{1}{|l|^{2(\beta-1)}} \lesssim_{\beta} \|\theta\|^2_{\ell^{\infty}} \|\xi_0\|^2_{L^2},$$
	where we have used $\sum_{l \in \mathbb{Z}_0^2} |l|^{-2(\beta-1)} < \infty$. Hence \eqref{I252 BDG} yields
	$$\mathbb{E}\bigg[\Big\|2\sqrt{\nu}\sum_{k \in \mathbb{Z}_0^2}\int_{t_1}^{t_2}  \theta_{k} \sigma_{k} \cdot \nabla \xi_{[s]}   \, dW^k_s\Big\|_{H^{-\beta}}^4\bigg] \lesssim \nu^2 \|\theta\|^4_{\ell^{\infty}} \|\xi_0\|^4_{L^2} \, |t_2-t_1|^2.$$
	By the Kolmogorov Continuity Theorem, for every $\rho\in(0,\frac{1}{4})$, we arrive at
	\begin{equation}\label{I252 final}
		\mathbb{E}\Bigg[\sup_{0<t_1<t_2<T} \frac{\Big\| 2\sqrt{\nu}\sum_{k\in \mathbb{Z}_0^2} \int_{t_1}^{t_2} \, \theta_{k} \sigma_{k} \cdot \nabla \xi_{[s]} \, dW^k_s\Big\|_{H^{-\beta}}}{{|t_2-t_1|}^{\rho}}\Bigg] \lesssim \nu^{\frac{1}{2}} T^{\frac{1}{2}-\rho} \|\theta\|_{\ell^{\infty}} \|\xi_0\|_{L^2}  .
	\end{equation}
	
	\subsection{The term $I_1(h)+\delta  (u_{h\delta} \cdot \nabla  \xi_{h\delta})  $} \label{subsec-1-drift}

	For the remaining two terms of \eqref{decompose 2}, we will treat them together and prove
	\begin{lemma}\label{remaing term}
		For $\gamma \in (0,1)$ and $\beta>3$, the following estimate holds for all $T\geq 1$:
		$$\mathbb{E}\bigg[\sup_{1\leq m<n \leq T/\delta-1}	\Big\| \sum_{h=m}^{n}\big(I_1(h)+\delta  (u_{h\delta} \cdot \nabla  \xi_{h\delta})  \big) \Big\|_{H^{-\beta}}\bigg] \lesssim \delta \nu^{\frac{1}{2}}  \alpha^{\frac{1}{2}} \, T C^{1/2}_{\theta,1+\gamma,2} \,   \|\xi_{0}\|^2_{L^{2}} \big(1+\|\xi_{0}\|_{L^{2}}\big)  .$$
	\end{lemma}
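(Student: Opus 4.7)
\textbf{Proof plan for Lemma \ref{remaing term}.} The starting point is the elementary rewriting
\begin{equation*}
I_1(h)+\delta\,(u_{h\delta}\cdot\nabla\xi_{h\delta})
= -\int_{h\delta}^{(h+1)\delta}\bigl(u_s\cdot\nabla\xi_s - u_{h\delta}\cdot\nabla\xi_{h\delta}\bigr) ds
= -\int_{h\delta}^{(h+1)\delta}\Bigl[(u_s-u_{h\delta})\cdot\nabla\xi_s + u_{h\delta}\cdot\nabla(\xi_s-\xi_{h\delta})\Bigr] ds.
\end{equation*}
Since $\beta>3$, we test against $\phi\in H^{\beta}(\T^2)$ and estimate each of the two pieces using duality, aiming to express both in terms of $\|\xi_s-\xi_{h\delta}\|_{H^{-1}}$, to which Lemma \ref{E H^-1} (with $p=2$) and Jensen's inequality can then be applied.

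For the first piece, using that $u_s$ is divergence free and the Biot--Savart estimate $\|u_s-u_{h\delta}\|_{L^2}\lesssim \|\xi_s-\xi_{h\delta}\|_{H^{-1}}$, one gets
\begin{equation*}
\bigl|\langle (u_s-u_{h\delta})\cdot\nabla\xi_s,\phi\rangle\bigr|
=\bigl|\langle (u_s-u_{h\delta})\,\xi_s,\nabla\phi\rangle\bigr|
\le \|u_s-u_{h\delta}\|_{L^2}\|\xi_s\|_{L^2}\|\nabla\phi\|_{L^\infty}
\lesssim \|\xi_0\|_{L^2}\|\xi_s-\xi_{h\delta}\|_{H^{-1}}\|\phi\|_{H^{\beta}},
\end{equation*}
using $H^{\beta-1}\hookrightarrow L^\infty$ for $\beta>2$ and $\|\xi_s\|_{L^2}\le\|\xi_0\|_{L^2}$.

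The more delicate piece is the second one, since $u_{h\delta}$ is not known to be bounded in $L^\infty$. Integrating by parts and using $H^1$-duality,
\begin{equation*}
\bigl|\langle u_{h\delta}\cdot\nabla(\xi_s-\xi_{h\delta}),\phi\rangle\bigr|
=\bigl|\langle u_{h\delta}\cdot\nabla\phi,\,\xi_s-\xi_{h\delta}\rangle\bigr|
\le \|u_{h\delta}\cdot\nabla\phi\|_{H^1}\|\xi_s-\xi_{h\delta}\|_{H^{-1}}.
\end{equation*}
The function $u_{h\delta}\cdot\nabla\phi$ has zero spatial mean (because $\nabla\cdot u_{h\delta}=0$), so Poincar\'e's inequality plus the product splitting $\nabla(u_{h\delta}\cdot\nabla\phi)=\nabla u_{h\delta}\cdot\nabla\phi+u_{h\delta}\cdot\nabla^2\phi$ gives
\begin{equation*}
\|u_{h\delta}\cdot\nabla\phi\|_{H^1}
\lesssim \|\nabla u_{h\delta}\|_{L^2}\|\nabla\phi\|_{L^\infty}+\|u_{h\delta}\|_{L^4}\|\nabla^2\phi\|_{L^4}
\lesssim \|\xi_0\|_{L^2}\|\phi\|_{H^{\beta}},
\end{equation*}
where for the second product we use the 2D embedding $H^{1/2}\hookrightarrow L^4$ and $\|u_{h\delta}\|_{H^{1/2}}\lesssim\|\xi_{h\delta}\|_{H^{-1/2}}\le\|\xi_0\|_{L^2}$, together with $H^{\beta-5/2}\hookrightarrow L^\infty$ (valid for $\beta>3$) applied to $\nabla^2\phi\in H^{\beta-2}$. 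This is the main point where we must avoid an $L^\infty$ bound on $u_{h\delta}$; the trick is to spend one full derivative on $\phi$ (which has plenty to spare since $\beta>3$) so that $u_{h\delta}$ only needs to sit in $L^4$.

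Combining the two bounds, for every $h$ we obtain
\begin{equation*}
\bigl\|I_1(h)+\delta(u_{h\delta}\cdot\nabla\xi_{h\delta})\bigr\|_{H^{-\beta}}
\lesssim \|\xi_0\|_{L^2}\int_{h\delta}^{(h+1)\delta}\|\xi_s-\xi_{h\delta}\|_{H^{-1}} ds.
\end{equation*}
Bounding the supremum over $1\le m<n\le T/\delta-1$ by the total sum $\sum_{h=1}^{T/\delta-1}$, taking expectation, and invoking Jensen together with Lemma \ref{E H^-1} at $p=2$ yields
\begin{equation*}
\E\bigl[\|\xi_s-\xi_{h\delta}\|_{H^{-1}}\bigr]
\le \bigl(\E\|\xi_s-\xi_{h\delta}\|_{H^{-1}}^{2}\bigr)^{1/2}
\lesssim \delta\,\nu^{1/2}\alpha^{1/2}\,C^{1/2}_{\theta,1+\gamma,2}\,\|\xi_0\|_{L^2}\bigl(1+\|\xi_0\|_{L^2}\bigr),
\end{equation*}
where we used $(1+\|\xi_0\|^2_{L^2})^{1/2}\le 1+\|\xi_0\|_{L^2}$. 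Integrating in $s$ over an interval of length $\delta$ and summing $T/\delta$ such intervals produces the factor $T$, giving the claimed bound. No sophisticated step is needed beyond the $H^1$-estimate of $u_{h\delta}\cdot\nabla\phi$ described above; the remaining manipulations are routine and parallel the schemes already used in Section \ref{subs-5.2}.
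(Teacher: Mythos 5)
Your argument is correct and follows essentially the same route as the paper: the paper splits $u_{h\delta}\cdot\nabla\xi_{h\delta}-u_s\cdot\nabla\xi_s$ symmetrically into $(u_{h\delta}-u_s)\cdot\nabla\xi_{h\delta}+u_s\cdot\nabla(\xi_{h\delta}-\xi_s)$, tests against $\phi\in H^\beta$, reduces everything to $\|\xi_s-\xi_{h\delta}\|_{H^{-1}}$ via the divergence-free structure and the estimate \eqref{L2 decompose}, and then invokes Lemma \ref{E H^-1} with $p=2$ exactly as you do; your only deviations are the mirror-image choice of which factor carries the time increment and the use of $L^4$--$L^4$ H\"older in place of the $H^{1-\gamma}\times H^{\gamma}\to L^2$ product rule of Lemma \ref{HHH}, both of which are immaterial. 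One small slip: the embedding $H^{\beta-5/2}(\T^2)\hookrightarrow L^\infty$ requires $\beta>7/2$, not $\beta>3$, but you only need $\nabla^2\phi\in L^4$, which already follows from $\nabla^2\phi\in H^{\beta-2}\hookrightarrow H^{1/2}\hookrightarrow L^4$ for $\beta\ge 5/2$, so the final bound is unaffected.
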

	
	\begin{proof}
		For the convenience of calculation, we make the following decomposition:
		\begin{equation}\nonumber
			\begin{split}
				I_1(h)+\delta  (u_{h\delta} \cdot \nabla  \xi_{h\delta}) &=\int_{h\delta}^{(h+1)\delta} \big( u_{h\delta} \cdot \nabla \xi_{h\delta} -u_s \cdot \nabla \xi_s \big) \, ds\\
				&=\int_{h\delta}^{(h+1)\delta} (u_{h\delta}-u_s) \cdot \nabla \xi_{h\delta}  \, ds+\int_{h\delta}^{(h+1)\delta}  u_s \cdot \nabla(\xi_{h\delta} -\xi_s) \, ds  \\
				&=:I_{11}(h)+I_{12}(h).
			\end{split}
		\end{equation}
		
		We first consider the term $I_{11}(h)$. Notice that for every test function $\phi \in H^{\beta}(\T^2)$, it holds
		$\big|\big\langle (u_{h\delta}-u_s) \cdot \nabla \xi_{h\delta}, \phi \big\rangle\big|=\big|\big \langle u_{h\delta}-u_s, \xi_{h\delta} \, \nabla \phi \big \rangle \big| $; besides, by Sobolev embedding theorem, for $\gamma\in (0,1)$, we have
		$$\big|\big \langle u_{h\delta}-u_s, \xi_{h\delta} \, \nabla \phi \big \rangle \big|  \leq \|u_{h\delta}-u_s\|_{L^2} \|\xi_{h\delta}\|_{L^2} \|\nabla \phi\|_{L^{\infty}} \lesssim \|\xi_{h\delta}-\xi_s\|_{H^{-1}} \|\xi_0\|_{L^2} \| \phi\|_{H^{2+\gamma}}.$$
		Then for $\beta>3$, we can further get
		\begin{equation}\nonumber
			\big\|I_{11}(h)\big\|_{H^{-\beta}} \leq \int_{h\delta}^{(h+1)\delta} \big\| (u_{h\delta}-u_s) \cdot \nabla \xi_{h\delta} \big\|_{H^{-\beta}} \, ds\lesssim  \|\xi_{0}\|_{L^{2}}\int_{h\delta}^{(h+1)\delta} \|\xi_{h\delta}-\xi_s\|_{H^{-1}} \, ds.
		\end{equation}
		Taking supremum and then expectation, Lemma \ref{E H^-1} yields
		\begin{equation}\label{I11}
			\begin{split}
				\mathbb{E} \bigg[\sup_{1\leq m<n \leq T/\delta-1}\Big\|\sum_{h=m}^{n}I_{11}(h)\Big\|_{H^{-\beta}}\bigg] &\leq \sum_{h=1}^{T/\delta-1} \mathbb{E}\Big[\big\|I_{11}(h)\big\|_{H^{-\beta}}\Big]\\
				&\lesssim  \|\xi_{0}\|_{L^2} \sum_{h=1}^{T/\delta-1} \int_{h\delta}^{(h+1)\delta}\Big( \mathbb{E} \big[\|\xi_{h\delta}-\xi_s\|_{H^{-1}}^2 \big]\Big)^{\frac{1}{2}} \, ds\\
				&\lesssim \delta \nu^{\frac{1}{2}}  \alpha^{\frac{1}{2}} \, T C^{1/2}_{\theta,1+\gamma,2} \,   \|\xi_{0}\|^2_{L^{2}} \big(1+\|\xi_{0}\|_{L^{2}}\big)  .
			\end{split}
		\end{equation}
		
		As for the term $I_{12}(h)$, we can use  \eqref{L2 decompose} to estimate it as follows: for any $\phi\in H^{2+\gamma}$,
		\begin{equation}\nonumber
			\begin{split}
				\big|\big\langle u_s \cdot \nabla(\xi_{h\delta} -\xi_s), \phi \big\rangle\big|=\big|\big \langle u_s \cdot \nabla \phi, \xi_{h\delta}-\xi_s \big \rangle \big| &\lesssim \|\nabla(u_s \cdot \nabla \phi) \|_{L^{2}} \, \|\xi_{h\delta}-\xi_s\|_{H^{-1}} \\
				&\lesssim \|\xi_0\|_{L^2} \|\phi\|_{H^{2+\gamma}} \|\xi_{h\delta}-\xi_s\|_{H^{-1}} .
			\end{split}
		\end{equation}
		Hence for $\beta>3$, we have
		\begin{equation}\nonumber
			\big\|I_{12}(h)\big\|_{H^{-\beta}} \leq \int_{h\delta}^{(h+1)\delta} \big\| u_s \cdot \nabla(\xi_{h\delta} -\xi_s)\big\|_{H^{-\beta}} \, ds\lesssim  \|\xi_{0}\|_{L^{2}}\int_{h\delta}^{(h+1)\delta} \|\xi_{h\delta}-\xi_s\|_{H^{-1}} \, ds.
		\end{equation}
		Thus we can get the same estimate as the term $I_{11}(h)$,  that is
		\begin{equation}\label{I12}
			\mathbb{E}\bigg[\sup_{1\leq m<n \leq T/\delta-1}	\Big\| \sum_{h=m}^{n}I_{12}(h) \Big\|_{H^{-\beta}}\bigg] \lesssim \delta \nu^{\frac{1}{2}}  \alpha^{\frac{1}{2}} \, T C^{1/2}_{\theta,1+\gamma,2} \,   \|\xi_{0}\|^2_{L^{2}} \big(1+\|\xi_{0}\|_{L^{2}}\big)  .
		\end{equation}
		Lemma \ref{remaing term} follows by combining \eqref{I11} and \eqref{I12}.
	\end{proof}

	\subsection{The terms $I_a$ and $I_b$} \label{subsec-a-b}

Recall the definitions of $I_a$ and $I_b$ at the beginning of Section \ref{subsec-decomp}; since their treatments are similar to those involving $I_{31}(h)$ and $I_1(h)+\delta(u_{h\delta}\cdot \nabla \xi_{h\delta})$, respectively, we omit them here to save space.

	\begin{lemma}\label{Ia and Ib}
		Let $T\geq 1$, $\beta>3$ and $\gamma\in (0,1)$, then the following estimates hold:
		$$\aligned
		\E \bigg[\sup_{1 \leq m<n \leq T/\delta-1} \|I_a\|_{H^{-\beta}}\bigg] &\lesssim \delta^{\gamma} \alpha^{\frac{\gamma}{2}} (\kappa\nu^{\frac{\gamma}{2}} +\nu^{1+\frac{\gamma}{2}} ) \, T C_{\theta,1+\gamma,2}^{\gamma/2} \|\xi_0\|_{L^2} \big(1+\|\xi_0\|_{L^2}\big)^{\gamma},\\
		\E \bigg[\sup_{1 \leq m<n \leq T/\delta-1} \|I_b\|_{H^{-\beta}}\bigg] &\lesssim \delta \nu^{\frac{1}{2}}  \alpha^{\frac{1}{2}} \, T C^{1/2}_{\theta,1+\gamma,2} \,   \|\xi_{0}\|^2_{L^{2}} \big(1+\|\xi_{0}\|_{L^{2}}\big).
		\endaligned $$
	\end{lemma}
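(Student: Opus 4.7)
The proof of Lemma \ref{Ia and Ib} is a direct adaptation of the arguments already carried out for $I_{31}(h)$ (in Lemma \ref{prop3 1st lemma}) and for $I_{11}(h) + I_{12}(h)$ (in Lemma \ref{remaing term}); the only real difference is that here we work with a single integral indexed by $m,n$ rather than a sum over $h$, so passing the supremum inside is automatic once we observe that $\|I_a\|_{H^{-\beta}}$ and $\|I_b\|_{H^{-\beta}}$ are controlled by the same integrand on an enlarged interval. The plan is to prove each bound by a short direct computation that mimics the corresponding prior argument, and then invoke Lemma \ref{E H^-1} with $p=2$ to control the $H^{-1}$-increment.

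For $I_a$, I would first note $\sup_{m<n}\|I_a\|_{H^{-\beta}} \le (\kappa+\nu)\int_{\delta}^{T-\delta}\|\Delta(\xi_{[s]}-\xi_s)\|_{H^{-\beta}}\,ds$. Since $\beta>3>2+\gamma$, the operator $\Delta$ maps $H^{-\gamma}$ into $H^{-\beta}$ continuously, so $\|\Delta(\xi_{[s]}-\xi_s)\|_{H^{-\beta}}\lesssim \|\xi_{[s]}-\xi_s\|_{H^{-\gamma}}$. Interpolation between $H^{-1}$ and $L^2$ (Lemma \ref{interpolation}) together with the a priori bound \eqref{priori estimates 1} gives $\|\xi_{[s]}-\xi_s\|_{H^{-\gamma}}\lesssim \|\xi_0\|_{L^2}^{1-\gamma}\|\xi_{[s]}-\xi_s\|_{H^{-1}}^{\gamma}$. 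Taking expectation, applying Jensen, and then Lemma \ref{E H^-1} with $p=2$ (noting $|s-[s]|\le \delta$) yields
$$\E\,\|\xi_{[s]}-\xi_s\|_{H^{-\gamma}} \lesssim \delta^{\gamma}(\nu\alpha)^{\gamma/2} C_{\theta,1+\gamma,2}^{\gamma/2}\,\|\xi_0\|_{L^2}\bigl(1+\|\xi_0\|_{L^2}\bigr)^{\gamma}.$$
Integrating this over $[0,T]$ and multiplying by $\kappa+\nu$ produces exactly the stated bound for $I_a$.

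For $I_b$, I would decompose the integrand in the same way as in the proof of Lemma \ref{remaing term}, namely
$$u_s\cdot\nabla\xi_s - u_{[s]}\cdot\nabla\xi_{[s]} = (u_s-u_{[s]})\cdot\nabla\xi_{[s]} + u_s\cdot\nabla(\xi_s-\xi_{[s]}),$$
and estimate each piece in $H^{-\beta}$ by duality against a test function $\phi\in H^{\beta}\subset H^{2+\gamma}$. For the first piece one uses Sobolev embedding to get $|\langle(u_s-u_{[s]})\cdot\nabla\xi_{[s]},\phi\rangle|\lesssim \|u_s-u_{[s]}\|_{L^2}\|\xi_0\|_{L^2}\|\phi\|_{H^{2+\gamma}}\lesssim \|\xi_s-\xi_{[s]}\|_{H^{-1}}\|\xi_0\|_{L^2}\|\phi\|_{H^{2+\gamma}}$; for the second, the same argument as in \eqref{L2 decompose} (relying on Poincaré and $\mathrm{div}\,u_s=0$) gives $|\langle u_s\cdot\nabla(\xi_s-\xi_{[s]}),\phi\rangle|\lesssim \|\xi_0\|_{L^2}\|\phi\|_{H^{2+\gamma}}\|\xi_s-\xi_{[s]}\|_{H^{-1}}$. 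Hence both terms are bounded in $H^{-\beta}$ by $\|\xi_0\|_{L^2}\|\xi_s-\xi_{[s]}\|_{H^{-1}}$, so
$$\sup_{m<n}\|I_b\|_{H^{-\beta}} \lesssim \|\xi_0\|_{L^2}\int_0^T \|\xi_s-\xi_{[s]}\|_{H^{-1}}\,ds.$$
Taking expectation, applying Cauchy--Schwarz, and using Lemma \ref{E H^-1} with $p=2$ gives $(\E\|\xi_s-\xi_{[s]}\|_{H^{-1}}^2)^{1/2}\lesssim \delta(\nu\alpha)^{1/2}C_{\theta,1+\gamma,2}^{1/2}\|\xi_0\|_{L^2}(1+\|\xi_0\|_{L^2})$, and integrating over $[0,T]$ produces the claimed bound for $I_b$.

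There is no genuine obstacle here: once one sees that the $m,n$-supremum is harmless (the integrands are nonnegative in norm and the domain of integration is dominated by $[0,T]$), the rest is a recycling of the two earlier proofs. The only point requiring minor care is the Sobolev bookkeeping needed to absorb $\Delta$ into $H^{-\beta}$ and to ensure $H^{2+\gamma}\subset H^\beta$, both of which are comfortably covered by the hypothesis $\beta>3$, $\gamma\in(0,1)$.
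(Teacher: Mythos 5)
Your proof is correct and is exactly the argument the paper intends when it omits the proof of Lemma \ref{Ia and Ib}: you bound the supremum over $m<n$ by the integral over $[0,T]$ and then recycle the $I_{31}(h)$ computation (embedding $\Delta$ into $H^{-\beta}$, interpolating $H^{-\gamma}$ between $H^{-1}$ and $L^2$, and invoking Lemma \ref{E H^-1}) for $I_a$, and the $I_{11}(h)+I_{12}(h)$ decomposition with \eqref{L2 decompose} for $I_b$. The only blemish is the reversed inclusion in your last sentence, which should read $H^{\beta}\subset H^{2+\gamma}$ as you correctly stated earlier.
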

	
	\subsection{Proof of Proposition \ref{main proposition}}\label{subs-proof-Prop-3.3}

	Now we will combine the results of Lemmas \ref{prop3 1st lemma}-\ref{Ia and Ib} and prove
	\begin{equation} \nonumber
		\begin{split}
			&\quad \mathbb{E}\bigg[\sup_{1\leq m<n \leq T/\delta-1} \frac1{(|n-m|\delta)^\rho } \Big\|\xi_{n\delta}-\xi_{m\delta}-(\kappa+\nu)\int_{m\delta}^{n\delta} \Delta \xi_s \, ds+\int_{m\delta}^{n\delta} u_s\cdot \nabla \xi_s \, ds \Big\|_{H^{-\beta}} \bigg]\\
			&\lesssim T \|\xi_0\|_{L^2} \big(1+\|\xi_0\|_{L^{2}}\big)^2 \big(\nu^{1+\frac{\gamma}{2}}\alpha^{-\epsilon}  +\nu^{\frac{1}{2}}\|\theta\|_{\ell^{\infty}}\big).
		\end{split}
	\end{equation}
	Recalling the decompositions in \eqref{decomposition of fn-fm} and \eqref{decompose 2}, let us start with terms other than $I_{251}(h)$. Denote
	$$I(h):= I_{1}(h)+I_{21}(h)+I_{22}(h)+I_{231}(h)+I_{24}(h)+I_{252}(h)+I_{31}(h)+\delta ( u_{h\delta} \cdot \nabla  \xi_{h\delta} ),$$
	then we want to prove
	\begin{equation} \label{prf of prop3}
		\mathbb{E}\bigg[\sup_{1\leq m<n \leq T/\delta-1}  \frac1{(|n-m|\delta)^\rho }\Big\|\sum_{h=m}^{n}  I(h)+I_a+I_b \Big\|_{H^{-\beta}} \bigg] \lesssim \nu^{1+\frac{\gamma}{2}}\alpha^{-\epsilon} \, T  \|\xi_0\|_{L^2}\big(1+\|\xi_0\|_{L^{2}}\big)^2.
	\end{equation}
	
	As $\kappa<1$, then we can magnify it to 1 for convenience. Besides, observe that $\nu>1$ and the exponents of $\nu$ are all smaller than $1+\frac{\gamma}{2}$ in estimates in Sections 5.2--5.6, hence we keep $\nu^{1+\frac{\gamma}{2}}$ in the final result. Moreover, once $\theta\in \ell^2(\Z^d_0)$ is fixed, $C_{\theta,\tau,p}$ and $D_{\theta,\gamma}$ are finite constants which do not play a big role in our main results. As for the parts involving the $L^2$-norm of initial value $\xi_0$, they are all dominated by $\|\xi_0\|_{L^2}\big(1+\|\xi_0\|_{L^{2}}\big)^2$, so we mainly focus on parameters $\delta$ and $\alpha$ in the proof.
	
	To make the term $I_{22}(h)$ satisfy \eqref{prf of prop3}, for fixed $\theta\in \ell^2(\Z^2_0)$, we need to make a restriction on $\delta$ and $\alpha$ as follows:
	\begin{equation}\label{1st restriction}
		\delta^{1+\gamma-\rho} \alpha^{1+\frac{\gamma}{2}+\epsilon} \lesssim 1.
	\end{equation}
	Moreover, to apply Lemmas \ref{E H^-1} and \ref{E H2-g} in the proofs of Lemmas \ref{prop3 1st lemma}--\ref{Ia and Ib}, the following conditions are necessary:
	\begin{equation}\label{2st restriction}
		\delta^4 \alpha^3 \lesssim 1 , \quad \delta \alpha \gtrsim1.
	\end{equation}
	
	In order to verify that the above two conditions are consistent with each other, we present the specific choice of parameters. Indeed, for fixed $\gamma \in (0,\frac{1}{3})$, there exist sufficiently small $\rho,\, \epsilon>0$ such that
	$$\epsilon +\rho < \frac14 (1 -\rho -2\epsilon ) \gamma \quad \Leftrightarrow\quad (1+\rho)\Big(1+ \frac\gamma2 +\epsilon \Big) < (1-\epsilon)(1+\gamma-\rho); $$
	therefore, for $\alpha$ big enough, we can choose $\delta$ satisfying
	\begin{equation}\label{delta restriction}
		\alpha^{-\frac{1-\epsilon}{1+\rho}} \lesssim \delta \lesssim \alpha^{-\frac{1+\gamma/2+\epsilon}{1+\gamma-\rho}}.
	\end{equation}
	With this choice, it is easy to see that \eqref{1st restriction} is verified. Next, we deduce from $\alpha^{-1} \le  \alpha^{-\frac{1-\epsilon}{1+\rho}} \lesssim \delta$ that the second condition of \eqref{2st restriction} also holds. Finally, one has $\delta^4 \alpha^3 \lesssim \alpha^{\frac{-1+\gamma-4\epsilon-3\rho}{1+\gamma-\rho}} \leq 1$ as the exponent is negative, which yields the first inequality of \eqref{2st restriction}.
	
	Based on the previous discussions, we choose several terms in the decomposition of $I(h)$ as examples to show that \eqref{prf of prop3} holds under the condition \eqref{delta restriction}, which implies \eqref{1st restriction} and \eqref{2st restriction}.
	
	(i) For the term $I_{24}(h)$, we only need to prove $\delta^{\frac{1+\gamma}{2}-\rho} \alpha^{\frac{1}{2}+\epsilon}\lesssim 1.$ By \eqref{1st restriction} and \eqref{2st restriction}, we immediately deduce
	$$\delta^{\frac{1+\gamma}{2}-\rho} \alpha^{\frac{1}{2}+\epsilon}=\big(\delta^{1+\gamma-\rho} \alpha^{1+\frac{\gamma}{2}+\epsilon} \big) \big(\delta\alpha\big)^{-\frac{1+\gamma}{2}} \lesssim 1;$$
	
	(ii) For the term $I_{231}(h)$, we want to verify that $\delta^{-1-\rho} \alpha^{-1+\epsilon} \lesssim 1$. Recalling our choice of $\delta$ in \eqref{delta restriction}, we can further get
	$$ \delta^{-1-\rho} \alpha^{-1+\epsilon} \lesssim \alpha^{1-\epsilon} \alpha^{-1+\epsilon} =1. $$
	
	(iii) For the term $I_{252}(h)$, we only discuss the latter part here, that is:
	$$\delta^{-\rho} \alpha^{-\frac{1}{2}+\epsilon} \log^{\frac{1}{2}}(1+\alpha T)\lesssim 1.$$
	Note that $\log(1+\alpha T)$ is negligible with respect to $\alpha^{\frac{1}{2}}$ as $\alpha$ is sufficiently large, then by \eqref{delta restriction}, the above inequality holds for small $\epsilon$ and $\rho$.
	
	We can use similar method to prove that the remaining terms of Lemmas \ref{prop3 1st lemma}--\ref{Ia and Ib} satisfy \eqref{prf of prop3}.
	Finally we consider the term $I_{251}(h)$ separately, noticing that $T^{\frac{1}{2}-\rho} \leq T$ as $\rho \in (0,\frac{1}{4})$, then we can easily get the desired conclusion by combining \eqref{I252 final} with \eqref{prf of prop3}.

	\bigskip
	
	\noindent\textbf{Acknowledgement.} The second author is grateful to Umberto Pappalettera for useful discussions on the proof of Proposition \ref{main proposition}. He thanks also the financial supports of the National Key R\&D Program of China (No. 2020YFA0712700), the National Natural Science Foundation of China (Nos. 11931004, 12090010, 12090014), and the Youth Innovation Promotion Association, CAS (Y2021002).

\end{document}